\documentclass[11pt]{article}
\usepackage[top=1in, bottom=1in, left=1.25in, right=1.25in]{geometry}

\usepackage{amsmath,amssymb,amsthm,xcolor,enumerate,bbm,array}
\usepackage[unicode,breaklinks=true,colorlinks=true]{hyperref}
\usepackage{xcolor}
\usepackage{theoremref}
\usepackage{mathrsfs}
\usepackage[]{authblk}
\usepackage{comment}
\usepackage{cancel}
\usepackage{graphicx}
\usepackage{float}
\usepackage{changepage}
\usepackage{array,multirow,makecell}
\setcellgapes{1pt}
\makegapedcells
\newcolumntype{R}[1]{>{\raggedleft\arraybackslash }p{#1}}
\newcolumntype{L}[1]{>{\raggedright\arraybackslash }p{#1}}
\newcolumntype{C}[1]{>{\centering\arraybackslash }p{#1}}

\newcommand{\crm}[1]{{\color{magenta} #1}} 
\renewcommand{\crm}[1]{#1}

\numberwithin{equation}{section}
\newtheorem{thm}{Theorem}[section]

\newtheorem{lem}[thm]{Lemma}
\newtheorem{prop}[thm]{Proposition}
\newtheorem{defn}[thm]{Definition}

\theoremstyle{remark}
\newtheorem{remark}{Remark}[section]
\theoremstyle{definition}

\newcommand{\R}{\mathbb{R}}

\renewcommand{\div}{\mathop{\rm div}\nolimits}
\newcommand{\curl} {\mathop{\rm curl}\nolimits}

\newcommand{\EQS}[1]{\begin{equation}\begin{split} #1 \end{split}\end{equation}}
\newcommand{\EQs}[1]{\begin{equation*}\begin{split} #1 \end{split}\end{equation*}}

\allowdisplaybreaks
\begin{document}

\title
{Global existence and uniqueness of  weak solutions  for the MHD equations  with large $L^{3}$-initial values}
\author[1]{\rm Baishun Lai}
\author[2]{\rm Ge Tang}
\author[3]{\rm Ziying Xu}
\affil[1]{\footnotesize MOE-LCSM, School of Mathematics and Statistics, Hunan Normal University, Changsha, Hunan 410081, P. R. China}
\affil[2,3]{\footnotesize School of Mathematics and Statistics, Hunan Normal University, Changsha, Hunan 410081, P. R. China}
\date{}
\maketitle

\begin{abstract}
This paper is concerned with the weak solution theory for the MHD system with large $L^3$-initial data.
Due to the fact that the natural boundary condition on the magnetic field $H$ is the \emph{slip} boundary condition, the Leray-Schauder fixed-point theorem, which have used to investigate the weak solution theory of the Navier-Stokes system, becomes invalid. To address such difficulty, we will invoke  the Leray's approximation technique and the perturbation theory to seek a global weak solution to the Cauchy problem for MHD equations with large $L^3$-initial data. Our strategy provides  a simple alternative (self-contained)  proof of  weak $L^3$-solution theory of incompressible Navier-Stokes system. Moreover, this weak solution is unique under some restrictions.

\medskip

\emph{Key words}: Global weak $L^3$-solutions; Leray's approximation; perturbation theory
\end{abstract}

\setcounter{equation}{0}
 \setcounter{equation}{0}
\section{Introduction}
This paper is concerned with the existence and uniqueness of weak solutions for the three-dimensional incompressible Magnetohydrodynamics (MHD for short) equations in $\mathbb R^3\times(0,+\infty)$ 
		\begin{equation}\label{EQ-tlh}
		\left\{
		\begin{array}{l}
			\begin{aligned}
				&\partial_{t}v-\Delta v+v\cdot\nabla v+\nabla\Pi=H\cdot\nabla H,\\
				&\partial_{t}H-\Delta H+v\cdot\nabla H=H\cdot\nabla v,\\
				&\div v= \div H=0,
			\end{aligned}
		\end{array} \right.
	\end{equation}
 complemented with the initial condition	
\begin{equation}\label{initial value}
		v(x,0)=v_0,\quad H(x,0)=h_0 \quad\text{in} \quad\mathbb R^3.
	\end{equation}
Here, $v$ is a velocity field, $H$ is a
magnetic field, $\Pi$ stands for the total pressure of the fluid. Magnetohydrodynamics  is a branch of continuum mechanics that studies the flow of electrically conducting fluids under the influence of magnetic fields. It holds a core position in plasma physics, astrophysics, and geophysics. Conducting fluids not only exhibit fluid characteristics but also carry charges under the influence of magnetic fields, making electromagnetic effects and fluid dynamics closely intertwined in their motion patterns. The MHD equations serve as the fundamental tool for describing this interaction by combining fluid dynamics equations with Maxwell's equations to trace the changes in fluid motion and the magnetic fields they produce in ionized media.

\crm{Notice that}, in the case $H\equiv0$, \eqref{EQ-tlh} reduces to the incompressible Navier-Stokes equations in $\R^3\times (0,\infty)$
\begin{equation}\label{ns}
	\left\{
	\begin{array}{l}
		\begin{aligned}
			&\partial_{t}v-\Delta v+v\cdot\nabla v+\nabla p=0, \\
			&\div v=0,\\
            & v(x,0)=v_0, \quad \quad  x\in\R^3
		\end{aligned}
	\end{array} \right.
\end{equation}
which has a simple form but rich mathematical structure. In the groundbreaking work \cite{leray1934}, Leray  constructed some global weak solutions to the system \eqref{ns} for $v_0\in L^2(\R^3)$, called nowadays Leray-Hopf weak solutions. However, whether or not  Leray-Hopf weak solutions remain smooth for all times is one of the most famous open problems in mathematics. In the literature, there is a large body of classical results on the regularity theory of weak solutions, we refer interested readers to see \cite{Galdi,Lemarie-Rieusset,leray1934,oseen1911formules,3-D ns equations,RT} and reference therein. Turning to MHD system \eqref{EQ-tlh}-\eqref{initial value}, its  existence and uniqueness theory is closely related to that of the Navier-Stokes equations \eqref{ns}. The earliest work on this direction (existence and uniqueness theory of solution) can be traced back to the classic work of Duvaut and Lions \cite{Duvaut1972} in 1972, where they obtained global weak solutions and local strong solutions for equations \eqref{EQ-tlh}-\eqref{initial value} under certain initial and boundary conditions by using Faedo-Galerkin approximation techniques. Subsequently, Sermange and Temam \cite{Temam} extended these results to the Cauchy problem via the regularity theory of the Stokes operator and energy methods. By now, there have been many diverse and interesting results of MHD system \eqref{EQ-tlh}-\eqref{initial value} on this direction (see, e.g \cite{cao2011,Chemin2016,Fer2021,suen2020}), but we are not  pursuing such directions in this paper.

Recently, more and more mathematicians have become aware of the importance of the theory of the weak solution  in the study of fluid equations, where the initial value belongs to a more general class that includes $L^3(\mathbb{R}^3), L^2_{uloc}(\mathbb{R}^3)$, etc, rather than in energy class like $L^2(\mathbb{R}^3)$. For example,  the theory of local energy weak solutions of \eqref{ns} for $v_0\in L^2_{uloc}(\mathbb{R}^3)$, which was first introduced by Lemari\'{e}-Rieusset \cite{Lemarie}, plays a key role in the construction large forward self-similar solutions of \eqref{ns}, see \cite{jia2014} for more details. To the best of our  knowledge, the earliest work in this direction can be traced back to Calder\'{o}n's work \cite{Calderon}, where the author considered  the global weak solution theory of system \eqref{ns} with initial data $v_0\in L^p \,(2<p<3)$. Due to Calder\'{o}n's work, the ``gap" between the Hopf-Leray theory ($p=2$) and that of Kato ($p\geq 3$) is bridged.  Very recently,  Seregin and  \v{S}ver\'{a}k in \cite{seregin2016} considered the global weak solution theory for  large initial data in $L^3(\mathbb R^3)$. Precisely, they used the Leray-Schauder degree theory to construct a large weak solution, called as the large global $L^3$- weak solution (see below for the precise definition), and demonstrated that this weak solution is uniqueness under some restrictions. Besides,  another key ingredient in \cite{seregin2016} is the decomposition technique to the initial data (see below for the precise statement), which has been used in Calder\'{o}n's work. \crm{Besides scaling invariance of the Navier-Stokes system \eqref{ns} on $L^3$, the main reason one develops the $L^3$- weak solution theory is as follows: The global weak $L^3$ is stable in the following sense:  given a sequence of global weak $L^3$ solutions with initial data $u_0^{(n)}\rightharpoonup u_0$ in $L^3$, there exists a subsequence converging in the sense of distributions to a global
weak $L^3$ solution with initial data $u_0$. This property plays a distinguished role  in the following two aspects : (i) The regularity theory of the Navier-Stokes equations \eqref{ns} or MHD system \eqref{EQ-tlh}. For example,  such sequences of solutions arise naturally when zooming in on a potential singularity of the  Navier-Stokes equations \eqref{ns}, as in the paper \cite{L. Escauriaza} by Escauriaza, Seregin, and \v{S}ver\'{a}k.
(ii) This property enables us to construct a  global  weak $L^{3,\infty}$ solution with a large $L^{3,\infty}$ data, for details see \cite{Barker2018}. In many cases it is desirable to have a good theory of the large global  weak $L^{3,\infty}$ solution for initial data $u_0\in L^{3,\infty}$. First, this solution contains scale-invariant solutions (also called as forward self-similar solutions) investigated by Jia and \v{S}ver\'{a}k in \cite{jia2014}. Secondly, a local-in-time mild solution, so far, is not known  to exist under the framework $L^{3,\infty}$, unless the initial value $\|u_0\|_{L^{3,\infty}}$ is sufficiently small. It appears that the Cauchy problem (1.3) is ill-posed with large $L^{3,\infty}$ initial date, see for example \cite{jia2015}. Thirdly, it also provides  a simple alternative approach to Lemari\'{e}-Rieusset's theory \cite{Lemarie} of local energy solutions in the case $u_0\in L^{3,\infty}$.
}

To illuminate the motivations of this paper in detail, we now sketch Seregin-\v{S}ver\'{a}k's strategy as follows. Since the initial datum $v_0\in L^{3}(\R^3)$ is not in the  energy class, one can not directly construct a large weak solution  of Navier-Stokes system \eqref{ns}. To derive the desired result,  Seregin and \v{S}ver\'{a}k in \cite{seregin2016} adopted the ``initial value homogeneous" argument to fix this difficulty. Precisely, let $v = e^{t\Delta}v_0 + u$,  then  seeking a weak solution is equivalent to find a weak solution of
\EQS{\label{ns-01}
\left.\begin{array}{ll}
\partial_tu-\Delta u+\nabla p+v\cdot\nabla v=0\\
\div u=0\\
\end{array}\right\}\,\, \mbox{in}\quad \R^3\times(0,+\infty),
}
supplemented with an initial condition
\EQS{\label{ns-02}
u(x,0)=0, \quad\quad \mbox{in}\quad \R^3.
}
The main tool used in \cite{seregin2016} is \crm{the {\em Leray-Schauder fixed-point theorem}} proposed by Schauder in 1927 and developed by Leray in 1933. Precisely,  exploiting the Leray-Schauder principle to find a solution, one need to reduce the study of the problem \eqref{ns-01}-\eqref{ns-02} into that of the corresponding integral system:
\EQS{
u=\mathcal{A}(e^{t\Delta}v_0 + u)
}
To solve this integral system, it suffices to verify  $\mathcal{A}$ satisfies all the requirements of the Leray-Schauder principle in some selected Banach space $X$ (Here the Banach space $X$ is chosen as $L^2(0,T; L_{\sigma}^2(\R^3))$:

(i)\; $\mathcal{A}: X\buildrel{}\over{-\!\!\!\!\rightarrow}X$ is a continuous and compact operator.

(ii)\; There exists a constant $C$ such that, for every $\lambda\in[0,1],$ $$u=\mathcal{A}(e^{t\Delta}v_0 + u)\;\Longrightarrow\; \|u\|_{X}\leq C.$$

In order to derive the  compactness of  $\mathcal{A}$, the authors in  \cite{seregin2016} had to  consider the system \eqref{ns-01}-\eqref{ns-02} in $B(R)\times(0,T)$\footnote{Throughout this paper, we denote $B(x,r):=\{y\in\R^3: |y-x|<r$\}, and  write $B(x,r)$ as $B(r)$ for simplicity.} under zero boundary condition, where the compactness of  $\mathcal{A}$ can be obtained by the  fact that the embedding $H^{1}(B(R))\hookrightarrow L^2(B(R))$ is compact. Besides, the classical energy technique ensures that the requirement (ii), i.e., the so called a priori estimate,  is valid. Once the requirements (i) and (ii) are verified, one can construct a solution, say $u$, of the system \eqref{ns-01}-\eqref{ns-02} in $B(R)\times(0,T)$ in the Banach space $X$ by using the Leray-Schauder fixed-point theorem. Moreover, the solution $u$ fulfills
$$
\|u\|_{L^{\infty}(0,T; L^2(B(R)))}+\|\nabla u\|_{L^{2}(0,T; L^2(B(R)))}\leq C(\|v_0\|_{L^{3}(\R^3)})\sqrt{T}.
$$
Let $R\to\infty$, one eventually derives a weak solution of the system \eqref{ns-01}-\eqref{ns-02} in $\R^3\times(0,T)$ for any $T>0$.

Now, turning to the  MHD equations \eqref{EQ-tlh}-\eqref{initial value},  it is remarked that this system in $B(R)\times(0,T)$ coupled with the \emph{no-slip} boundary condition
$$
v(x,t)=H(x,t)=0 \quad\quad \mbox{on}\quad \partial B(R)\times(0,T)
$$
is overdetermined. Usually, in the context of the MHD equations in bounded domain, the natural boundary condition on the magnetic field $H$ is the \emph{slip} boundary condition
 \EQs{
H(x',0,t)\cdot\nu=0,\quad
\curl H(x,t)\times \nu=0\quad \text{ on }\ \partial B(R)\times(0,T),
}
where $\nu$ is the outer normal on $\partial B(R)$.  Therefore, it seems that one, as in Navier-Stokes system, cannot use Leray-Schauder fixed-point theorem  to construct a solution to the MHD equations. In the present paper, To address this difficulty, we will invoke two classical techniques: the Leray approximation technique and the perturbation theory,  to seek a large global weak $L^3$-solution (see below for the precise definition) of \eqref{EQ-tlh}-\eqref{initial value} in $\R^3\times (0,T)$. The key ingredient of our argument is that it allows for a direct investigation of the \eqref{EQ-tlh}-\eqref{initial value} in $\R^3\times (0,T)$, avoiding the need to consider in $B(R)\times(0,T)$. \crm{Besides, it is worth noting  that our strategy provides  a simple alternative (self-contained)  proof of  weak $L^3$-solution theory of incompressible Navier-Stokes system.}
 Let us now state our first theorem.

\begin{thm}\label{thm 1.1}
Let $(v_0,h_0)\in L_{\sigma}^3(\mathbb R^3)$, the Cauchy problem \eqref{EQ-tlh}-\eqref{initial value} admits at least one global weak $L^3$-solution in the sense of Definition \ref{def1}.
\end{thm}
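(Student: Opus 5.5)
We will follow the Seregin--Šverák scheme but never leave $\R^3$. Compactness will come from Leray mollification and local compactness, not from a bounded domain. Definition \ref{def1} is not reproduced above. We take it to require $v=e^{t\Delta}v_0+u$, $H=e^{t\Delta}h_0+b$ with $u,b\in L^\infty(0,T;L^2)\cap L^2(0,T;\dot H^1)$ for every $T$, together with an energy inequality for $(u,b)$, and we aim for that.

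\emph{Step 1: splitting the data.} Following Calder\'on, for any $\varepsilon>0$ we split $v_0=v_0^1+v_0^2$ and $h_0=h_0^1+h_0^2$, where $v_0^1,h_0^1\in L^3_\sigma$ have $L^3$ norm below $\varepsilon$ and $v_0^2,h_0^2\in L^2_\sigma\cap L^3_\sigma$. One way to do this is to cut off in Fourier space, or to truncate at a large height and then Leray-project. For $\varepsilon$ small, the small-data Kato theory in $L^3$ (the perturbation step) gives a global mild solution $(V,B)$ of \eqref{EQ-tlh} with data $(v_0^1,h_0^1)$. This solution has $\|V,B\|_{L^5(\R^3\times(0,\infty))}$ and $\sup_t t^{1/2}\|V,B\|_{L^\infty}$ both bounded by $C\varepsilon$, and it is smooth for $t>0$. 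We then look for $v=V+u$, $H=B+b$. The remainder $(u,b)$ satisfies a perturbed MHD system with $L^2$ data $(v_0^2,h_0^2)$. The linear-in-$(u,b)$ terms are $V\cdot\nabla u+u\cdot\nabla V-B\cdot\nabla b-b\cdot\nabla B$ in the velocity equation, with the analogous terms in the magnetic equation.

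\emph{Step 2: Leray approximation on $\R^3$.} We regularize the transport coefficients, replacing $u\cdot\nabla u$ by $(\rho_\delta*u)\cdot\nabla u$ and the magnetic terms similarly, always with the mollified field as the advecting field. This keeps the cancellations $\int(\rho_\delta*u)\cdot\nabla u\cdot u=0$ and $\int (b\cdot\nabla b)\cdot u+(b\cdot\nabla u)\cdot b=0$, where in the second identity $b$ enters mollified in matching positions. For fixed $\delta$, the regularized system is solved globally in the energy space by a contraction on short time intervals. The length of each interval depends only on $\delta$ and on the $L^2$ norm, which the energy estimate controls, so the solution extends to all times.

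\emph{Step 3: uniform energy estimate.} This is the main obstacle. Testing with $(u,b)$ leaves the cross terms $\int (u\cdot\nabla V)\cdot u$ and similar ones. We integrate by parts onto $u$ and bound them by
\[
\|V,B\|_{L^5}\,\|u,b\|_{L^{10/3}}\,\|\nabla u,\nabla b\|_{L^2}\le \|V,B\|_{L^5}\,\|u,b\|_2^{2/5}\,\|\nabla(u,b)\|_2^{8/5},
\]
then apply Young's inequality and Gronwall with the weight $\exp\bigl(C\int\|V,B\|_5^5\,dt\bigr)$, which is finite. Alternatively, the smallness of $\|V,B\|_{L^\infty_tL^3_x}\le C\varepsilon$ lets us absorb these terms directly via Sobolev. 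The forcing terms $V\cdot\nabla V-B\cdot\nabla B$ are bounded in $L^2_tH^{-1}$ by $\|V,B\|_{L^4}^2$, which is finite by interpolation. The result is a bound $\sup_t\|u,b\|_2^2+\int_0^T\|\nabla(u,b)\|_2^2\le C(T,v_0,h_0)$, uniform in $\delta$.

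\emph{Step 4: passing to the limit.} From the equation we get $\partial_t(u,b)$ bounded in $L^{4/3}_tH^{-1}_{loc}$. Aubin--Lions on each ball, plus a diagonal argument, then gives strong $L^2_{loc}$ convergence, and the nonlinear terms converge in distributions. We recover the pressure from the Riesz-transform representation, which converges locally. The energy inequality follows by lower semicontinuity, and weak continuity at $t=0$ is inherited from the uniform bounds. Finally, $v-e^{t\Delta}v_0=u+(V-e^{t\Delta}v_0^1)-e^{t\Delta}v_0^2$, and each of these pieces lies in the energy class, with the middle one there by a standard Duhamel estimate. Hence the solution satisfies Definition \ref{def1}.
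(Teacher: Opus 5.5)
The gap is that you never establish items (c) and (d) of Definition \ref{def1}, and both are part of what Theorem \ref{thm 1.1} asserts. Item (c) is the global energy inequality \eqref{global energy inequality} for the specific pair $(v_2,H_2)=(v-e^{t\Delta}v_0,\,H-e^{t\Delta}h_0)$, with right-hand side built from $(v_1,H_1)=e^{t\Delta}(v_0,h_0)$. Item (d) is the local energy inequality \eqref{local energy estimate} for $(v_2,H_2)$, including the pressure $\Pi$. Your Step 4 gives an energy inequality only for $(u,b)=(v-V,H-B)$, and no local inequality at all.

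Your closing observation that $W:=V-e^{t\Delta}v_0$ lies in the energy class verifies (a), not (c). Energy inequalities are not additive, so passing from $u$ to $v_2=u+W$ needs an identity for the cross term $\int_{\R^3}u\cdot W\,dx$ and its magnetic analogue. That is a separate argument. It is most naturally done at the regularized level, where $\partial_t u^\delta\in L^2(0,T;H^{-1})$, followed by a limit on all of $\R^3$ using, e.g., $W\in C([0,T];L^3)$ for tightness.

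Item (d) needs three further things, none of which appear:
\begin{itemize}
\item testing the regularized system with $(u^\delta\phi,b^\delta\phi)$;
\item strong $L^{3/2}_{\rm loc}$ convergence of the approximate pressures, which is the delicate part of the paper's Step (ii), including the far-field kernel estimate;
\item again, a transfer from $(u,b)$ to $(v_2,H_2)$.
\end{itemize}
So the final ``Hence the solution satisfies Definition \ref{def1}'' is not justified.

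The simplest repair is to drop the splitting, which your own Step 3 already allows. The Gronwall weight $\exp\bigl(C\int_0^T\|(V,B)\|_{L^5}^5\,dt\bigr)$ needs only $(V,B)\in L^5(\R^3\times(0,\infty))$, not smallness. The caloric extension satisfies $\|e^{t\Delta}(v_0,h_0)\|_{L^5(\R^3\times(0,\infty))}\le C\|(v_0,h_0)\|_{L^3}$ for arbitrarily large data (Remark \ref{remark}).

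That is the paper's route. It regularizes the system \eqref{smooth equation} for $(v_2,H_2)$ directly around $(v_1,H_1)$. There the forcing $v_1\cdot\nabla v_1-H_1\cdot\nabla H_1$ is really present and is bounded in $L^2_tH^{-1}$ by $\|(v_1,H_1)\|_{L^4(\R^3\times(0,T))}^2$. In your setting $(V,B)$ solves MHD exactly, so the forcing you estimate in Step 3 is actually zero. The paper proves (d) for $(v_2,H_2)$ by passing to the limit in the local energy equality of the regularized problem. It then obtains (c) from (d) by sending the spatial cutoff $\varphi_R\to1$, using the pressure decomposition of Proposition \ref{pressure estimate}, and the time cutoff to $\chi_{(0,t)}$.

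What your splitting buys is a Gronwall-free bound for $(u,b)$, uniform in time. That is not needed here, and it costs you small-data Kato theory for MHD plus the conversion described above.

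There are also two smaller slips in Step 1. A Fourier cutoff does not produce an $L^2$ piece. The height truncation must be at a \emph{small} height $\eta$, since $\|v_0\chi_{\{|v_0|\le\eta\}}\|_{L^3}\to0$ and $\|v_0\chi_{\{|v_0|>\eta\}}\|_{L^2}^2\le\eta^{-1}\|v_0\|_{L^3}^3$. The part below a large height need not be in $L^2$.
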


In fact, the global weak $L^3$-solution constructed in the Theorem \ref{thm 1.1} is unique under some restrictions

\begin{thm}\label{thm 1.2}
Let $(v,H)$ and $(\tilde v,\tilde H)$ be two global weak $L^3$-solutions to the Cauchy problem \eqref{EQ-tlh}-\eqref{initial value} corresponding to the initial data $(v_0,h_0)\in L_{\sigma}^3(\mathbb R^3)$, and $(v,H)\in L^\infty(0,T;L^3(\mathbb R^3))$. \crm{If  there exist two absolute constants $\mu_1,~\mu_2>0$} such that for some number $0<T_1<T$,
\begin{equation} \label{miu}
\|v-v_0\|_{L^\infty(0,T_1;L^3(\mathbb R^3))}\leq\crm{\mu_1},\quad
\|H-h_0\|_{L^\infty(0,T_1;L^3(\mathbb R^3))}\leq\crm{\mu_2},
\end{equation}
then $(v,H)=(\tilde v,\tilde H)$ in $\mathbb R^3\times(0,T).$
\end{thm}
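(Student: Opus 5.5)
Following Seregin--\v{S}ver\'{a}k, we compare the two solutions through their difference and close a Gronwall-type energy estimate on a short interval $(0,T_1)$, then propagate uniqueness to $(0,T)$.

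\textbf{Step 1: the difference system.} Write $v=e^{t\Delta}v_0+u$, $\tilde v=e^{t\Delta}v_0+\tilde u$, $H=e^{t\Delta}h_0+b$, $\tilde H=e^{t\Delta}h_0+\tilde b$. By Definition \ref{def1}, $u,\tilde u,b,\tilde b\in L^\infty(0,T;L^2)\cap L^2(0,T;\dot H^1)$ and satisfy energy inequalities. Set $w=\tilde v-v=\tilde u-u$ and $g=\tilde H-H=\tilde b-b$. Both lie in the energy class, vanish at $t=0$ in $L^2$, and solve
\begin{equation*}
\partial_t w-\Delta w+\nabla q=-(w\cdot\nabla \tilde v+v\cdot\nabla w)+g\cdot\nabla\tilde H+H\cdot\nabla g,
\end{equation*}
with the analogous equation for $g$.

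\textbf{Step 2: a justified energy identity for the difference.} We cannot test the $w$-equation with $w$ directly. Instead, as in \cite{seregin2016}, we combine the energy inequality for $(\tilde u,\tilde b)$, the energy equality obtained by testing the $(u,b)$-equations with $(\tilde u,\tilde b)$ and vice versa, and the energy equality for $(u,b)$. The latter should be available because $(v,H)\in L^\infty(0,T;L^3)$ makes the trilinear terms integrable: $|\int (a\cdot\nabla c)\cdot d|\le \|a\|_{L^3}\|\nabla c\|_{L^2}\|d\|_{L^6}$. Justifying these cross-testings with only weak regularity of $(\tilde u,\tilde b)$ is the delicate point. We would mollify in space and time, and use $v,H\in L^\infty L^3$ together with $\tilde u,\tilde b\in L^2\dot H^1\subset L^2L^6$ to pass to the limit. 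The outcome is
\begin{equation*}
\tfrac12\|(w,g)(t)\|_2^2+\int_0^t\|\nabla(w,g)\|_2^2\le \int_0^t\!\!\int \big(w\cdot\nabla w\cdot v+ g\cdot\nabla g\cdot v - w\cdot\nabla g\cdot H - g\cdot\nabla w\cdot H\big)\,dx\,ds,
\end{equation*}
or the analogous form. The terms containing two tilde factors cancel by the divergence-free structure, and the coupling terms $H\cdot\nabla g\cdot w+H\cdot\nabla w\cdot g$ combine in the usual MHD way.

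\textbf{Step 3: closing with smallness.} In each trilinear term we split the $L^3$ factor as $v=v_0+(v-v_0)$ and $H=h_0+(H-h_0)$, and we further split $v_0=v_0^{(1)}+v_0^{(2)}$ with $\|v_0^{(1)}\|_{L^3}\le\epsilon$ and $v_0^{(2)}\in L^\infty$ (and likewise for $h_0$). The pieces $v-v_0$ and $H-h_0$ are bounded by $\mu_1,\mu_2$ via \eqref{miu}, and the piece $v_0^{(1)}$ by $\epsilon$. Each such term is controlled by $C(\mu_1+\mu_2+\epsilon)\|\nabla(w,g)\|_2^2$ using $L^3\cdot L^2\cdot L^6$ together with Sobolev, and it is absorbed into the left-hand side once $\mu_1,\mu_2,\epsilon$ are below an absolute constant. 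The bounded pieces give $\|v_0^{(2)}\|_\infty\|\nabla w\|_2\|w\|_2\le \tfrac14\|\nabla w\|_2^2+C\|(w,g)\|_2^2$. Gronwall with zero initial data then yields $w=g=0$ on $(0,T_1)$.

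\textbf{Step 4: extension to $(0,T)$.} For $t\ge T_1$ the solutions coincide at $T_1$, and $(v,H)\in L^\infty(T_1,T;L^3)$. Since $t\mapsto v(t)$ is weakly continuous into $L^3$ and strongly continuous for a.e.\ time from the structure of the solution, we can restart at times $t_0$ where $v(t_0)$ is close in $L^3$ to nearby values, obtaining uniqueness on short intervals. Alternatively, we use the Step 3 argument with $v_0$ replaced by $v(t_0)$ on an interval where $\|v-v(t_0)\|_{L^3}$ is small, and continue by a covering argument.

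\textbf{Main obstacle.} We expect Step 2, the rigorous derivation of the energy inequality for the difference when $(\tilde v,\tilde H)$ is merely a weak $L^3$-solution, to be the main difficulty. The concern is showing that the cross terms converge under mollification. The extension in Step 4 may also need the extra time-continuity in $L^3$.
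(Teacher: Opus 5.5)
\textbf{Steps 1--3.} These are sound and match the paper's argument on the initial layer. The difference inequality comes from combining the energy equality for $(v_2,H_2)$, the two cross-testings, and the energy inequality for $(\tilde v_2,\tilde H_2)$, and the resulting trilinear terms involve only $(v,H)$. Your way of closing is a valid variant of the paper's: you truncate $v_0$ and use $\|v-v_0\|_{L^3}\le\mu_1$ directly. The paper instead writes $v=v_1+v_2$, puts $\|v_1\|_{L^5}^5$ into the Gronwall weight, and absorbs $v_2$ using $\|v_2\|_{L^\infty(0,\delta;L^3)}\le 2\mu_1$ for small $\delta$ (since $e^{t\Delta}v_0\to v_0$ in $L^3$). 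Your Step 2 is also defensible as sketched. Indeed, $v\in L^\infty L^3$ gives $v\otimes v\in L^2(0,T;L^2)$, and $|\tilde v_2|\,|\nabla\tilde v_2|\,|v_2|\in L^1$ because $v_2=v-v_1\in L^\infty L^3$.

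\textbf{The gap is Step 4.} Beyond $T_1$ there is no smallness. The hypothesis $(v,H)\in L^\infty(0,T;L^3)$ by itself gives neither strong continuity of $t\mapsto v(t)$ in $L^3$ nor equi-integrability of $|v(t)|^3$ uniformly in $t$. So nothing guarantees the following: at the first time $s^*$ after which $w,g$ might be nonzero, there is an interval $(s^*,s^*+\tau)$ on which $v,H$ split into a part small in $L^\infty L^3$ plus a bounded part. Your claim of strong continuity for a.e.\ time is unproved. It would not suffice anyway, because the continuation must restart at an arbitrary $s^*$.

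This is exactly the endpoint obstruction for weak--strong uniqueness in $L^\infty L^3$, and it is why \eqref{miu} is imposed at $t=0$. You flag the issue but do not supply the missing ingredient, which is regularity for positive times. The paper invokes Lemma \ref{lem1-13}, the MHD analogue of Escauriaza--Seregin--\v{S}ver\'ak, to get $(v,H)\in L^\infty(\delta,T;L^\infty(\R^3))$ for every $\delta>0$. Interpolating with $L^\infty L^3$ then gives $(v,H)\in L^5(\delta,T;L^5)$.

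With that, no smallness is needed on $(\delta,T)$:
$\int|w|\,|\nabla w|\,|v|\,dx\le C\|v\|_{L^5}\|w\|_{L^2}^{2/5}\|\nabla w\|_{L^2}^{8/5}\le\frac14\|\nabla w\|_{L^2}^2+C\|v\|_{L^5}^5\|w\|_{L^2}^2$,
and likewise for the $H$-terms. Gronwall with weight $\|v\|_{L^5}^5+\|H\|_{L^5}^5\in L^1(\delta,T)$ then propagates $w=g=0$ from $(0,\delta)$ to $(0,T)$. The same regularity, namely $\partial_t v_2,\nabla^2 v_2\in L^2(\delta,T;L^2)$, is what the paper uses to justify the cross-testing away from $t=0$.
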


Finally, we conclude this section by presenting the organization of this paper:  Section 2 contains preliminaries which consist of some necessary notations and some useful lemmas.  Section 3 is devoted to the proof of the local in time as well as global in time existence results. Section 4 is devoted to the proof of Theorem \ref{thm 1.2}.
\medskip

\setcounter{equation}{0}
 \setcounter{equation}{0}
\section{Preliminary}
\subsection{Functional spaces and several useful lemmas}
Let $a, b$ be \crm{two extended real numbers}, $-\infty\leq a<b\leq\infty$,
and let $X$ be a Banach space. For given $p, 1\leq p\leq\infty$, $L^p(a, b; X)$ denotes the space of $L^p$-Lebesgue
integrable functions from $[a, b]$ into $X$, which is a Banach space with the norm
	\begin{equation*}
		\|v\|_{L^p(a, b ; X)}=\left\{\begin{array}{ll}
		\left(\int_a^b\|v\|_X^p \mathrm{~d} t\right)^{\frac{1}{p}} & \text {if}\quad 1 \leq p<\infty,\\
		\text { ess } \sup\limits_{t \in[a, b]}\|v\|_X & \text {if} \quad p=\infty.
		\end{array}\right.
	\end{equation*}
\crm{With this notation, we define $L^p(\R^3\times(a,b))\triangleq L^p(a,b;L^p(\R^3))$ with $p\in[1,\infty]$.} The Sobolev space $W^{k,p}(\Omega)$ is a space of functions on a domain $\Omega\subseteq\mathbb R^3$
such that the function and its weak derivatives up to order $k$ are in the $L^p(\Omega)$ space. The space is equipped with the norm:
	\begin{equation*}
		\|v\|_{W^{k, p}(\Omega)}=\left\{\begin{array}{ll}
		\left(\sum\limits_{\alpha \leq k} \int_{\Omega}\left|D^\alpha v\right|^p \mathrm{~d} x\right)^{\frac{1}{p}} &\text {if}\quad 1 \leq p<\infty, \\
		\sum\limits_{\alpha \leq k} \operatorname{ess} \sup\limits  _{\Omega}\left|D^\alpha v\right| &\text {if}\quad p=\infty .
		\end{array}\right.
	\end{equation*}
The space $C_{0}^{\infty}(\Omega)$ is constituted by all infinitely differentiable functions with compact support in $\Omega$. In particular, we usually write
$$
L^{p}_{\sigma}(\Omega)=\left\{v\in L^{p}(\Omega): \int_{\Omega}v\cdot\nabla\varphi \,{\rm d}x=0, \ \mbox{for any}\ \varphi\in C_{0}^{\infty}(\Omega)\right\}.
$$
Next, we provide the definition of \crm{global} weak $L^3$-solutions of  the Cauchy problem \eqref{EQ-tlh}-\eqref{initial value}.
\begin{defn}\label{def1}
Let $(v_0,h_0)\in L^3_{\sigma}(\mathbb R^3)$. The pair $(v,H)$ is called a \crm{global} weak $L^3$-solutions to problem \eqref{EQ-tlh}-\eqref{initial value} if
		
{\rm (a)} $v=e^{t\Delta}v_0+v_2\triangleq v_1+v_2$, $H=e^{t\Delta}h_0+H_2\triangleq H_1+H_2$  with  $(v_2,H_2)\in L^\infty(0,\infty;L^2(\mathbb R^3))\cap L^2(0,\infty;\dot{H}^1(\mathbb R^3))$;
	
{\rm (b)} The pair $(v_2,H_2)$ satisfies the following systems in $\mathbb R^3\times(0,+\infty)$
\begin{equation}\label{v_2}
\left\{
\begin{array}{l}
\begin{aligned}
&\partial_{t}v_2-\Delta v_2+\nabla\Pi=-v\cdot\nabla v+H\cdot\nabla H,\\
&\partial_{t}H_2-\Delta H_2=-v\cdot\nabla H+H\cdot\nabla v,\\
&\div v_2=\div H_2=0,\\
&v_2(\cdot,0)=0,\, H_2(\cdot,0)=0,
\end{aligned}
\end{array}\right.
\end{equation}
in the sense of distributions. Here, $\Pi\in L^\frac{3}{2}_{\rm loc}(0,\infty;L^\frac{3}{2}(\mathbb R^3))$;
		
{\rm (c)} The pair $(v_2,H_2)$ satisfies the \crm{global energy} inequality
		\begin{equation}\label{global energy inequality}
			\begin{aligned}
				&\frac{1}{2} \int_{\mathbb R^3}(\left|v_2(x,t)\right|^2+\left|H_2(x, t)\right|^2)\,{\rm d}x+\int_0^t \int_{\mathbb R^3}(\left|\nabla v_2\right|^2+\left|\nabla H_2\right|^2)\,{\rm d}x{\rm d}s\\
				&\leq\int_0^t \int_{\mathbb R^3}(v_1\otimes v:\nabla v_2-H_1\otimes H:\nabla v_2-v_1\otimes H:\nabla H_2+H_1\otimes v: \nabla H_2)\,{\rm d}x{\rm d}s,
			\end{aligned}
		\end{equation}
		for all $t\in(0,T]$, \crm{with any $T<\infty$};
		
{\rm (d)} The following local energy inequality holds for the pair $(v_2,H_2)$
		\begin{equation}\label{local energy estimate}
			\begin{aligned}
				&\int_{\mathbb{R}^3} \phi(\cdot, t)(\left|v_2(\cdot, t)\right|^2+\left|H_2(\cdot, t)\right|^2)\,{\rm d}x+2 \int_0^t \int_{\mathbb{R}^3} \phi(\left|\nabla v_2\right|^2+\left|\nabla H_2\right|^2)\,{\rm d}x{\rm d}s \\
				&\leq\int_{0}^{t} \int_{\mathbb R^3}(\left|v_2\right|^2+\left|H_2\right|^2)(\Delta \phi+\partial_t \phi)\,{\rm d}x{\rm d}s+\int_{0}^{t}\int_{\mathbb R^3}(|v_2|^2+|H_2|^2+2\Pi)v_2\nabla\phi\, {\rm d}x{\rm d}s\\
				&\quad-2\int_{0}^{t}\int_{\mathbb R^3}(v_2H_2)(H_2\cdot\nabla\phi)\,{\rm d}x{\rm d}s\\
				&\quad+\int_{0}^{t}\int_{\mathbb R^3}\big(v_1\otimes v_2+v_2\otimes v_1+v_1\otimes v_1-H_1\otimes H_2-H_2\otimes H_1-H_1\otimes H_1\big)\\
				&\quad\quad(\nabla v_2\phi+v_2\otimes\nabla\phi)\,{\rm d}x{\rm d}s\\
				&\quad+\int_{0}^{t}\int_{\mathbb R^3}\big(H_1\otimes v_2+H_2\otimes v_1+H_1\otimes v_1-v_1\otimes H_2-v_2\otimes H_1-v_1\otimes H_1\big)\\
				&\quad\quad(\nabla H_2\phi+H_2\otimes\nabla\phi)\,{\rm d}x{\rm d}s,
			\end{aligned}
		\end{equation}
		for any non-negative function $\phi\in C_0^{\infty}(\mathbb R^3\times(0,T))$.
	\end{defn}

\medskip

\begin{remark}\label{remark}
\crm{Usually, the pair $(v_1,H_1)=e^{t\Delta}(v_0, h_0)$ is called as the caloric extension, which fulfills the following nice properties}
 (for example see \cite{Galdi})
\EQS{\label{v_1}
		\|(v_1,H_1)\|_{L^\infty(0,\infty;L^3(\mathbb R^3))}+\|(v_1,H_1)\|_{L^5(0,\infty;L^5(\mathbb R^3))}
&+\|(v_1,H_1)\|_{L^8(0,\infty;L^4(\mathbb R^3))}\\&\leq C\|(v_0,h_0)\|_{L^3(\mathbb R^3)},
}
and
\EQs{
\lim_{t\to0}\|v_1(x,t)-v_0\|_{L^3(\R^3)}=0,\quad\quad \lim_{t\to0}\|H_1(x,t)-h_0\|_{L^3(\R^3)}=0.
}
\end{remark}

\crm{\begin{remark}
For the global energy inequality \eqref{global energy inequality}, one can not put $T=\infty$, since the quantity
$$
\int_{\mathbb R^3}(\left|v_2(x,t)\right|^2+\left|H_2(x, t)\right|^2)\,{\rm d}x+\int_0^t \int_{\mathbb R^3}(\left|\nabla v_2\right|^2+\left|\nabla H_2\right|^2)\,{\rm d}x{\rm d}s
$$
may be blow up as $t\to\infty$,  for details please see Lemma \ref{Energy Estimate}.
\end{remark}}

\begin{remark}
The local energy inequality \eqref{local energy estimate} demonstrates that the \crm{global} weak $L^3$-solutions are suitable in the sense of Caffarelli, Kohn and Nirenberg \cite{CKN}, \crm{which enable us to apply the very rich $\varepsilon$-regularity theory developed by Caffarelli-Kohn-Nirenberg \cite{CKN} to them}.
\end{remark}

Let us now recall a classical fixed point theorem which will be used to construct  the approximate solution sequence of \eqref{v_1}. For completeness, we give a proof here.

\begin{lem}[Fixed point theorem]\label{fixed point thm}
Let $X$ be a Banach space, and define the operator $\mathcal{M}$ as
$$
\mathcal{M}(u,v)=\mathcal{B}(u,v)+\mathcal{L}(u)+\mathcal{R}(x,t), \quad u,v\in X,
$$
where $\mathcal{B}:X\times X\to X $ is a bounded bilinear operator such that
$$
\|\mathcal{B}(u,v)\|_{X}\leq c_1\|u\|_{X}\|v\|_{X},
$$
 $\mathcal{L}: X\to X $ is a bounded linear operator such that
 $$
 \|\mathcal{L}(u)\|_{X}\leq c_2\|u\|_{X},
 $$
  and $\mathcal{R}(x,t)$ is a measurable function on $X$.  Moreover,
 $c_1,\ c_2$ and $\|\mathcal{R}\|_{X}$ satisfy
\begin{equation}\label{eq1.4}
(1-c_2)^2>4c_1\|\mathcal{R}\|_{X},\quad c_1>0,\quad 0\leq c_2<1.
\end{equation}
Then, the quadratic operator $\mathcal{M}(u,u)$ has a fixed point $\bar{u}$  such that
\begin{equation*}
\|\bar{u}\|_{X}\leq x_1=\frac{1-c_2-\sqrt{(c_2-1)^2-4c_1\|\mathcal{R}\|_{X}}}{2c_1}.
\end{equation*}
\end{lem}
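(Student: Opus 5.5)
The plan is to run Picard iteration on the closed ball
\[
\mathcal{B}_r=\{u\in X:\ \|u\|_X\le r\},\qquad r\in[x_1,x_2),
\]
where $x_1\le x_2$ are the two real roots of the quadratic
\[
c_1x^2-(1-c_2)x+\|\mathcal{R}\|_X=0 .
\]
Both roots are real and positive by \eqref{eq1.4}, since the discriminant is $(1-c_2)^2-4c_1\|\mathcal{R}\|_X>0$ and $1-c_2>0$. The iteration is the standard one,
\[
u_0=\mathcal{R},\qquad u_{n+1}=\mathcal{M}(u_n,u_n)=\mathcal{B}(u_n,u_n)+\mathcal{L}(u_n)+\mathcal{R}.
\]
I will take $r=x_1$, which yields the claimed bound directly.

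The first step is invariance of $\mathcal{B}_{x_1}$. If $\|u\|_X\le x_1$, then
\[
\|\mathcal{M}(u,u)\|_X\le c_1x_1^2+c_2x_1+\|\mathcal{R}\|_X = x_1 ,
\]
where the equality holds because $x_1$ is a root of the quadratic. Also $u_0=\mathcal{R}$ lies in the ball, since $\|\mathcal{R}\|_X=x_1(1-c_2-c_1x_1)\le x_1$. The last inequality uses $c_1x_1\ge 0$ and $1-c_2\le 1$. By induction, every iterate satisfies $\|u_n\|_X\le x_1$.

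The second step is contraction. For $u,w\in\mathcal{B}_{x_1}$, bilinearity gives
\[
\mathcal{B}(u,u)-\mathcal{B}(w,w)=\mathcal{B}(u-w,u)+\mathcal{B}(w,u-w),
\]
and hence
\[
\|\mathcal{M}(u,u)-\mathcal{M}(w,w)\|_X\le (2c_1x_1+c_2)\|u-w\|_X .
\]
Since $x_1<\frac{1-c_2}{2c_1}$ (the vertex of the parabola lies strictly between the two distinct roots),
\[
2c_1x_1+c_2<1-c_2+c_2=1 .
\]
Because the condition in \eqref{eq1.4} is strict, this gives a genuine contraction constant $q:=2c_1x_1+c_2<1$.

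Finally, the ball $\mathcal{B}_{x_1}$ is closed in the Banach space $X$, so the Banach contraction principle produces a unique fixed point $\bar u=\mathcal{M}(\bar u,\bar u)$ in it, with $\|\bar u\|_X\le x_1$. Concretely, $(u_n)$ is Cauchy since $\|u_{n+1}-u_n\|_X\le q^n\|u_1-u_0\|_X$. The limit is a fixed point by continuity of $\mathcal{M}$, which follows from the boundedness of $\mathcal{B}$ and $\mathcal{L}$. The only delicate point, which I expect to be the main obstacle, is that the contraction estimate needs $2c_1r+c_2<1$. This is exactly why the smaller root $x_1$ must be used rather than some arbitrary invariant radius, and why \eqref{eq1.4} must be a strict inequality. (The phrase ``measurable function on $X$'' should simply be read as $\mathcal{R}\in X$.)
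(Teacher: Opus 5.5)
Your proposal is correct and follows essentially the same route as the paper. You show that the closed ball of radius $x_1$ is invariant using the root identity $c_1x_1^2+c_2x_1+\|\mathcal{R}\|_X=x_1$, prove a contraction estimate there with constant $2c_1x_1+c_2=1-\sqrt{(1-c_2)^2-4c_1\|\mathcal{R}\|_X}<1$ (which you obtain via the vertex of the parabola rather than the explicit formula), and then apply the Banach contraction principle, spelling out the Picard iterates that the paper leaves implicit.
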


\begin{proof}
First,  notice that the term $\sqrt{(c_2-1)^2-4c_1\|\mathcal{R}_{X}\|}$ makes sense due to \eqref{eq1.4}. Thus, the line $Y_1(x)=x$ and the parabola $Y_2(x)=c_1x^2+c_2x+\|\mathcal{R}\|_{X}$ intersect as $x_1$ and $x_2\, (0<x_1<x_2)$ with
$$
x_1=\frac{1-c_2-\sqrt{(c_2-1)^2-4c_1\|\mathcal{R}\|_{X}}}{2c_1},\quad x_2=\frac{1-c_2+\sqrt{(c_2-1)^2-4c_1\|\mathcal{R}\|_{X}}}{2c_1}.
$$
On the other hand, $Y'_2(x)=2c_1x+c_2>0$  when $0<x\leq x_1$, thus  $Y_2(x)\leq Y_2(x_1)=x_1$.  This implies
\begin{equation*}
\begin{split}
\|\mathcal{M}(u,u)\|_{X}&\leq c_1\|u\|_{X}^2+c_2\|u\|_{X}+\|\mathcal{R}\|_{X}\\
&\leq c_1x_1^2+c_2x_1+\|\mathcal{R}\|_{X}\\
&\leq x_1,
\end{split}
\end{equation*}
for any $u\in B=\{u: \|u\|_{X}\leq x_1\}$. Therefore, the operator $\mathcal{M}(u,u)$ maps the ball $B$ to itself. Next we claim that the operator $\mathcal{M}(u,u)$ is a contractive mapping. As a matter of fact, we see that
\begin{equation*}
\begin{split}
\|\mathcal{M}(u_1,u_1)-\mathcal{M}(u_2,u_2)\|_{X}&=\|\mathcal{B}(u_1,u_1)-\mathcal{B}(u_2,u_2)+\mathcal{L}(u_1-u_2)\|_{X}\\
&\leq \|\mathcal{B}(u_1,u_1-u_2)\|_{X}+\|\mathcal{B}(u_1-u_2,u_2)\|_{X}+\|\mathcal{L}(u_1-u_2)\|_{X}\\
&\leq c_1\|u_1\|_{X}\|u_1-u_2\|_{X}+c_1\|u_2\|_{X}\|u_1-u_2\|_{X}+c_2\|u_1-u_2\|_{X}\\
&\leq \big[c_1(\|u_1\|_{X}+\|u_2\|_{X})+c_2\big]\|u_1-u_2\|_{X}\\
&\leq (2c_1x_1+c_2)\|u_1-u_2\|_{X}\\
&\leq\gamma\|u_1-u_2\|_{X},
\end{split}
\end{equation*}
with $\gamma=2c_1x_1+c_2<1$ due to the fact
\begin{align*}
\gamma&=2c_1x_1+c_2\\
&=2c_1\frac{1-c_2-\sqrt{(c_2-1)^2-4c_1\|\mathcal{R}\|_{X}}}{2c_1}+c_2\\
&=1-\sqrt{(c_2-1)^2-4c_1\|\mathcal{R}\|_{X}}.
\end{align*}
Thus, $\mathcal{M}(u,u)$ is a contractive mapping on the ball $B$,  and consequently it has a fixed point $\bar{u}$ on the ball $B$ such that
\begin{equation*}
\|\bar{u}\|_{X}\leq x_1=\frac{1-c_2-\sqrt{(c_2-1)^2-4c_1\|\mathcal{R}\|_{X}}}{2c_1},
\end{equation*}
which finishes the proof.
\end{proof}

We now present the following \crm{two lemmas}, which will be used in the proof of Theorem \ref{TH3.3}.
\begin{lem}[\cite{3-D ns equations}]\label{3-D ns equations}
If $u\in L^2(0,T;\dot{H}^1(\mathbb R^3)),\partial_t u\in L^2(0,T;{H}^{-1}(\mathbb R^3))$ with $T\in(0,+\infty]$, then we have $u\in C([0,T];L^2(\mathbb R^3))$ with
$$\sup_{t\in [0,T]}\|u(t)\|_{L^2(\mathbb R^3)}\leq C\big(\|u\|_{L^2(0,T;\dot{H}^1(\mathbb R^3))}+\|\partial_t u\|_{L^2(0,T;{H}^{-1}(\mathbb R^3))}\big).$$
\end{lem}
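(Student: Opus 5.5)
The plan is to argue frequency by frequency in $x$ and then integrate, which is the classical Lions--Magenes argument. First, by mollifying in time (after extending $u$ by reflection across $t=0$ and $t=T$, and cutting off when $T<\infty$), I reduce to the case $u\in C^1([0,T];H^1(\mathbb R^3))$. The general case then follows by density. The norms $\|u\|_{L^2(0,T;\dot H^1)}$ and $\|\partial_t u\|_{L^2(0,T;H^{-1})}$ are stable under this regularization, and the uniform-in-$t$ estimate below shows that the regularized functions are Cauchy in $C([0,T];L^2)$. This yields both the continuity $u\in C([0,T];L^2(\mathbb R^3))$ and the estimate in the limit.

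For smooth $u$ I take the spatial Fourier transform $\hat u(t,\xi)$. For a.e.\ $\xi$ and all $s,t\in[0,T]$,
\[
|\hat u(t,\xi)|^2=|\hat u(s,\xi)|^2+2\,\mathrm{Re}\int_s^t \partial_\tau\hat u(\tau,\xi)\,\overline{\hat u(\tau,\xi)}\,{\rm d}\tau .
\]
I split into high frequencies $|\xi|\ge 1$ and low frequencies $|\xi|<1$.

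\emph{High frequencies.} Here $\langle\xi\rangle\simeq|\xi|$. Averaging the identity in $s$ over $[0,T]$ and integrating in $\xi$ gives control of $\int_{|\xi|\ge1}|\hat u(t)|^2\,{\rm d}\xi$ by two terms. The first is $T^{-1}\int_0^T\!\int |\xi|^2|\hat u|^2$. The second is $2\int_0^T\!\int\langle\xi\rangle^{-1}|\partial_t\hat u|\,|\xi||\hat u|$, which by Cauchy--Schwarz is at most $2\|\partial_tu\|_{L^2H^{-1}}\|u\|_{L^2\dot H^1}$. When $T=\infty$ I let the averaging window go to infinity instead. The product term is then handled by $2ab\le a^2+b^2$, which gives the stated sum form after taking square roots.

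\emph{Low frequencies.} This is the main obstacle. For $|\xi|<1$ the $\dot H^1$ weight $|\xi|$ degenerates while the $H^{-1}$ weight $\langle\xi\rangle^{-1}\simeq1$ does not, so the product term cannot be closed with $\|u\|_{L^2\dot H^1}$ alone. Indeed, the scaling $u(x,t)=\varphi(x/\lambda)$, which is time independent, shows that the displayed inequality with a $T$-independent constant cannot hold with only the two norms on the right. The fix I would pursue is to use the information the lemma is applied with later: in the energy estimates of Section~3, the function $u$ (namely $v_2$ or $H_2$) has $u(0)=0$, or at least belongs to $L^2(0,T;L^2)$. With $s=0$ and $u(0)=0$, the low-frequency part is bounded by $2\int_0^T\!\int_{|\xi|<1}|\partial_t\hat u|\,|\hat u|$. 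I would bound this by $\|\partial_t u\|_{L^2H^{-1}}\,\|u\|_{L^2(0,T;L^2)}$, estimate $\|u\|_{L^2L^2}\le \sqrt T\sup_t\|u(t)\|_{L^2}$, and absorb it into the left-hand side via Young's inequality. The price is a constant depending on $T$.

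Accordingly, my concrete plan is to prove the statement in the form $u\in C([0,T];L^2)$ with $\sup_t\|u(t)\|_{L^2}$ controlled by the two given norms. The constant $C$ is to be understood as allowed to depend on $T$ and on the normalization $u(0)=0$ (equivalently, reading $\dot H^1$ as the inhomogeneous $H^1$, in which case the high-frequency argument above works for all $\xi$ with no splitting). I expect the low-frequency step to be the only delicate point; everything else is the routine regularization and Cauchy--Schwarz bookkeeping described above.
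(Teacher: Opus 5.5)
You are right that the lemma cannot be proved as printed, and flagging this is the most useful part of your proposal. The paper gives no argument of its own: it only cites Robinson--Rodrigo--Sadowski. The result being invoked is the Lions--Magenes lemma, which is stated for $u\in L^2(0,T;H^1)$ with the \emph{inhomogeneous} space, so the $\dot H^1$ is a misprint. The lemma is used in Lemma~\ref{H^-1 to X_T} (on bounded time intervals) and in Step~1 of Theorem~\ref{TH3.3}. In both places the function also lies in $L^\infty(0,T;L^2)$ with $T<\infty$ and vanishes at $t=0$, so either of your corrections covers the applications. Under the $H^1$ reading your argument is the standard proof: the identity for $|\hat u(t,\xi)|^2$, averaging in $s$, and Cauchy--Schwarz with the weights $\langle\xi\rangle^{\pm1}$. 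It gives $\sup_t\|u(t)\|_{L^2}^2\le T^{-1}\|u\|_{L^2(0,T;L^2)}^2+2\|\partial_tu\|_{L^2(0,T;H^{-1})}\|u\|_{L^2(0,T;H^1)}$. This also holds for $T=\infty$, since then $\liminf_{s\to\infty}\|u(s)\|_{L^2}=0$.

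Three points need correcting.

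\emph{The obstruction is stronger than you say.} For fixed $T<\infty$, take $u\equiv\varphi(x/\lambda)$. The left side is $\lambda^{3/2}\|\varphi\|_{L^2}$ and the right side is $C\sqrt T\,\lambda^{1/2}\|\nabla\varphi\|_{L^2}$, so no constant $C(T)$ works. Moreover, $u\equiv(1+|x|^2)^{-1/2}\in\dot H^1\setminus L^2$ even violates $u\in C([0,T];L^2)$. So letting $C$ depend on $T$ achieves nothing; only the added hypothesis does.

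\emph{Your two corrections are not equivalent.} The version with $\dot H^1$ and $u(0)=0$ fails for $T=\infty$. Take $u=\psi(t/\lambda)\varphi(x/\lambda)$ with $\psi$ the tent function on $[0,2]$, $\psi(0)=0$, $\psi(1)=1$. Then $\sup_t\|u(t)\|_{L^2}$ is of order $\lambda^{3/2}$, while both norms on the right are $O(\lambda)$. The $H^1$ version, by contrast, holds for all $T\in(0,\infty]$.

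\emph{The low-frequency step has a gap.} Absorbing via $\|u\|_{L^2L^2}\le\sqrt T\sup_t\|u(t)\|_{L^2}$ presupposes that this supremum is finite. Your regularization (reflection across $t=0$ and time mollification) does not supply that: it neither turns $\dot H^1$-valued functions into $H^1$-valued ones nor preserves $u(0)=0$. Both problems disappear if you drop the regularization. For a.e.\ $\xi$ one has $\hat u(\cdot,\xi)\in H^1(0,T)$. For $|\xi|<1$, the condition $\hat u(0,\xi)=0$ gives directly $|\hat u(t,\xi)|^2\le 2t\int_0^t\langle\xi\rangle^{-2}|\partial_\tau\hat u|^2\,{\rm d}\tau$. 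Hence $\sup_t\int_{|\xi|<1}|\hat u(t,\xi)|^2\,{\rm d}\xi\le 2T\|\partial_tu\|^2_{L^2(0,T;H^{-1})}$, with no absorption needed. These pointwise-in-$\xi$ bounds also give an integrable majorant for $\sup_t|\hat u(t,\xi)|^2$, so continuity of $t\mapsto u(t)$ in $L^2$ follows by dominated convergence.
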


\medskip

\begin{lem}\label{H^-1 to X_T}
	If $g\in L^2(0,\infty;\dot{H}^{-1}(\mathbb R^3))$, then $\crm{U(x,t)\triangleq\int_{0}^{t}\int_{\R^3}W_{(t-s)}(x-y)\mathbb Pg(y)\,{\rm d}y{\rm d}s}\in C(0,\infty;L^2(\mathbb R^3))\cap L^2(0,\infty;\dot{H}^1(\mathbb R^3))$ and
$$
\|U\|_{ L^\infty(0,\infty;L^2(\mathbb R^3))\cap L^2(0,\infty;\dot{H}^1(\mathbb R^3))}\leq C\|g\|_{L^2(0,\infty;\dot{H}^{-1}(\mathbb R^3))}.
$$
Here, $W_t(x)=\frac{1}{(4\pi t)^{\frac{3}{2}}}e^{-\frac{|x|^2}{4t}}$ and $\mathbb{P}$ is Leray-Hopf projection operator in $\R^3$ defined as
\begin{equation*}
(\mathbb{P} f)_i = f_i + \partial_i (-\Delta)^{-1} \nabla \cdot f, \quad \forall ~f\in C_{0}^{\infty}(\R^3).
\end{equation*}
\end{lem}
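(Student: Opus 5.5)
The plan is to prove Lemma \ref{H^-1 to X_T} by Fourier analysis: $U$ is the Duhamel solution of the heat equation with forcing $\mathbb{P}g$, that is, $\partial_t U-\Delta U=\mathbb{P}g$ with $U(0)=0$. Two facts reduce everything to scalar estimates on the Fourier side. First, $\mathbb{P}$ is a Fourier multiplier with symbol the orthogonal projection $I-\xi\otimes\xi/|\xi|^2$, so $|\widehat{\mathbb{P}g}(\xi,s)|\le|\hat g(\xi,s)|$ pointwise. Consequently $\|\mathbb{P}g\|_{\dot H^{-1}}\le\|g\|_{\dot H^{-1}}$, and we may assume $\mathbb{P}g=g$. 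Second, the heat kernel acts as multiplication by $e^{-(t-s)|\xi|^2}$. Hence
\[
\hat U(\xi,t)=\int_0^t e^{-(t-s)|\xi|^2}\,\widehat{\mathbb{P}g}(\xi,s)\,{\rm d}s .
\]
We work first with $g$ in a dense class, for instance smooth with $\hat g$ compactly supported away from $\xi=0$, so that all manipulations are justified. The general case follows by density together with the linearity of $g\mapsto U$.

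\emph{The $L^\infty_tL^2_x$ bound.} Fix $\xi$ and write $f(s)=|\xi|^{-1}|\hat g(\xi,s)|$. Cauchy--Schwarz in $s$ gives
\[
|\hat U(\xi,t)|\le\int_0^t |\xi|e^{-(t-s)|\xi|^2}f(s)\,{\rm d}s\le\Big(\int_0^t|\xi|^2e^{-2(t-s)|\xi|^2}{\rm d}s\Big)^{1/2}\|f\|_{L^2_s}\le\tfrac{1}{\sqrt2}\|f\|_{L^2_s}.
\]
Squaring, integrating in $\xi$ and applying Plancherel yields
\[
\|U(t)\|_{L^2}^2\le\tfrac12\|g\|_{L^2(0,\infty;\dot H^{-1})}^2 \quad\text{for every } t .
\]

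\emph{The $L^2_t\dot H^1_x$ bound.} We have
\[
|\xi||\hat U(\xi,t)|\le (K*\tilde f)(t),
\]
where $K(\tau)=|\xi|^2e^{-\tau|\xi|^2}\mathbbm 1_{\tau>0}$ and $\tilde f$ is $f$ extended by zero to $s<0$. Young's inequality in $t$ gives $\|K*\tilde f\|_{L^2_t}\le\|K\|_{L^1}\|f\|_{L^2}=\|f\|_{L^2}$. Integrating over $\xi$ (Fubini) then gives
\[
\|\nabla U\|_{L^2_{t,x}}\le\|g\|_{L^2\dot H^{-1}} .
\]
This is a uniform-in-$\xi$ Schur-type argument, so no issue arises at $\xi=0$.

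\emph{Continuity in time.} This is the step I expect to need the most care, since Lemma \ref{3-D ns equations} is stated with $H^{-1}$ rather than $\dot H^{-1}$. I would avoid that mismatch by arguing directly. For $t'\to t$, the difference $\hat U(\xi,t')-\hat U(\xi,t)$ tends to $0$ for a.e. $\xi$; this uses dominated convergence in $s$, valid since $f(\xi,\cdot)\in L^2_s$ for a.e. $\xi$. The difference is also dominated by $2\cdot\tfrac{1}{\sqrt2}\|f(\xi,\cdot)\|_{L^2_s}$, which is square integrable in $\xi$. Dominated convergence in $\xi$ then gives $U(t')\to U(t)$ in $L^2$, and the same argument gives $U(t)\to0$ as $t\to0$. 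This yields $U\in C([0,\infty);L^2)$ with the stated bound and $C=\sqrt2$.

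Alternatively, on each bounded interval $U$ solves the heat equation with $\partial_tU=\Delta U+\mathbb{P}g\in L^2\dot H^{-1}$, and the standard Lions--Magenes argument (the homogeneous version of Lemma \ref{3-D ns equations}) also yields continuity.
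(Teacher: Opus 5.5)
Your argument is correct and uses the same Fourier-side strategy as the paper. The $L^\infty_tL^2_x$ bound is exactly the paper's computation: Cauchy--Schwarz in $s$ against $\int_0^t e^{-2(t-s)|\xi|^2}\,{\rm d}s\le (2|\xi|^2)^{-1}$.

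The other two steps take different routes.

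For the $L^2_t\dot H^1_x$ bound, the paper also Fourier transforms in time. It uses $|i\xi/(i\tau+|\xi|^2)|\le|\xi|^{-1}$ together with Plancherel in $(\xi,\tau)$. You instead apply Young's inequality in $t$ at each fixed frequency, using $\|K\|_{L^1}=1$. These are the same estimate seen from two sides, since the time-Fourier transform of your $K$ is $|\xi|^2/(|\xi|^2+i\tau)$, of modulus at most $1$. Your version does not require extending the problem to all $t\in\mathbb R$.

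For continuity, the paper shows $\Delta U,\ \mathbb Pg\in L^2H^{-1}$, hence $\partial_tU\in L^2H^{-1}$, and then invokes Lemma \ref{3-D ns equations}. You apply dominated convergence in $\xi$, with the majorant $\sqrt2\,\big\|\,|\xi|^{-1}\hat g(\xi,\cdot)\big\|_{L^2_s}$ coming from the pointwise form of your first estimate. Your route is self-contained and gives $U(0)=0$ directly. The paper's route additionally records a bound on $\partial_tU$, which is the same mechanism it reuses in Theorem \ref{TH3.3}.

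One correction to your motivation. There is no mismatch on the time-derivative side: since $(1+|\xi|^2)^{-1}\le|\xi|^{-2}$, you have $\dot H^{-1}\hookrightarrow H^{-1}$. The actual subtlety in using Lemma \ref{3-D ns equations} is that the Lions--Magenes argument needs $U\in L^2(0,T;H^1)$, not merely $\dot H^1$. That condition holds on bounded intervals because of the $L^\infty L^2$ bound you already have.
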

\begin{proof}
First, by the Plancherel equality, one has for any $t\in(0,+\infty)$
\begin{equation*}
\begin{aligned}
\|U\|^2_{L^2(\mathbb R^3)}&= \left\|\int_{0}^{t}e^{-(t-s)\xi^2}
\Big(\hat{g}_i-\frac{\xi_i\xi_j}{|\xi|^2}\hat{g}_j\Big)\,{\rm d}s\right\|^2_{L^2(\mathbb R^3)}\\
&\leq\int_{\mathbb R^3}\int_{0}^{t}e^{-2(t-s)\xi^2}\,{\rm d}s\int_{0}^{t}|\hat{g}|^2\,{\rm d}s{\rm d}\xi\\
&=\int_{\mathbb R^3}\frac{1}{2\xi^2}(1-e^{-2t\xi^2})\int_{0}^{t}|\hat{g}|^2\,{\rm d}s{\rm d}\xi\\
&\leq\frac{1}{2}\int_{0}^{\infty}\int_{\mathbb R^3}\xi^{-2}|\hat{g}|^2\,{\rm d}\xi{\rm d}s\\
&\leq\frac{1}{2}\|g\|^2_{L^2(0,\infty;\dot{H}^{-1}(\mathbb R^3))}.
\end{aligned}
\end{equation*}
Here, $\hat{g}$ denotes by the Fourier transform of $g$ in $\R^n$.  Therefore,
$$
\|U\|_{L^\infty(0,\infty;L^2(\mathbb R^3)) }\leq\|g\|_{L^2(0,\infty;\dot{H}^{-1}(\mathbb R^3))}.
$$
On the other hand, by using the Plancherel equality again, one has
\begin{equation*}
\begin{aligned}
\|\nabla U\|_{L^2(0,\infty;L^2(\mathbb R^3))}&\leq\Big(\int_{0}^{\infty}\int_{\mathbb R^3}\Big|\frac{i\xi}{i\tau+\xi^2}|^2|\hat{g}\Big|^2\,{\rm d}\xi{\rm d}\tau\Big)^{\frac{1}{2}}\\
&=\Big(\int_{0}^{\infty}\int_{\mathbb R^3}\frac{\xi^4}{\tau^2
+\xi^4}\xi^{-2}|\hat{g}|^2\,{\rm d}\xi {\rm d}\tau\Big)^{\frac{1}{2}}\\
&\leq\Big(\int_{0}^{\infty}\int_{\mathbb R^3}\xi^{-2}|\hat{g}|^2\,{\rm d}\xi {\rm d}\tau\Big)^{\frac{1}{2}}\\
&\leq \|g\|_{L^2(0,\infty;\dot{H}^{-1}(\mathbb R^3))}.
\end{aligned}
\end{equation*}
From this, it is clear that
$$
\|U\|_{L^2(0,\infty;\dot{H}^{1}(\mathbb R^3))}\leq\|g\|_{L^2(0,\infty;\dot{H}^{-1}(\mathbb R^3))}.
$$
\crm{Next,} we claim that $\Delta U\in L^2(0,\infty;\dot{H}^{-1}(\mathbb R^3))$.
\crm{In fact,}
\begin{align*}
&\|\Delta U\|^2_{L^2(0,T;H^{-1}(\mathbb R^3))}=\int_{0}^{T}\|\Delta U\|^2_{H^{-1}(\mathbb R^3)}\,{\rm d}t\\
&=\int_{0}^{T}\Big(\sup_{\varphi\in H^{1}(\mathbb R^3)}\frac{|\langle\Delta U,\varphi\rangle|}{\|\varphi\|_{H^{1}(\mathbb R^3)}}\Big)^2\,{\rm d}t\\
&\leq \int_{0}^{T}\|\nabla U\|^2_{L^2(\mathbb R^3)}\,{\rm d}t\leq \|U\|^2_{L^2(0,T;\dot {H}^1(\mathbb R^3))}.
\end{align*}
It is clear that $U$ is a solution to \crm{the following heat equation in $\R^3\times(0,+\infty)$}
\begin{equation*}
\left\{
\begin{array}{l}
\begin{aligned}
&\partial_t u-\Delta u=\mathbb Pg,\\
&u(x,0)=0,
\end{aligned}
\end{array} \right.
\end{equation*}
and we already know that $\Delta U$ and $\mathbb Pg$ belong to $L^2(0,\infty;H^{-1}(\mathbb R^3)$, from which we can deduce that $\partial_t U\in L^2(0,\infty;H^{-1}(\mathbb R^3))$. \crm{This, along with the Lemma \ref{3-D ns equations}, derives} $U\in C(0,\infty;L^2(\mathbb R^3)).$
\end{proof}

We conclude this subsection by introducing a compactness lemma known as the Aubin-Lions lemma, which enables us to obtain a \crm{global} weak $L^3$-solution from the approximate solution sequence.

\begin{lem}[\cite{BV}]\label{RTtheorem_23}
Let $X\subset Y \subset Z$ be separable, reflexive Banach spaces, such that the embedding $X\subset Y$ is compact and the embedding $Y \subset Z$ is continuous. Let $T>0$, and assume that we have a sequence of functions $\{u_n\}_{n\geq1}$ such that
$$
\{u_n\}\,\mbox{is uniformly bounded in } L^p(0,T;X),
$$
$$
\{\partial_tu_n\}\,\mbox{is uniformly bounded in } L^q(0,T;Z)
$$
for $p,q>1$. Then the sequence $\{u_n\}$ is  precompact in $L^p(0,T;Y)$.
\end{lem}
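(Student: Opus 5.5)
The plan is the classical Ehrling-plus-time-equicontinuity argument. \textbf{Step 1 (Ehrling inequality).} First I show that for every $\eta>0$ there is $C_\eta$ with
$$
\|w\|_Y\leq \eta\|w\|_X+C_\eta\|w\|_Z,\qquad w\in X.
$$
Argue by contradiction. If this fails, there are $\eta_0>0$ and $w_k\in X$ with $\|w_k\|_X=1$ and $\|w_k\|_Y>\eta_0+k\|w_k\|_Z$. Compactness of $X\subset Y$ gives a subsequence with $w_k\to w$ in $Y$. Since $\|w_k\|_Y$ is bounded, $\|w_k\|_Z\to0$, so $w=0$ by continuity of $Y\subset Z$. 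This contradicts $\|w_k\|_Y\geq\eta_0$.

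\textbf{Step 2 (reduction to convergence in $Z$).} Since $X$ and $Z$ are reflexive and $p,q>1$, the spaces $L^p(0,T;X)$ and $L^q(0,T;Z)$ are reflexive. So a subsequence (not relabeled) satisfies $u_n\rightharpoonup u$ in $L^p(0,T;X)$ and $\partial_tu_n\rightharpoonup\partial_tu$ in $L^q(0,T;Z)$. Set $w_n=u_n-u$, and let $M$ bound $w_n$ and $\partial_t w_n$ in these norms. Applying Step 1 pointwise in time and integrating gives
$$
\|w_n\|_{L^p(0,T;Y)}\leq \eta M+C_\eta\|w_n\|_{L^p(0,T;Z)}.
$$
Hence it suffices to prove $w_n\to0$ in $L^p(0,T;Z)$ and then let $\eta\to0$.

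\textbf{Step 3 (pointwise convergence in $Z$).} Since $w_n\in W^{1,1}(0,T;Z)\subset C([0,T];Z)$, the $w_n$ are uniformly bounded in $C([0,T];Z)$; this uses the embedding $X\subset Z$ and the bound on $\partial_tw_n$. Fix $t$ and a small $h>0$, taking the interval $(t,t+h)$, or $(t-h,t)$ when $t$ is near $T$. Averaging $w_n(t)=w_n(s)-\int_t^s\partial_\tau w_n\,{\rm d}\tau$ over $s$ gives
$$
w_n(t)=\frac1h\int_t^{t+h}w_n(s)\,{\rm d}s-\frac1h\int_t^{t+h}\!\!\int_t^s\partial_\tau w_n\,{\rm d}\tau{\rm d}s=:a_n+b_n .
$$
Hölder's inequality gives $\|b_n\|_Z\leq h^{1-1/q}M$, uniformly in $n$. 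For $a_n$, Hölder gives $\|a_n\|_X\leq h^{-1/p}M$. Moreover $a_n\rightharpoonup0$ in $X$, since pairing with $\ell\in X^*$ is pairing $w_n$ with $\ell\mathbbm{1}_{(t,t+h)}/h\in L^{p'}(0,T;X^*)$. The compact embedding then upgrades this to $a_n\to0$ strongly in $Y$, hence in $Z$. Choosing $h$ small first and then $n$ large yields $w_n(t)\to0$ in $Z$ for every $t\in[0,T]$.

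\textbf{Step 4 (conclusion).} Pointwise convergence together with the uniform bound in $C([0,T];Z)$ lets dominated convergence give $w_n\to0$ in $L^p(0,T;Z)$. Step 2 then yields $u_n\to u$ in $L^p(0,T;Y)$, which is the claimed precompactness.

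I expect Step 3 to be the main obstacle. The weak limits must be converted into strong pointwise-in-time convergence, and this requires balancing $h$: the $X$-bound $h^{-1/p}M$ on the averages degenerates as $h\to0$, while the time-derivative error $h^{1-1/q}M$ only vanishes as $h\to0$.
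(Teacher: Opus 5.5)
The paper does not prove this lemma: it quotes it from \cite{BV} and uses it as a black box in the proof of Theorem~\ref{thm 1.1}, so there is no in-paper argument to compare yours with. Your proposal is the classical Lions--Temam proof of the Aubin--Lions lemma, and it is correct for $1<p<\infty$. Ehrling's inequality reduces the claim to strong convergence in $L^p(0,T;Z)$. The splitting $w_n(t)=a_n+b_n$, with the compact embedding applied to the time averages $a_n$ and H\"older applied to the derivative term $b_n$, gives convergence in $Z$ for every $t$. Dominated convergence, using the uniform bound in $C([0,T];Z)$, then finishes. What your version adds over the paper is self-containedness; the paper gains only brevity.

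Two small points are worth tightening. First, identifying the weak limit of $\partial_tu_n$ with $\partial_tu$ needs one line. Pass to the limit in $\int_0^T\langle\ell,u_n\rangle\phi'\,{\rm d}t=-\int_0^T\langle\ell,\partial_tu_n\rangle\phi\,{\rm d}t$ for $\ell\in Z^*$ and $\phi\in C_0^\infty(0,T)$, using that $\ell|_X\in X^*$. Second, Step~3 needs no real balancing of $h$. Once $h$ is fixed, $\|a_n\|_X\le h^{-1/p}M$ is uniform in $n$, which is all the compactness step uses, so choosing $h$ first and then $n$ already settles it.

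The statement as written formally allows $p=\infty$ or $q=\infty$. Your Step~2 (which uses reflexivity) and Step~4 (dominated convergence, which only gives finite-exponent convergence) do not cover these cases. The case $q=\infty$ reduces to a finite $q$ because $(0,T)$ is bounded. For $p=\infty$, do the following.
\begin{itemize}
\item Take the limit $u$ weakly in $L^2(0,T;X)$. It stays in the $L^\infty(0,T;X)$-ball, since that ball is convex and closed in $L^2(0,T;X)$.
\item Observe that $\|w_n(t)-w_n(s)\|_Z\le M|t-s|^{1-1/q}$, so $\{w_n\}$ is equicontinuous in $Z$. Your pointwise convergence in $Z$ therefore upgrades to uniform convergence on $[0,T]$.
\item Ehrling's inequality then gives convergence in $L^\infty(0,T;Y)$.
\end{itemize}
The paper only applies the lemma with $p=q=2$, so this gap is cosmetic for its purposes.
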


\subsection{Several useful inequalities}
In this subsection, we introduce some important inequalities which play an important role in the proof of this paper. To begin with, we introduce the well-known Gronwall's inequality:

\begin{lem}[Gronwall's inequality \cite{3-D ns equations}]\label{Gronwall inequality}
{\rm(a)} Let $\eta:[0, T] \rightarrow[0, \infty)$ be an absolutely continuous function that satisfies the differential inequality
$$
\eta^{\prime}(t) \leq \phi(t) \eta(t)+\psi(t)
$$
where $\phi$ and $\psi$ are non-negative integrable functions. Then
$$
\eta(t) \leq \mathrm{e}^{\crm{\int_0^{t} \phi(s) \mathrm{d} s}}\left[\eta(0)+\int_0^t \psi(s) \mathrm{d} s\right] \quad \text { for all } t \in[0, T].
$$
{\rm(b)} Let $\eta:[0, T] \rightarrow[0, \infty)$ be a continuous function satisfying
$$
\eta(t) \leq a(t)+\int_0^t\phi(s)\eta(s)\,{\rm d}s,
$$
where $a,\phi:[0, T] \rightarrow[0, \infty)$  are integrable functions and $a$ is increasing. Then
$$
\eta(t) \leq a(t)\exp\Big(\int_0^t \phi(s)\,{\rm d} s\Big) \quad \text { for all } t \in[0, T].
$$
\end{lem}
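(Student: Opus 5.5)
We prove the two parts of Lemma \ref{Gronwall inequality} separately, in both cases by reducing to an exact derivative of a product with an integrating factor.

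\emph{Part (a).} Put $\Phi(t)=\int_0^t\phi(s)\,{\rm d}s$. Since $\phi$ is integrable, $\Phi$ is absolutely continuous, and so is $e^{-\Phi}$. Hence $\eta e^{-\Phi}$ is absolutely continuous as well, being a product of absolutely continuous functions on a compact interval. For almost every $s$ we have
\begin{equation*}
\frac{{\rm d}}{{\rm d}s}\big(e^{-\Phi(s)}\eta(s)\big)=e^{-\Phi(s)}\big(\eta'(s)-\phi(s)\eta(s)\big)\leq e^{-\Phi(s)}\psi(s)\leq \psi(s),
\end{equation*}
where the last step uses $\Phi\geq0$ and $\psi\geq0$. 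Integrating over $[0,t]$, which is legitimate by absolute continuity, gives $e^{-\Phi(t)}\eta(t)\leq\eta(0)+\int_0^t\psi\,{\rm d}s$. Multiplying by $e^{\Phi(t)}$ yields the claim.

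\emph{Part (b).} Fix $t_0\in[0,T]$. For $t\leq t_0$, monotonicity of $a$ gives $\eta(t)\leq a(t_0)+\int_0^t\phi\eta\,{\rm d}s$. Set $\zeta(t)=a(t_0)+\int_0^t\phi(s)\eta(s)\,{\rm d}s$. Because $\eta$ is continuous on a compact interval it is bounded, so $\phi\eta$ is integrable. Therefore $\zeta$ is absolutely continuous with $\zeta'=\phi\eta\leq\phi\zeta$ almost everywhere on $[0,t_0]$, using $\phi\geq0$ and $\eta\leq\zeta$. Applying part (a) to $\zeta$ with $\psi=0$ and $\zeta(0)=a(t_0)$ gives $\zeta(t)\leq a(t_0)\exp\big(\int_0^t\phi\,{\rm d}s\big)$. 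Taking $t=t_0$ and using $\eta(t_0)\leq\zeta(t_0)$ proves (b), since $t_0$ was arbitrary.

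\emph{Main obstacle.} The only delicate point is regularity: we need the product-rule computation and the fundamental theorem of calculus to hold in the absolutely continuous setting rather than the $C^1$ one. This is handled by the facts that products of absolutely continuous functions on a compact interval are absolutely continuous, and that an absolutely continuous function is the integral of its almost-everywhere derivative. In (b), the freezing of $a$ at $a(t_0)$ is what lets us avoid differentiating $a$, which is only assumed increasing and integrable.
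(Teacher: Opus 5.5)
Your argument is correct: the integrating-factor computation in (a) and the reduction of (b) to (a) by freezing $a$ at $a(t_0)$ are the standard proofs. When you apply (a) to $\zeta$ on $[0,t_0]$, the nonnegativity it requires holds automatically, since $a(t_0)\geq 0$ and $\phi\eta\geq 0$. The paper gives no proof of this lemma and simply cites it from the Robinson--Rodrigo--Sadowski monograph, so there is no separate argument to compare against.
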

\begin{lem}[Sobolev-Poincere type inequality]\label{sobolev-poincare inequality}
	Let \( \Omega \subset \mathbb{R}^n \) be a bounded domain and \(1\leq p<n \), there exists \crm{a positive constant $C$ depending only on $\Omega$} such that for all measurable functions \( u \in W^{1,p}(\Omega) \), the following holds:
	
	\[
	\| u - u_\Omega \|_{L^q(\Omega)} \leq C \| \nabla u \|_{L^p(\Omega)}
	\]
	where
	 \( u_\Omega = \frac{1}{|\Omega|} \int_\Omega u \, {\rm d}x \) is the average value of \( u \) over \( \Omega \), $1\leq q\leq \frac{np}{n-p}$.
\end{lem}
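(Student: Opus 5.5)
This is the classical Sobolev--Poincar\'e inequality, so I would prove it in the standard way rather than build anything new. The natural route is through the Sobolev inequality and the Poincar\'e inequality on $\Omega$, combined via H\"older on the bounded domain. One point has to be flagged first. As stated, with $\Omega$ only a bounded domain, the inequality can fail for irregular domains (for instance rooms-and-passages examples). I would therefore assume, as is implicit in the applications in this paper, that $\Omega$ is a bounded Lipschitz domain, e.g.\ a ball $B(R)$.

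The first step is the Sobolev embedding on $\Omega$. Since $\Omega$ is Lipschitz, there is a bounded extension operator $E: W^{1,p}(\Omega)\to W^{1,p}(\R^n)$. Combining this with the Gagliardo--Nirenberg--Sobolev inequality on $\R^n$ gives
\[
\|w\|_{L^{p^*}(\Omega)}\le C\|w\|_{W^{1,p}(\Omega)}, \qquad p^*=\tfrac{np}{n-p}.
\]
Since $|\Omega|<\infty$, H\"older's inequality then gives $\|w\|_{L^q(\Omega)}\le |\Omega|^{1/q-1/p^*}\|w\|_{L^{p^*}(\Omega)}$ for every $1\le q\le p^*$. It therefore suffices to prove the case $q=p^*$.

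The second step is the Poincar\'e inequality
\[
\|u-u_\Omega\|_{L^p(\Omega)}\le C\|\nabla u\|_{L^p(\Omega)}.
\]
I would prove it by the standard contradiction and compactness argument. Suppose there are $u_k$ with $(u_k)_\Omega=0$, $\|u_k\|_{L^p}=1$ and $\|\nabla u_k\|_{L^p}\to0$. Then $u_k$ is bounded in $W^{1,p}(\Omega)$. By Rellich--Kondrachov, which again uses the Lipschitz boundary, a subsequence converges in $L^p$ to some $u$ with $\nabla u=0$. Because $\Omega$ is connected, $u$ is constant, and since $u_\Omega=0$ we get $u=0$. This contradicts $\|u\|_{L^p}=1$.

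Finally, apply the first step to $w=u-u_\Omega$. This gives
\[
\|u-u_\Omega\|_{L^{p^*}}\le C\bigl(\|u-u_\Omega\|_{L^p}+\|\nabla u\|_{L^p}\bigr)\le C\|\nabla u\|_{L^p},
\]
using the second step. The main obstacle is only in the two analytic inputs, the extension operator and compactness, which both need boundary regularity. In the ball case I would cite them from standard references rather than reprove them.
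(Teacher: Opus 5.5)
The paper gives no proof of this lemma. It is quoted as a classical fact and used only on balls $B(x_0,R)\subset\R^3$ to obtain \eqref{Pi}. Your argument is the standard one and is correct: extension plus Gagliardo--Nirenberg--Sobolev for $W^{1,p}(\Omega)\hookrightarrow L^{p^*}(\Omega)$, H\"older to reach every $q\le p^*$, Poincar\'e via Rellich--Kondrachov and connectedness, and finally the embedding applied to $u-u_\Omega$.

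Your regularity caveat is a real correction to the statement, not a technicality: for an arbitrary bounded domain the lemma is false. Take the outward cusp $\Omega=\{(x,y)\in\R^2: 0<x<1,\ |y|<x^{\alpha}\}$ with $\alpha>1$. The function $u=x^{-\beta}$ with $\frac{\alpha+1}{2}\le\beta<\alpha$ belongs to $W^{1,1}(\Omega)$ but not to $L^2(\Omega)$, so the case $n=2$, $p=1$, $q=p^*=2$ fails. Rooms-and-passages domains break even the case $q=p$. Since only balls are used in the paper, adding a Lipschitz hypothesis costs nothing.

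One thing your proof does not supply, and the paper actually relies on, is the explicit dependence of the constant on the radius in \eqref{Pi}. The compactness argument only produces a non-explicit $C(\Omega)$. You get the radius dependence by proving the inequality on $B(0,1)$ and rescaling via $x\mapsto x_0+Rx$, which gives $C(B(x_0,R))=C(B(0,1))\,R^{1-\frac{n}{p}+\frac{n}{q}}$. For instance, $n=3$, $p=\frac98$, $q=\frac32$ gives $R^{1/3}$; raising to the power $\frac32$ yields the factor $R^{1/2}$ in the first line of \eqref{Pi}. Note also that the constant depends on $n,p,q$, not only on $\Omega$ as the statement claims.
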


The following lemma is an  analogue of the maximum regularity estimate of the heat operator, called as the Solonnikov coercive estimate which will be often used in the proof of our main results.
\begin{lem}[Solonnikov coercive estimates \cite{solonnikov estimates}]\label{Solonnikove estimates}
	Assume $1 < s,l < \infty$, then for every $f \in L^l(0,T;L^s(\mathbb R^3))$, there exists a unique solution $(u, \nabla p)$ of the Stokes system
	\begin{equation*}
		\begin{cases}
			\partial_t u - \Delta u + \nabla p = f, \\
			\operatorname{div} \, u = 0, \\
			u(x,0) = 0
		\end{cases}
	\end{equation*}
	in $ \mathbb{R}^3\times(0,T)$ satisfying
	\begin{equation*}
		\|\partial_t u\|_{L^l(0,T;L^s(\mathbb R^3))}+\|\nabla^2 u\|_{L^l(0,T;L^s(\mathbb R^3))}+\|\nabla p\|_{L^l(0,T;L^s(\mathbb R^3))}\leq C(l,s)\|f\|_{L^l(0,T;L^s(\mathbb R^3))}.
	\end{equation*}
\end{lem}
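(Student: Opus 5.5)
The statement is the Solonnikov coercive (maximal regularity) estimate for the Stokes system in the whole space $\mathbb R^3\times(0,T)$. I will reduce it to maximal $L^l(L^s)$ regularity for the heat equation by eliminating the pressure with the Leray--Hopf projection $\mathbb P$ introduced in Lemma \ref{H^-1 to X_T}.

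\textbf{Step 1 (pressure).} Since $\operatorname{div} u=0$, taking the divergence of the equation formally gives $\Delta p=\operatorname{div} f$. So I set $\nabla p:=(I-\mathbb P)f=-\nabla(-\Delta)^{-1}\operatorname{div} f$. Its symbol is $\xi_i\xi_j/|\xi|^2$, a composition of Riesz transforms. By the Calder\'on--Zygmund theorem this operator is bounded on $L^s(\mathbb R^3)$ for $1<s<\infty$. Applying the bound for a.e. $t$ and integrating in time gives $\|\nabla p\|_{L^l(0,T;L^s)}\le C(s)\|f\|_{L^l(0,T;L^s)}$.

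\textbf{Step 2 (velocity).} I take $u$ to be the Duhamel solution of the heat problem $\partial_t u-\Delta u=\mathbb Pf$, $u(0)=0$, namely $u=\int_0^t W_{t-s}*\mathbb Pf(s)\,ds$ as in Lemma \ref{H^-1 to X_T}. Because $\mathbb P$ commutes with the heat semigroup, $\operatorname{div}u=0$, and $(u,\nabla p)$ solves the Stokes system in the sense of distributions. It then suffices to prove the parabolic maximal regularity bound
\[
\|\nabla^2 u\|_{L^l(0,T;L^s)}\le C(l,s)\|g\|_{L^l(0,T;L^s)},\qquad g=\mathbb Pf .
\]
The bound for $\partial_t u$ follows from the equation, and $\|g\|_{L^l(0,T;L^s)}\le C(s)\|f\|_{L^l(0,T;L^s)}$ by Step 1.

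\textbf{Step 3 (maximal regularity).} This is the main obstacle. I extend $g$ by zero outside $(0,T)$, so the problem lives on $\mathbb R^3\times\mathbb R$. Then $\nabla^2 u=K*g$ with space-time kernel $K=\nabla^2\Gamma$, where $\Gamma$ is the heat kernel. This is a singular integral of Calder\'on--Zygmund type with respect to the parabolic metric $|x|+|t|^{1/2}$.

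I would first treat the case $l=s$. The $L^2$ bound comes from Plancherel, since the multiplier $\xi_i\xi_j/(i\tau+|\xi|^2)$ is bounded, much as in Lemma \ref{H^-1 to X_T}. The kernel satisfies the parabolic H\"ormander condition by direct Gaussian estimates. The parabolic Calder\'on--Zygmund theory then gives the bound for all $1<s<\infty$.

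For mixed exponents $l\ne s$, I would view $g\mapsto\nabla^2u$ as a convolution in time with the operator-valued kernel $\nabla^2 e^{t\Delta}$ acting on $L^s$. The $L^s(L^s)$ bound just obtained supplies the required $L^{l_0}$ bound in time. The operator-valued kernel satisfies the H\"ormander-type estimate $\|\nabla^2e^{t\Delta}-\nabla^2e^{(t-r)\Delta}\|_{L^s\to L^s}\lesssim r/t^2$ for $t>2r$, using analyticity of the heat semigroup. The vector-valued Calder\'on--Zygmund theorem of Benedek--Calder\'on--Panzone then extends the bound to all $1<l<\infty$. Alternatively one could cite Weis's operator-valued multiplier theorem, using that $L^s$ is UMD.

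\textbf{Step 4 (uniqueness).} Let $(w,\nabla q)$ be the difference of two solutions in this class. It solves the homogeneous Stokes system with zero initial data. Taking the divergence shows that $q$ is harmonic, and $\nabla q\in L^s$ forces $\nabla q=0$. Then $w$ solves the heat equation with zero data, and the uniqueness of tempered solutions of the heat equation gives $w=0$. Restricting back to $(0,T)$ completes the proof.
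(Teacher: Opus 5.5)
Your argument is correct. It is worth knowing that the paper does not prove this lemma at all: it is quoted from Solonnikov. His approach rests on explicit estimates of the fundamental solution (Oseen tensor/Green matrix) of the nonstationary Stokes system and on potential-theoretic bounds for the resulting parabolic integral operators. That machinery is built to handle boundary value problems in domains as well.

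Your route is specific to $\mathbb R^3$. Because $\mathbb P$ is a Fourier multiplier commuting with $e^{t\Delta}$, the pressure decouples as $\nabla p=(I-\mathbb P)f$, which is controlled by Riesz-transform bounds. The velocity then reduces to scalar heat maximal regularity. You obtain that from the parabolic Calder\'on--Zygmund theorem when $l=s$, and extrapolate in time to all $l$ via Benedek--Calder\'on--Panzone (or Weis).

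\emph{What each approach buys.} Yours is a short, self-contained harmonic-analysis proof. It directly covers the mixed exponents $l\neq s$ that the paper actually needs, e.g.\ $L^{3/2}(0,T;L^{9/8})$ in Proposition \ref{pressure estimate}. What it gives up is any extension to domains with boundary: there the Stokes operator is not the Laplacian, and the projection no longer commutes with the semigroup.

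\emph{One point to make explicit in Step 4.} The uniqueness class controls only $\partial_t w$, $\nabla^2 w$ and $\nabla q$. Without the initial condition, $w=(x_2,0,0)$, $\nabla q=0$ would be a nontrivial solution of the homogeneous system. You should therefore say that $w(0)=0$ together with $\partial_t w\in L^l(0,T;L^s)$ gives $w\in C([0,T];L^s)$. This is what makes $w$ tempered and lets the heat-equation uniqueness apply.
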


Finally, we conclude this subsection by giving the following lemma which  plays   a fundamental role in estimating the nonlinear term. Its proof is a simple consequence of the H\"{o}lder and Sobolev inequalities, we omit it here.

\begin{lem}\label{solonnikov estimates-1}
For $u\in L^\infty(0,T;L^2(\mathbb R^3))\cap L^2(0,T;\dot{H}^1(\mathbb R^3))$, we have
\begin{equation*}\label{H_2v_2}
  \|u \nabla u\|_{L^l(0,T;L^s(\mathbb R^3))}\leq C(s,l)(\|u\|_{L^\infty(0,T;L^2(\mathbb R^3))}+\|\nabla u\|_{L^2(0,T;L^2(\mathbb R^3))}),\quad \frac{3}{s}+\frac{2}{l}=4.
\end{equation*}
\end{lem}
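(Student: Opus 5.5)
The estimate follows from Hölder in space, Gagliardo--Nirenberg interpolation, and then Hölder in time. One caveat first: the left-hand side is quadratic in $u$, so the right-hand side must be read as $C(s,l)\big(\|u\|_{L^\infty(0,T;L^2)}+\|\nabla u\|_{L^2(0,T;L^2)}\big)^2$. By scaling, the linear form as printed cannot hold, and I will prove the squared version. The admissible range is $1\le s\le \tfrac32$, $1\le l\le 2$; only the endpoint $(s,l)=(\tfrac32,1)$ requires Sobolev rather than interpolation.

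\textbf{Step 1: Hölder in space.} For a.e. $t$, split the product as
$$\|u\nabla u\|_{L^s}\le \|u\|_{L^r}\|\nabla u\|_{L^2},\qquad \frac1s=\frac1r+\frac12 .$$
We need $2\le r\le 6$ so that $\dot H^1\cap L^2$ controls $L^r$, which forces $1\le s\le \tfrac32$.

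\textbf{Step 2: Gagliardo--Nirenberg in space.} Use the Gagliardo--Nirenberg/Sobolev inequality in $\mathbb R^3$:
$$\|u\|_{L^r}\le C\|u\|_{L^2}^{1-\theta}\|\nabla u\|_{L^2}^{\theta},\qquad \theta=3\Big(\frac12-\frac1r\Big)\in[0,1].$$
Substituting $\frac1r=\frac1s-\frac12$ gives $\theta=3-\frac3s$. Combining with Step 1,
$$\|u\nabla u\|_{L^s}\le C\|u\|_{L^2}^{1-\theta}\|\nabla u\|_{L^2}^{1+\theta}.$$

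\textbf{Step 3: Hölder in time.} Bound $\|u(t)\|_{L^2}$ by its $L^\infty_t$ norm and take the $L^l(0,T)$ norm of the remaining factor. Since $\|\nabla u\|_{L^2}\in L^2(0,T)$, the power $\|\nabla u\|_{L^2}^{1+\theta}$ lies in $L^l(0,T)$ exactly when $(1+\theta)l=2$. This means $\frac2l=1+\theta=4-\frac3s$, which is precisely the scaling relation $\frac3s+\frac2l=4$ in the statement. Hence
$$\|u\nabla u\|_{L^l(0,T;L^s)}\le C\|u\|_{L^\infty(0,T;L^2)}^{1-\theta}\|\nabla u\|_{L^2(0,T;L^2)}^{1+\theta}.$$
Both Hölder steps are exact, so the constant does not depend on $T$.

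\textbf{Step 4: Young's inequality.} Writing $A=\|u\|_{L^\infty L^2}$ and $B=\|\nabla u\|_{L^2L^2}$, we have $A^{1-\theta}B^{1+\theta}\le (A+B)^2$. This gives the claim.

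The only point needing care is the endpoint $\theta=1$, i.e. $(s,l)=(\tfrac32,1)$. There Gagliardo--Nirenberg degenerates into Sobolev, $\|u\|_{L^6}\le C\|\nabla u\|_{L^2}$, and the constant stays finite, so the argument goes through unchanged.
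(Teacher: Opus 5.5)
Your argument is correct. It is the same H\"older-in-space, Sobolev/Gagliardo--Nirenberg, H\"older-in-time computation the paper has in mind when it omits the proof as ``a simple consequence of the H\"older and Sobolev inequalities.'' Your corrections to the statement are also right: by homogeneity the right-hand side must be $C\big(\|u\|_{L^\infty(0,T;L^2)}+\|\nabla u\|_{L^2(0,T;L^2)}\big)^2$, or more precisely $C\|u\|_{L^\infty L^2}^{1-\theta}\|\nabla u\|_{L^2L^2}^{1+\theta}$ with $\theta=3-\frac3s$; and one needs $1\le s\le\frac32$, which covers the later use at $(s,l)=(\frac98,\frac32)$ in the pressure proposition.
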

\medskip

\section{Existence of \crm{global} weak $L^3$-solutions}
\subsection{A priori \crm{estimates} of \crm{global} weak $L^3$-solutions}
To derive the existence of \crm{global} weak $L^3$-solutions to the system \eqref{EQ-tlh}-\eqref{initial value}, the key point is to \crm{establish a priori estimate} of \crm{global} weak $L^3$-solutions, which is the main purpose of this subsection. Let us state the main result of this subsection as follows.

\begin{lem}\label{Energy Estimate}
 \crm{Let the pair $(v,H)$ be a global weak $L^3$-solutions} to problem \eqref{EQ-tlh}-\eqref{initial value} in the sense of Definition \ref{def1}. Then, for any $T\in(0,\infty)$, we have
\begin{align*}
\|(v_2,H_2)\|_{L^\infty(0,T;L^2(\mathbb R^3))}+\|(\nabla v_2,\nabla H_2)\|_{L^2(0,T;L^2(\mathbb R^3))}
\leq CT^{\frac{3}{4}},
\end{align*}
where the pair \crm{$(v_2,H_2)=(v-e^{t\Delta}v_0, H-e^{t\Delta}h_0)$} is defined in Definition \ref{def1}, and $C$ is a fixed constant depending only on $\|v_0\|_{L^3(\mathbb R^3)}, \|h_0\|_{L^3(\mathbb R^3)}$.
\end{lem}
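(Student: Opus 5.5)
The plan is to test the system \eqref{v_2} against $(v_2,H_2)$, i.e. to work directly from the global energy inequality \eqref{global energy inequality} of Definition \ref{def1}(c). The right-hand side will be estimated with the caloric bounds \eqref{v_1} of Remark \ref{remark}, the quadratic terms absorbed, and Gronwall's inequality (Lemma \ref{Gronwall inequality}(b)) applied. Set
$$
y(t)=\tfrac12\|(v_2,H_2)(t)\|_{L^2}^2,\qquad D(t)=\int_0^t\|(\nabla v_2,\nabla H_2)\|_{L^2}^2\,{\rm d}s .
$$
I write $v=v_1+v_2$ and $H=H_1+H_2$ in each of the four integrals of \eqref{global energy inequality}. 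This gives two kinds of terms: terms that are linear in $(v_2,H_2)$, such as $v_1\otimes v_1:\nabla v_2$ and $H_1\otimes H_1:\nabla v_2$, and terms that are quadratic, such as $v_1\otimes v_2:\nabla v_2$ and $H_1\otimes v_2:\nabla H_2$.

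For the quadratic terms I use H\"older's inequality, the interpolation $\|w\|_{L^{10/3}}\le C\|w\|_{L^2}^{2/5}\|\nabla w\|_{L^2}^{3/5}$, and Young's inequality:
$$
\int_0^t\!\!\int |v_1||v_2||\nabla v_2|\le \int_0^t\|v_1\|_{L^5}\|v_2\|_{L^2}^{2/5}\|\nabla v_2\|_{L^2}^{8/5}\le \tfrac18 D(t)+C\int_0^t\|v_1\|_{L^5}^5\,y\,{\rm d}s ,
$$
and the same bound holds for every mixed $v_1,H_1$ combination. For the linear terms I use $\|v_1\|_{L^4}^2\|\nabla v_2\|_{L^2}$ and Young's inequality, which leaves $C\int_0^t\|(v_1,H_1)\|_{L^4}^4\,{\rm d}s$. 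After absorbing $\tfrac14 D(t)$ into the left-hand side, this yields
$$
y(t)+\tfrac12 D(t)\le C\int_0^t\|(v_1,H_1)\|_{L^4}^4{\rm d}s+C\int_0^t\|(v_1,H_1)\|_{L^5}^5\,y\,{\rm d}s .
$$

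The step where the power of $T$ is decided is the estimate of the forcing $a(t)=C\int_0^t\|(v_1,H_1)\|_{L^4}^4$, and this is where I expect the main difficulty. I will control it by the $L^8(0,\infty;L^4)$ bound in \eqref{v_1} together with H\"older's inequality in time; it is important not to lose the time factor here. Since $a$ is increasing and $\int_0^\infty\|(v_1,H_1)\|_{L^5}^5\le C$ by \eqref{v_1}, Lemma \ref{Gronwall inequality}(b) then gives $y(t)+D(t)\le C\,a(t)$ with $C=C(\|(v_0,h_0)\|_{L^3})$. Taking square roots then gives the bound in the lemma.

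The obstacle is reaching precisely $T^{3/4}$. The natural H\"older step gives $\int_0^t\|v_1\|_{L^4}^4\le t^{1/2}\|v_1\|_{L^8L^4}^4$, hence the norms are bounded by $CT^{1/4}$. This dominates $CT^{3/4}$ only for $T\ge1$. By the scaling $u_\lambda(x,t)=\lambda u(\lambda x,\lambda^2t)$, under which $L^3$ is invariant, the power $1/4$ is the natural one. To obtain the statement as worded, I will first try to track the Gronwall constant so that the right-hand side takes the form $C(T^{1/4}+T^{3/4})$. I will then check carefully how the authors intend the case $T<1$. My expectation is that the stated bound $CT^{3/4}$ can be established as written only for $T\ge1$, and that for $T<1$ what this argument gives is $CT^{1/4}$. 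That restriction on $T$ is the point I would verify before relying on the exponent $3/4$.
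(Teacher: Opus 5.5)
Your argument is the paper's argument: expand the global energy inequality \eqref{global energy inequality}, bound the terms quadratic in $(v_2,H_2)$ via $L^5\times L^{10/3}\times L^2$ H\"older plus interpolation and Young, bound the linear terms by $\|(v_1,H_1)\|_{L^4}^2\|(\nabla v_2,\nabla H_2)\|_{L^2}$, and close with Gronwall using $\int_0^\infty\|(v_1,H_1)\|_{L^5}^5\,{\rm d}s\le C$. The only difference is in the forcing term. The paper controls it through $\|v_1\|_{L^4}\le\|v_1\|_{L^3}^{3/8}\|v_1\|_{L^5}^{5/8}$ and H\"older in time rather than through the $L^8L^4$ norm, and it arrives at the same $CT^{1/2}$.

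Your doubt about the exponent is justified, and you can settle it rather than leave it open.

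\begin{itemize}
\item The paper reaches exactly your inequality in \eqref{EE-1}, with forcing $CT^{1/2}$.
\item The $T^{3/2}$ it writes right after invoking Lemma \ref{Gronwall inequality} does not follow. Part (b) with $a(t)=CT^{1/2}$ gives $CT^{1/2}$.
\item That $CT^{1/2}$ is also what the paper's own gradient bound records, and later \eqref{1-4-1} as well.
\end{itemize}

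In fact the bound $CT^{3/4}$, with $C$ depending only on $\|v_0\|_{L^3}$ and $\|h_0\|_{L^3}$, cannot hold for small $T$. The argument is a scaling one.

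\begin{itemize}
\item The class of Definition \ref{def1} is invariant under $v^\lambda(x,t)=\lambda v(\lambda x,\lambda^2t)$, $H^\lambda(x,t)=\lambda H(\lambda x,\lambda^2t)$ and $\Pi^\lambda(x,t)=\lambda^2\Pi(\lambda x,\lambda^2t)$. Under this scaling $v_2^\lambda(x,t)=\lambda v_2(\lambda x,\lambda^2t)$, and the $L^3$ norms of the data are unchanged.
\item Apply the claimed bound to $(v^\lambda,H^\lambda)$ on $(0,S\lambda^{-2})$. Since both norms on the left scale by $\lambda^{-1/2}$, this gives $\|(v_2,H_2)\|_{L^\infty(0,S;L^2)}\le CS^{3/4}\lambda^{-1}$ for every $\lambda>0$.
\item Letting $\lambda\to\infty$ yields $v_2=H_2\equiv0$. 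That forces $\mathbb P(v_1\cdot\nabla v_1-H_1\cdot\nabla H_1)\equiv0$, which fails for generic data.
\end{itemize}

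So the correct statement is the scale-invariant bound $CT^{1/4}$ that your argument proves, and it implies $CT^{3/4}$ only for $T\ge1$. No extra bookkeeping of the Gronwall constant is needed; your planned bound $C(T^{1/4}+T^{3/4})$ would not help for $T<1$ anyway. The later uses of this lemma in the paper only need some $T$-dependent bound.
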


\begin{proof}
To begin with, by using the H\"{o}lder's inequality, Young's inequality and the global energy inequality \eqref{global energy inequality}, one has
\begin{align*}
&\|v_2(\cdot,t)\|^2_{L^2(\mathbb R^3)}+\|H_2(\cdot,t)\|^2_{L^2(\mathbb R^3)}+2\int_0^t(\|\nabla v_2\|^2_{L^2(\mathbb R^3)}+\|\nabla H_2\|^2_{L^2(\mathbb R^3)})\,{\rm d}s\\
&\leq\int_0^t\int_{\mathbb R^3}\Big(v_1\otimes v_2-H_1\otimes H_2+v_1\otimes v_1-H_1\otimes H_1\Big):\nabla v_2\,{\rm d}x{\rm d}s\\
&\quad+\int_0^t\int_{\mathbb R^3}\Big(H_1\otimes v_2-v_1\otimes H_2+H_1\otimes v_1-v_1\otimes H_1\Big):\nabla H_2\,{\rm d}x{\rm d}s\\
&\leq \int_0^t\|v_1\|_{L^5(\mathbb R^3)}\|v_2\|_{L^\frac{10}{3}(\mathbb R^3)}\|\nabla v_2\|_{L^2(\mathbb R^3)}\,{\rm d}s
+\int_0^t\|H_1\|_{L^5(\mathbb R^3)}\|H_2\|_{L^\frac{10}{3}(\mathbb R^3)}\|\nabla v_2\|_{L^2(\mathbb R^3)}\,{\rm d}s\\
&\quad+\int_0^t\|v_1\|^2_{L^4(\mathbb R^3)}\|\nabla v_2\|_{L^2(\mathbb R^3)}\,{\rm d}s
+\int_0^t\|H_1\|^2_{L^4(\mathbb R^3)}\|\nabla v_2\|_{L^2(\mathbb R^3)}\,{\rm d}s\\
&\quad+\int_0^t\|v_1\|_{L^5(\mathbb R^3)}\|H_2\|_{L^\frac{10}{3}(\mathbb R^3)}\|\nabla H_2\|_{L^2(\mathbb R^3)}\,{\rm d}s
+\int_0^t\|H_1\|_{L^5(\mathbb R^3)}\|v_2\|_{L^\frac{10}{3}(\mathbb R^3)}\|\nabla H_2\|_{L^2(\mathbb R^3)}\,{\rm d}s\\
&\quad+2\int_0^t\|H_1\|_{L^4(\mathbb R^3)}\|v_1\|_{L^4(\mathbb R^3)}\|\nabla H_2\|_{L^2(\mathbb R^3)}\,{\rm d}s\\
&\leq C\int_0^t\|v_1\|^2_{L^5(\mathbb R^3)}\|v_2\|^2_{L^\frac{10}{3}(\mathbb R^3)}\,{\rm d}s
+C\int_0^t\|H_1\|^2_{L^5(\mathbb R^3)}\|H_2\|^2_{L^\frac{10}{3}(\mathbb R^3)}\,{\rm d}s\\
&\quad+C\int_0^t\|v_1\|^4_{L^4(\mathbb R^3)}+c\|H_1\|^4_{L^4(\mathbb R^3)}\,{\rm d}s
+C\int_0^t\|v_1\|^2_{L^5(\mathbb R^3)}\|H_2\|^2_{L^\frac{10}{3}(\mathbb R^3)}\,{\rm d}s\\
&\quad+C\int_0^t\|H_1\|^2_{L^5(\mathbb R^3)}\|v_2\|^2_{L^\frac{10}{3}(\mathbb R^3)}\,{\rm d}s
+C\int_0^t\|H_1\|^2_{L^4(\mathbb R^3)}\|v_1\|^2_{L^4(\mathbb R^3)}\,{\rm d}s.
\end{align*}
Utilizing the interpolation inequality for $L^p$-norms
\begin{align*}
\|(v_1,H_1)\|_{L^4(\mathbb R^3)}
\leq\|(v_1,H_1)\|^{\frac{3}{8}}_{L^3(\mathbb R^3)}\|(v_1,H_1)\|^{\frac{5}{8}}_{L^5(\mathbb R^3)},
\end{align*}
\begin{align*}
\|(v_2,H_2)\|_{L^\frac{10}{3}(\mathbb R^3)}
\leq\|(v_2,H_2)\|^{\frac{2}{5}}_{L^2(\mathbb R^3)}\|(\nabla v_2,\nabla H_2\|^{\frac{3}{5}}_{L^2(\mathbb R^3)},
\end{align*}
and the Young's inequality, we infer that
\begin{align}\label{EE-1}
\begin{split}
&\|v_2(\cdot,t)\|^2_{L^2(\mathbb R^3)}+\|H_2(\cdot,t)\|^2_{L^2(\mathbb R^3)}+2\int_0^t(\|\nabla v_2\|^2_{L^2(\mathbb R^3)}+\|\nabla H_2\|^2_{L^2(\mathbb R^3)})\,{\rm d}s\\
&\leq C\int_0^t(\|v_1\|^5_{L^5(\mathbb R^3)}+\|H_1\|^5_{L^5(\mathbb R^3)})(\|v_2\|^2_{L^2(\mathbb R^3)}+\|H_2\|^2_{L^2(\mathbb R^3)})\,{\rm d}s\\
&+CT^{\frac{1}{2}}\Big(\|v_1\|^{\frac{3}{2}}_{L^{\infty}(0,T;L^3(\mathbb R^3))}\|v_1\|^{\frac{5}{2}}_{L^{5}(0,T;L^5(\mathbb R^3))}+\|H_1\|^{\frac{3}{2}}_{L^{\infty}(0,T;L^3(\mathbb R^3))}\|H_1\|^{\frac{5}{2}}_{L^{5}(0,T;L^5(\mathbb R^3))}\\
&+\|v_1\|^{\frac{3}{4}}_{L^{\infty}(0,T;L^3(\mathbb R^3))}\|H_1\|^{\frac{3}{4}}_{L^{\infty}(0,T;L^3(\mathbb R^3))}\|v_1\|^{\frac{5}{4}}_{L^{5}(0,T;L^5(\mathbb R^3))}\|H_1\|^{\frac{5}{4}}_{L^{5}(0,T;L^5(\mathbb R^3))}\Big)\\&
\leq C\int_0^t(\|v_1\|^5_{L^5(\mathbb R^3)}+\|H_1\|^5_{L^5(\mathbb R^3)})(\|v_2\|^2_{L^2(\mathbb R^3)}+\|H_2\|^2_{L^2(\mathbb R^3)})\,{\rm d}s\\
&+CT^{\frac{1}{2}}\left(\|v_0\|_{L^3(\mathbb R^3)}^4+\|h_0\|_{L^3(\mathbb R^3)}^4\right)
\end{split}
\end{align}
From the Lemma \ref{Gronwall inequality} and \eqref{v_1}, one has for any $t\in(0,T)$
\crm{\begin{align*}
\|v_2(\cdot,t)\|^2_{L^2(\mathbb R^3)}+\|H_2(\cdot,t)\|^2_{L^2(\mathbb R^3)}&\leq C\|(v_0,h_0)\|^4_{L^3(\mathbb R^3)}T^{\frac{3}{2}}
\exp\Big(\int_0^T\big(\|v_1\|^5_{L^5(\mathbb R^3)}+\|H_1\|^5_{L^5(\mathbb R^3)}\,{\rm d}s\big)\Big)\\
&\leq CT^{\frac{3}{2}}
\end{align*}
with some constant $C$  depending only on $\|v_0\|_{L^3(\mathbb R^3)}, \|h_0\|_{L^3(\mathbb R^3)}$.}
From this and inequality \eqref{EE-1}, it is clear that
\begin{align*}
\int_0^T\left(\|\nabla v_2(\cdot,s)\|^2_{L^2(\mathbb R^3)}+\|\nabla H_2(\cdot,s)\|^2_{L^2(\mathbb R^3)}\right)\,{\rm d}s\leq C(\|v_0\|_{L^3(\mathbb R^3)}, \|h_0\|_{L^3(\mathbb R^3)})T^{\frac{1}{2}},
\end{align*}
\crm{as required.}
\end{proof}
\medskip

\crm{As a by-product of this lemma}, we further obtain by invoking Lemma \ref{Solonnikove estimates} and Lemma \ref{solonnikov estimates-1}:
\begin{prop}\label{pressure estimate}
Let $(v_2,H_2,\Pi)$ be defined in Definition \ref{def1}, and it can be decomposed as follows
\begin{equation*}
\begin{aligned}
(v_2,H_2,\Pi)=\sum_{i=1}^{4}(v^i_2,H^i_2,\Pi^i).
\end{aligned}
\end{equation*}
Here, $(v^i_2,H^i_2,\Pi^i)~(i=1,2,3,4)$ \crm{satisfies} the following equations in $\mathbb{R}^{3}\times(0,\infty)$ respectively
\begin{equation*}
\left\{
\begin{array}{l}
\begin{aligned}
&\partial_t v^1_2-\Delta v^1_2+\nabla \Pi^1=H_2\nabla H_2-v_2\nabla v_2,\\
&\partial_t H^1_2-\Delta H^1_2=H_2\nabla v_2-v_2\nabla H_2,\\
&v^1_2(x,0)=H^1_2(x,0)=0,\\
&\div v^1_2=\div H^1_2=0,
\end{aligned}
\end{array} \right.
\end{equation*}
\begin{equation*}
\left\{
\begin{array}{l}
\begin{aligned}
&\partial_t v^2_2-\Delta v^2_2+\nabla \Pi^2=H_2\nabla H_1-v_2\nabla v_1,\\
&\partial_t H^2_2-\Delta H^2_2=H_2\nabla v_1-v_2\nabla H_1,\\
&v^2_2(x,0)=H^2_2(x,0)=0,\\
&\div v^2_2=\div H^2_2=0,
\end{aligned}
\end{array} \right.
\end{equation*}
\begin{equation*}
\left\{
\begin{array}{l}
\begin{aligned}
&\partial_t v^3_2-\Delta v^3_2+\nabla \Pi^3=H_1\nabla H_2-v_1\nabla v_2,\\
&\partial_t H^3_2-\Delta H^3_2=H_1\nabla v_2-v_1\nabla H_2,\\
&v^3_2(x,0)=H^3_2(x,0)=0,\\
&\div v^3_2=\div H^3_2=0,
\end{aligned}
\end{array} \right.
\end{equation*}
\begin{equation*}
\left\{
\begin{array}{l}
\begin{aligned}
&\partial_t v^4_2-\Delta v^4_2+\nabla \Pi^4=H_1\nabla H_1-v_1\nabla v_1,\\
&\partial_t H^4_2-\Delta H^4_2=H_1\nabla v_1-v_1\nabla H_1,\\
&v^4_2(x,0)=H^4_2(x,0)=0,\\
&\div v^4_2=\div H^4_2=0.
\end{aligned}
\end{array} \right.
\end{equation*}
 Then for any $T\in (0,+\infty)$, we have
\begin{equation*}
\begin{aligned}
&(v^1_2,H^1_2)\in W^{1,\frac{3}{2}}(0,T;W^{2,\frac{9}{8}}(\mathbb R^3)),\quad \nabla\Pi^1\in L^{\frac{3}{2}}(0,T;L^{\frac{9}{8}}(\mathbb R^3)),\\
&(v^2_2,H^2_2)\in W^{1,\frac{3}{2}}(0,T;W^{2,\frac{4}{3}}(\mathbb R^3)),\quad \nabla\Pi^2\in L^{\frac{3}{2}}(0,T;L^{\frac{4}{3}}(\mathbb R^3)),\\
&(v^3_2,H^3_2)\in W^{1,\frac{3}{2}}(0,T;W^{2,\frac{6}{5}}(\mathbb R^3)),\quad \nabla\Pi^3\in L^{\frac{3}{2}}(0,T;L^{\frac{6}{5}}(\mathbb R^3)),\\
&(v^4_2,H^4_2)\in W^{1,\frac{3}{2}}(0,T;W^{2,\frac{3}{2}}(\mathbb R^3)),\quad \nabla\Pi^4\in L^{\frac{3}{2}}(0,T;L^{\frac{3}{2}}(\mathbb R^3)).
\end{aligned}
\end{equation*}
Moreover, $\Pi^1,\Pi^2, \Pi^3,\Pi^4$ fulfill the following Poincare type inequality respectively:
\begin{equation}\label{Pi}
\begin{aligned}
&\int_{0}^{T}\int_{B(x_0,R)}\Big|\Pi^1-[\Pi^1]_{B(x_0,R)}\Big|^{\frac{3}{2}}\,{\rm d}x{\rm d}t\leq CR^\frac{1}{2}\int_{0}^{T}\Big(\int_{B(x_0,R)}|\nabla\Pi^1|^\frac{9}{8}dx\Big)^\frac{4}{3}\,{\rm d}t,\\		&\int_{0}^{T}\int_{B(x_0,R)}\Big|\Pi^2-[\Pi^2]_{B(x_0,R)}\Big|^{\frac{3}{2}}\,{\rm d}x{\rm d}t\leq CR^\frac{9}{8}\int_{0}^{T}\Big(\int_{B(x_0,R)}|\nabla\Pi^2|^\frac{4}{3}dx\Big)^\frac{9}{8}\,{\rm d}t,\\		&\int_{0}^{T}\int_{B(x_0,R)}\Big|\Pi^3-[\Pi^3]_{B(x_0,R)}\Big|^{\frac{3}{2}}\,{\rm d}x{\rm d}t\leq CR^\frac{3}{4}\int_{0}^{T}\Big(\int_{B(x_0,R)}|\nabla\Pi^3|^\frac{6}{5}dx\Big)^\frac{5}{4}\,{\rm d}t,\\
				&\int_{0}^{T}\int_{B(x_0,R)}\Big|\Pi^4-[\Pi^4]_{B(x_0,R)}\Big|^{\frac{3}{2}}\,{\rm d}x{\rm d}t\leq CR^\frac{3}{2}\int_{0}^{T}\int_{B(x_0,R)}|\nabla\Pi^4|^\frac{3}{2}\,{\rm d}x{\rm d}t.
\end{aligned}
\end{equation}
Here, $[\Pi^i]_{B(x_0,R)}~(i=1,2,3,4)$ denotes the average of $\Pi^i$ over $B(x_0,R)$, $C$ is \crm{a constant depending only on $\|v_0\|_{L^3(\mathbb R^3)}, \|h_0\|_{L^3(\mathbb R^3)},T$}.
\end{prop}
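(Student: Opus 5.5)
The plan is to treat each of the four systems separately by Lemma \ref{Solonnikove estimates}. For each one we choose exponents $(l,s)$ so that the right-hand side lies in $L^l(0,T;L^s(\mathbb R^3))$. The induction-equation component $H^i_2$ is handled the same way: it solves a heat equation with zero pressure, to which Lemma \ref{Solonnikove estimates} still applies, since the heat equation is the Stokes system with divergence-free forcing (or one may use classical maximal regularity for the heat equation directly). The inputs are the bounds from Lemma \ref{Energy Estimate}, namely $(v_2,H_2)\in L^\infty(0,T;L^2)\cap L^2(0,T;\dot H^1)$, which give $L^{10/3}_{x,t}$, $L^{\infty}_tL^2_x$, $L^2_tL^6_x$, and $\nabla(v_2,H_2)\in L^2_{x,t}$. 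For $(v_1,H_1)$ we use \eqref{v_1}, namely $L^\infty_tL^3_x$, $L^5_{x,t}$ and $L^8_tL^4_x$. We also need $\nabla v_1\in L^{l}_tL^{s}_x$ with appropriate exponents, which comes from heat-kernel smoothing: $\|\nabla e^{t\Delta}v_0\|_{L^s}\le Ct^{-1/2-3/2(1/3-1/s)}\|v_0\|_{L^3}$. Time integrability on $(0,T)$ then follows when the resulting power is less than $1$, or, for critical exponents, from the maximal-regularity/Besov characterization of $L^3$ data.

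\emph{The four right-hand sides.}
\begin{itemize}
\item For $i=1$, Lemma \ref{solonnikov estimates-1} with $l=3/2$ and $3/s=4-4/3$, i.e. $s=9/8$, gives $v_2\nabla v_2$, $H_2\nabla H_2$, etc.\ in $L^{3/2}(0,T;L^{9/8})$.
\item For $i=3$, we write $v_1\nabla v_2$ and use Hölder with $v_1\in L^6_tL^{x}$ interpolated from $L^\infty_tL^3$ and $L^5_{x,t}$, and $\nabla v_2\in L^2_{x,t}$. Choosing $v_1\in L^{6}_tL^{3}$ via $L^\infty_tL^3$ already gives $L^2_tL^{6/5}_x\subset L^{3/2}_tL^{6/5}$ on finite $T$.
\item For $i=2$, we use the divergence-free structure to write $v_2\cdot\nabla v_1$, put $v_2\in L^\infty_tL^2$, and need $\nabla v_1\in L^{3/2}_tL^{4}_x$. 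The heat estimate gives $\|\nabla v_1\|_{L^4}\le Ct^{-3/4}$, whose $3/2$ power $t^{-9/8}$ is not integrable. So instead we put $v_2\in L^{p}_tL^{q}_x$ by interpolating $L^\infty L^2$ and $L^2L^6$, pairing so that $1/q+1/s=3/4$ and the time power is integrable. For instance, $v_2\in L^{6}_tL^{18/7}$ together with $\nabla v_1\in L^2_tL^{9/2}$ (the critical maximal-regularity bound for $e^{t\Delta}$ on $\dot B^{-1+3/s}_{s,2}\supset L^3$) should give $L^{3/2}_tL^{4/3}_x$.
\item For $i=4$, $v_1\nabla v_1$ lies in $L^{3/2}_tL^{3/2}$ by pairing $v_1\in L^\infty_tL^3$ with $\nabla v_1\in L^{3/2}_tL^3$ on $(0,T)$; the latter needs care near $t=0$, and we would instead use $v_1\in L^5L^5$ and $\nabla v_1\in L^{15/7}_tL^{15/7}$-type bounds. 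Alternatively, rewrite $v_1\nabla v_1=\div(v_1\otimes v_1)$ and $v_1\otimes v_1\in L^{5/2}_{x,t}$, so that the solution operator handles the divergence form.
\end{itemize}

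\emph{Regularity and pressure bounds.} Lemma \ref{Solonnikove estimates} then gives $\partial_t,\nabla^2$ of $(v^i_2,H^i_2)$ and $\nabla\Pi^i$ in the stated $L^{3/2}_tL^{s}_x$ spaces, with $s=9/8,4/3,6/5,3/2$. Together with the energy bounds this yields the stated $W^{1,3/2}(0,T;W^{2,s})$ membership. Uniqueness of the Stokes solution ensures that the sum of the four pieces reproduces $(v_2,H_2,\Pi)$, since the difference solves a homogeneous Stokes/heat system in the energy class with zero data. The Poincaré inequalities \eqref{Pi} then follow for a.e.\ $t$ from Lemma \ref{sobolev-poincare inequality} on $B(x_0,R)$ with $p=s$ and $q=3/2\le 3s/(3-s)$, which holds for each $s\ge 1$. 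Scaling gives the factor $R^{3(2/3-1/s)+ \dots}$. Concretely, by Hölder on the ball, $\|\Pi-[\Pi]\|_{L^{3/2}(B_R)}\le CR^{3(2/3)-3/s+1}\|\nabla\Pi\|_{L^s(B_R)}$. Raising this to the power $3/2$ yields the powers $R^{1/2},R^{9/8},R^{3/4},R^{3/2}$ and the exponents $\frac{3}{2s}$ on $\int|\nabla\Pi|^s$, as stated. Integrating in $t$ completes the argument.

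\emph{Main obstacle.} I expect the hardest step to be the terms involving $\nabla v_1,\nabla H_1$ (cases $i=2,4$), where $L^3$ data give only critical, borderline-integrable derivative bounds near $t=0$. The first approach I would try is to rewrite these terms in divergence form, e.g.\ $v_2\cdot\nabla v_1=\div(v_1\otimes v_2)$ using $\div v_2=0$, and to estimate the Stokes solution with a divergence-form right-hand side. Failing that, I would rely on the critical Besov maximal-regularity bounds for $e^{t\Delta}v_0$ and $e^{t\Delta}h_0$.
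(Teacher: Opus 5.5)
Case $i=2$ does not go through as written. You discard the route that works because of an arithmetic slip, and the replacement you propose fails. By your own formula, $\|\nabla v_1(t)\|_{L^4}\le Ct^{-\frac12-\frac32(\frac13-\frac14)}\|v_0\|_{L^3}=Ct^{-5/8}\|v_0\|_{L^3}$, not $Ct^{-3/4}$. So the $\frac32$-power $t^{-15/16}$ is integrable, and $\|\nabla v_1\|_{L^{3/2}(0,T;L^4)}\le CT^{1/24}\|v_0\|_{L^3}$. H\"older with $v_2\in L^\infty(0,T;L^2)$ then puts $v_2\cdot\nabla v_1$, $H_2\cdot\nabla H_1$, etc.\ into $L^{3/2}(0,T;L^{4/3})$, which is exactly the paper's argument. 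Your substitute breaks in two places:
\begin{itemize}
\item The exponents do not pair: $\frac{7}{18}+\frac29=\frac{11}{18}\ne\frac34$, so $L^{18/7}_x\cdot L^{9/2}_x$ lands in $L^{18/11}_x$, not $L^{4/3}_x$.
\item The bound $\nabla e^{t\Delta}v_0\in L^2(0,T;L^{9/2})$ is false for general $v_0\in L^3$. Take divergence-free data behaving like $|x|^{-a}$ near the origin with $\frac23\le a<1$, which is still in $L^3$. Then $\|\nabla v_1(t)\|^2_{L^{9/2}}\sim t^{-a-\frac13}$, which is not integrable at $t=0$, so no maximal-regularity or Besov argument can supply it.
\end{itemize}
The divergence-form fallback $v_2\cdot\nabla v_1=\div(v_2\otimes v_1)$ is not a substitute either. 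With a divergence-form right-hand side, the coercive estimate controls only $\nabla v^2_2$ and $\Pi^2$. It does not give the bounds $\partial_tv^2_2,\nabla^2v^2_2,\nabla\Pi^2\in L^{3/2}(0,T;L^{4/3})$ that the statement asserts.

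The rest agrees with the paper. Cases $i=1,3$ use the same H\"older and Lemma \ref{solonnikov estimates-1} bounds. Your rescaled Sobolev--Poincar\'e bound $\|\Pi^i-[\Pi^i]_{B(x_0,R)}\|_{L^{3/2}(B(x_0,R))}\le CR^{3-3/s}\|\nabla\Pi^i\|_{L^s(B(x_0,R))}$, raised to the power $\frac32$, correctly reproduces all four lines of \eqref{Pi}. For $i=4$ your first pairing needs no extra care: $\|\nabla v_1(t)\|_{L^3}\le Ct^{-1/2}\|v_0\|_{L^3}$ gives $\|\nabla v_1\|_{L^{3/2}(0,T;L^3)}\le CT^{1/6}\|v_0\|_{L^3}$, as in the paper. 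The alternative $\nabla v_1\in L^{15/7}_{x,t}$ is not available, because heat smoothing of $L^3$ data never yields $L^q_x$ control with $q<3$. Finally, for $i=2,3$ the induction forcing is not divergence-free on its own; only the sum $\curl(v_1\times H_2)+\curl(v_2\times H_1)$ is. So bound $H^2_2$ and $H^3_2$ by heat maximal regularity directly, as your parenthetical allows, rather than treating them as Stokes solutions.
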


\begin{proof}
 The main tool to prove this proposition is Lemma \ref{Solonnikove estimates}. For $v^1_2$, \crm{from the Lemma \ref{Energy Estimate}, Lemma \ref{solonnikov estimates-1} and Lemma \ref{Solonnikove estimates}}, one has
			\begin{equation*}
				\begin{aligned}
					&\|\partial_t v^1_2\|_{L^\frac{3}{2}(0,T;L^\frac{9}{8}(\mathbb R^3))}+\|\nabla^2 v^1_2\|_{L^\frac{3}{2}(0,T;L^\frac{9}{8}(\mathbb R^3))}+\|\nabla\Pi^1\|_{L^\frac{3}{2}(0,T;L^\frac{9}{8}(\mathbb R^3))}\\
					&\leq C\|H_2\nabla H_2-v_2\nabla v_2\|_{L^\frac{3}{2}(0,T;L^\frac{9}{8}(\mathbb R^3))}\\
&\crm{\leq C\|H_2\|_{L^{\infty}(0,T;L^2(\R^3))}\|\nabla H_2\|^{2/3}_{L^{2}(0,T;L^2(\R^3))}}\\&
\crm{\leq CT^{5/4}.}
				\end{aligned}
			\end{equation*}
Now we turn to the estimate of $v^2_2$. In fact, by the known estimate of the heat potential, one has
\begin{align*}
\begin{split}
\|\nabla v_1\|_{L^4(\mathbb R^3)}\leq Ct^{-\frac{5}{8}}\|v_0\|_{L^3(\mathbb R^3)},\quad
\|\nabla H_1\|_{L^4(\mathbb R^3)}\leq Ct^{-\frac{5}{8}}\|h_0\|_{L^3(\mathbb R^3)}.
\end{split}
\end{align*}
This, along with the Lemma \ref{Energy Estimate}, H\"{o}lder's inequality, yields
\begin{equation*}
\begin{aligned}
&\|H_2\nabla H_1-v_2\nabla v_1\|_{L^\frac{3}{2}(0,T;L^\frac{4}{3}(\mathbb R^3))}\\
&\leq \|v_2\|_{L^\infty(0,T;L^2(\mathbb R^3))}
 \|\nabla v_1\|_{L^\frac{3}{2}(0,T;L^4(\mathbb R^3))}
 +\|H_2\|_{L^\infty(0,T;L^2(\mathbb R^3))}
 \|\nabla H_1\|_{L^\frac{3}{2}(0,T;L^4(\mathbb R^3))}\\
&\leq C\|v_2\|_{L^\infty(0,T;L^2(\mathbb R^3))}\Big\|t^{-\frac{5}{8}}\|v_0\|_{L^3(\mathbb R^3)}\Big\|_{L^\frac{3}{2}(0,T)} +C\|H_2\|_{L^\infty(0,T;L^2(\mathbb R^3))}\Big\|t^{-\frac{5}{8}}\|H_0\|_{L^3(\mathbb R^3)}\Big\|_{L^\frac{3}{2}(0,T)}\\
&\leq C\|v_2\|_{L^\infty(0,T;L^2(\mathbb R^3))}T^\frac{1}{24}\|v_0\|_{L^3(\mathbb R^3)}
+C\|H_2\|_{L^\infty(0,T;L^2(\mathbb R^3))}T^\frac{1}{24}\|h_0\|_{L^3(\mathbb R^3)}\\
&\leq C(\|v_0\|_{L^3(\mathbb R^3)},\|h_0\|_{L^3(\mathbb R^3)})T^\frac{7}{24},
\end{aligned}
\end{equation*}
which further implies by Lemma \ref{Solonnikove estimates}
\begin{equation*}
\begin{aligned}
&\|\partial_t v^2_2\|_{L^\frac{3}{2}(0,T;L^\frac{4}{3}(\mathbb R^3))}+\|\nabla^2 v^2_2\|_{L^\frac{3}{2}(0,T;L^\frac{4}{3}(\mathbb R^3))}+\|\nabla\Pi^2\|_{L^\frac{3}{2}(0,T;L^\frac{4}{3}(\mathbb R^3))}\\
&\leq C\|H_2\nabla H_1-v_2\nabla v_1\|_{L^\frac{3}{2}(0,T;L^\frac{4}{3}(\mathbb R^3))}\\
&\leq C(\|v_0\|_{L^3(\mathbb R^3)},\|h_0\|_{L^3(\mathbb R^3)})T^\frac{7}{24}.
\end{aligned}
\end{equation*}
For $v^3_2$, by using the Lemma \ref{Energy Estimate} and H\"{o}lder's inequality, one has
\begin{equation*}
\begin{aligned}
&\|H_1\nabla H_2-v_1\nabla v_2\|_{L^{\frac{3}{2}}(0,T;L^{\frac{6}{5}}(\mathbb R^3))}\\&
\leq \|v_1\|_{L^\infty(0,T;L^3(\mathbb R^3))}\|\nabla v_2\|_{L^{\frac{3}{2}}(0,T;L^2(\mathbb R^3))}+\|H_1\|_{L^\infty(0,T;L^3(\mathbb R^3))}|\nabla H_2\|_{L^{\frac{3}{2}}(0,T;L^2(\mathbb R^3))}\\
&\leq \|v_1\|_{L^\infty(0,T;L^3(\mathbb R^3))}\|\nabla v_2\|_{L^2(0,T;L^2(\mathbb R^3))}T^\frac{1}{6}
+\|H_1\|_{L^\infty(0,T;L^3(\mathbb R^3))}|\nabla H_2\|_{L^2(0,T;L^2(\mathbb R^3))}T^\frac{1}{6}\\
&\leq C(\|v_0\|_{L^3(\mathbb R^3)},\|h_0\|_{L^3(\mathbb R^3)})T^\frac{2}{3}.
\end{aligned}
\end{equation*}
This, along with Lemma \ref{Solonnikove estimates}, derives,
			\begin{equation*}
				\begin{aligned}
					&\|\partial_t v^3_2\|_{L^{\frac{3}{2}}(0,T;L^{\frac{6}{5}}(\mathbb R^3))}+\|\nabla^2 v^3_2\|_{L^{\frac{3}{2}}(0,T;L^{\frac{6}{5}}(\mathbb R^3))}+\|\nabla\Pi^3\|_{L^{\frac{3}{2}}(0,T;L^{\frac{6}{5}}(\mathbb R^3))}\\
					&\leq C(\|v_0\|_{L^3(\mathbb R^3)},\|h_0\|_{L^3(\mathbb R^3)})T^\frac{2}{3}.
				\end{aligned}
			\end{equation*}
For $v^4_2$, by using H\"{o}lder's inequality and \eqref{v_1}, one has
\begin{equation*}
\begin{aligned}
&\|H_1\nabla H_1-v_1\nabla v_1\|_{L^\frac{3}{2}(0,T;L^\frac{3}{2}(\mathbb R^3))}\\
&\leq \|v_1\nabla v_1\|_{L^\frac{3}{2}(0,T;L^\frac{3}{2}(\mathbb R^3))}
+\|H_1\nabla H_1\|_{L^\frac{3}{2}(0,T;L^\frac{3}{2}(\mathbb R^3))}\\
&\leq \|v_1\|_{L^\infty(0,T;L^3(\mathbb R^3))}\|\nabla v_1\|_{L^\frac{3}{2}(0,T;L^3(\mathbb R^3))}
+\|H_1\|_{L^\infty(0,T;L^3(\mathbb R^3))}\|\nabla H_1\|_{L^\frac{3}{2}(0,T;L^3(\mathbb R^3))}\\
&\leq C\|v_0\|^2_{L^3(\mathbb R^3)}\|t^{-\frac{1}{2}}\|_{L^\frac{3}{2}(0,T)}+c\|h_0\|^2_{L^3(\mathbb R^3)})\|t^{-\frac{1}{2}}\|_{L^\frac{3}{2}(0,T)}\\
&\leq C(\|v_0\|^2_{L^3(\mathbb R^3)}+\|h_0\|^2_{L^3(\mathbb R^3)})T^\frac{1}{6},
\end{aligned}
\end{equation*}
which implies,
			\begin{equation*}
				\begin{aligned}
					&\|\partial_t v^4_2\|_{L^\frac{3}{2}(0,T;L^\frac{3}{2}(\mathbb R^3))}+\|\nabla^2 v^4_2\|_{L^\frac{3}{2}(0,T;L^\frac{3}{2}(\mathbb R^3))}+\|\nabla\Pi^4\|_{L^\frac{3}{2}(0,T;L^\frac{3}{2}(\mathbb R^3))}\\
					&\leq C\|H_1\nabla H_1-v_1\nabla v_1\|_{L^\frac{3}{2}(0,T;L^\frac{3}{2}(\mathbb R^3))}\leq C(\|v_0\|^2_{L^3(\mathbb R^3)}+\|h_0\|^2_{L^3(\mathbb R^3)})T^\frac{1}{6}.
				\end{aligned}
			\end{equation*}
		Combing the above estimates and the Lemma \ref{sobolev-poincare inequality}, we immediately derive \eqref{Pi}.
		\end{proof}

\subsection{The existence of approximate solutions}
	In this subsection, we are going to prove the existence of \crm{global} weak $L^3$-solutions of \eqref{EQ-tlh}-\eqref{initial value} which  is equivalent to  gain  the existence of energy weak solutions (usually called as Leray-Hopf weak solution) for \eqref{v_2}.
	To achieve this goal,  we first smooth the nonlinear term of \eqref{v_2} to obtain its approximate system
\begin{equation}\label{smooth equation}
	\left\{
	\begin{array}{l}
		\begin{aligned}
			&\partial_{t}v_2-\Delta v_2+\nabla \Pi+(\eta_{\epsilon}\ast v_2)\cdot\nabla v_2+v_2\cdot\nabla v_1+ v_1\cdot\nabla v_2-(\eta_{\epsilon}\ast H_2)\cdot\nabla H_2\\
			&\quad-H_2\cdot\nabla H_1- H_1 \cdot\nabla H_2+ v_1\cdot\nabla v_1-H_1\cdot\nabla H_1=0,\\
			&\partial_{t}H_2-\Delta H_2+(\eta_{\epsilon}\ast v_2)\cdot\nabla H_2+ v_2\cdot\nabla H_1+v_1\cdot\nabla H_2-(\eta_{\epsilon}\ast H_2)\cdot\nabla v_2\\
			&\quad-H_2\cdot\nabla v_1-H_1\cdot\nabla v_2+ v_1\cdot\nabla H_1-H_1\cdot\nabla v_1=0,\\
&v_2(x,0)=H_2(x,0)=0,
		\end{aligned}
	\end{array} \right.
\end{equation}
\crm{where $\eta_\epsilon=\epsilon^{-3}\eta\left(\frac{x}{\epsilon}\right)$ is a standard spatial mollifier with $\eta\in C_{0}^{\infty}(\R^{3})$ and $\int_{\R^3}\eta dx=1$.  We then seek} the solution pair $(v_{2\epsilon},H_{2\epsilon})$, usually called the approximate solution,  to the above approximate system by using Lemma \ref{fixed point thm}.
Subsequently, the standard energy method shows that the solution pair $(v_{2\epsilon},H_{2\epsilon})$ is bounded in $L^\infty(0,T;L^2(\mathbb R^3))\cap L^2(0,T;\dot{H}^1(\mathbb R^3))$ uniformly in $\epsilon$. For simplicity, we write the Banach space $X$  as
\begin{equation*}
	\begin{aligned}
		X_{T}= L^\infty(0,T;L^2(\mathbb R^3))\cap L^2(0,T;\dot{H}^1(\mathbb R^3))
	\end{aligned}
\end{equation*}
and define the norm as
\begin{equation*}
	\begin{aligned}
		\|(v,H)\|_{X_T}=\|v\|_{X_{T}}+\|H\|_{X_{T}}.
	\end{aligned}
	\end{equation*}
\crm{Here and in what follows, we always suppose $T<\infty$.}

\medskip

By applying Lemma \ref{fixed point thm}, \crm{we are now} in a position to   construct  a solution to  the approximate system \eqref{smooth equation}.
\begin{thm}\label{TH3.3}
For any $\epsilon>0$, the approximate system \eqref{smooth equation} admits a solution pair $(v_{2\epsilon},H_{2\epsilon})\in X_{T}$ for any $T\in(0,\infty)$ such that
\begin{align}\label{1-4-1}
\|(v_{2\epsilon},H_{2\epsilon})\|^2_{L^\infty(0,T;L^2(\mathbb R^3)) }+\|(v_{2\epsilon},H_{2\epsilon})\|^2_{L^2(0,T;\dot{H}^{1}(\mathbb R^3))}\leq C(\|v_0\|_{L^3(\mathbb R^3)},\|h_0\|_{L^3(\mathbb R^3)})\sqrt{T}.
\end{align}
\end{thm}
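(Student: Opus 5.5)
We will obtain the solution in two stages: first on a short time interval via Lemma \ref{fixed point thm}, then globally on $(0,T)$ by an a priori energy bound and continuation. Applying the Leray projection $\mathbb P$ and Duhamel's formula, \eqref{smooth equation} is equivalent to the integral system
\[
(v_2,H_2)=\mathcal B\big((v_2,H_2),(v_2,H_2)\big)+\mathcal L(v_2,H_2)+\mathcal R ,
\]
where $\mathcal B$ collects the mollified quadratic terms $(\eta_\epsilon\ast v_2)\cdot\nabla v_2$, $(\eta_\epsilon\ast H_2)\cdot\nabla H_2$, and so on. The operator $\mathcal L$ collects the terms linear in $(v_2,H_2)$ with coefficients $v_1,H_1$, and $\mathcal R$ is the Duhamel integral of $\div(v_1\otimes v_1-H_1\otimes H_1)$ and its magnetic analogue. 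All terms are written in divergence form (using $\div v_1=\div v_2=0$, etc.), so each is $\int_0^t W_{t-s}\ast\mathbb P\,\div F\,{\rm d}s$ with $\|\div F\|_{\dot H^{-1}}\le\|F\|_{L^2}$. Lemma \ref{H^-1 to X_T} then reduces every estimate in $X_{T_0}$ to an $L^2(0,T_0;L^2)$ bound on the tensor $F$.

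\textbf{Bounds on the three pieces.} For the bilinear part, Young's inequality gives $\|\eta_\epsilon\ast u\|_{L^\infty}\le C\epsilon^{-3/2}\|u\|_{L^2}$, so
\[
\|(\eta_\epsilon\ast u)\otimes w\|_{L^2(0,T_0;L^2)}\le C\epsilon^{-3/2}T_0^{1/2}\|u\|_{X_{T_0}}\|w\|_{X_{T_0}} .
\]
Thus $c_1=C\epsilon^{-3/2}T_0^{1/2}$. For the linear part,
\[
\|v_1\otimes u\|_{L^2L^2}\le\|v_1\|_{L^5(0,T_0;L^5)}\|u\|_{L^{10/3}(0,T_0;L^{10/3})}\le C\|v_1\|_{L^5(\R^3\times(0,T_0))}\|u\|_{X_{T_0}} .
\]
By Remark \ref{remark}, $\|(v_1,H_1)\|_{L^5(\R^3\times(0,T_0))}\to0$ as $T_0\to0$ (absolute continuity of the integral), so $c_2<1/2$ for small $T_0$. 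For the forcing, $\|\mathcal R\|_{X_{T_0}}\le C\|v_1\|^2_{L^4(0,T_0;L^4)}+\ldots\le CT_0^{1/4}\|(v_0,h_0)\|^2_{L^3}$ by the $L^8L^4$ bound. Hence \eqref{eq1.4} holds for $T_0=T_0(\epsilon,v_0,h_0)$ small, and Lemma \ref{fixed point thm} yields a solution in $X_{T_0}$.

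\textbf{Energy estimate.} Because of the mollification, the solution is regular enough ($(\eta_\epsilon\ast v_2)\in L^\infty$) to justify testing with $(v_2,H_2)$; continuity in time follows from Lemma \ref{3-D ns equations}. The key cancellations are
\[
\int(\eta_\epsilon\ast v_2)\cdot\nabla v_2\cdot v_2=0,\qquad
\int(\eta_\epsilon\ast v_2)\cdot\nabla H_2\cdot H_2=0,
\]
\[
\int(\eta_\epsilon\ast H_2)\cdot\nabla H_2\cdot v_2+\int(\eta_\epsilon\ast H_2)\cdot\nabla v_2\cdot H_2=0 .
\]
The same cancellations hold for the $v_1\cdot\nabla$ and $H_1\cdot\nabla$ transport terms. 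What remains is exactly the right-hand side of \eqref{global energy inequality}, now as an equality. Repeating the Gronwall computation in the proof of Lemma \ref{Energy Estimate} verbatim gives \eqref{1-4-1} with a constant independent of $\epsilon$ and of the interval length.

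\textbf{Extension to arbitrary $T$ (main obstacle).} The difficulty is that the smallness for $c_2$ and $\|\mathcal R\|$ came from $T_0$ small \emph{near $t=0$}, while restarting at a time $t_0$ introduces a nonzero initial datum $(v_2,H_2)(t_0)\in L^2$. I would restart from $t_0$ with datum $a=(v_2,H_2)(t_0)$, put $e^{(t-t_0)\Delta}a$ into $\mathcal R$, and take $X$ to be $X$ on $(t_0,t_0+\delta)$. The new remainder is bounded by
\[
\|a\|_{L^2}+C\|(v_1,H_1)\|^2_{L^4(\R^3\times(t_0,t_0+\delta))}\le C(T)+o(1).
\]
Although this bound is not small, it is uniformly bounded by the energy estimate. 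Choosing $\delta$ small uniformly in $t_0\in[0,T]$ (possible by uniform absolute continuity of $\|v_1\|^5_{L^5},\|v_1\|^4_{L^4}$ in time) makes $c_2<1/2$ and $c_1=C\epsilon^{-3/2}\delta^{1/2}$ small enough that $4c_1\|\mathcal R\|<(1-c_2)^2$. Since $\delta$ depends only on $\epsilon$, $T$ and the data, finitely many steps cover $[0,T]$. The energy bound \eqref{1-4-1}, valid on each concatenated interval, keeps $\|a\|$ controlled throughout.
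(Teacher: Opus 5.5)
Your proposal is correct and follows the paper's proof. The paper obtains short-time existence from Lemma~\ref{fixed point thm} with $c_1=C\sqrt{T}\epsilon^{-3/2}$, $c_2\le C\|(v_1,H_1)\|_{L^5(\mathbb R^3\times(0,T))}$ and $\|\mathcal R\|_{X_T}\le C\|(v_1,H_1)\|^2_{L^4(\mathbb R^3\times(0,T))}$, then uses the $\epsilon$-independent Gronwall bound of Lemma~\ref{Energy Estimate} and continues in time. Your continuation step, which restarts at $t_0$ with $e^{(t-t_0)\Delta}a$ absorbed into $\mathcal R$ and uses a step length $\delta$ uniform in $t_0\in[0,T]$, is a more explicit version of the paper's contradiction argument at a maximal time $T^*$, where it only says to ``reason as in Step 1''.
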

\begin{proof}
The proof consists of two steps:  \crm{In step 1, we construct a energy weak solution $(v_{2\epsilon},H_{2\epsilon})\in X_T$} for the approximate system \eqref{smooth equation} in $\R^3\times(0,T)$ with small $T$; subsequently, in step 2 the standard energy estimate shows the energy weak solution constructed in step 1 can be extended in $(0,T)$ for any large $T\in (0,\infty)$.

\textbf{Step 1:}~ {\em The approximate system \eqref{smooth equation} admits a energy weak solution $\crm{(v_{2\epsilon},H_{2\epsilon})}\in X_T$ with small $T$.}
To achieve this, we define the bilinear operator in $X_T$
$$
\mathcal{M}\left((\tilde{v}_1,\tilde{H}_1),(\tilde{v}_2,\tilde{H}_2)\right)
=\left(\mathcal{M}_1\left((\tilde{v}_1,\tilde{H}_1),(\tilde{v}_2,\tilde{H}_2)\right),
\mathcal{M}_2\left((\tilde{v}_1,\tilde{H}_1),(\tilde{v}_2,\tilde{H}_2)\right)\right)
$$
for any $(\tilde{v}_1,\tilde{H}_1),(\tilde{v}_2,\tilde{H}_2) \in X_T$. Here, $\mathcal{M}_1$ and $\mathcal{M}_2$ are respectively given by
\begin{align*} &\mathcal{M}_1\Big((\tilde{v}_1,\tilde{H}_1),(\tilde{v}_2,\tilde{H}_2)\Big)\\
&=\int_{0}^{t}W_{(t-s)}\ast \mathbb P\Big((\eta_{\epsilon}\ast\tilde{v}_1)\nabla \tilde{v}_2+\tilde{v}_1\nabla v_1+v_1\nabla\tilde{v}_1
-(\eta_{\epsilon}\ast\tilde{H}_1)\nabla\tilde{H}_2\\
&\quad-\tilde{H}_1\nabla H_1-H_1\nabla\tilde{H}_1\Big)\,{\rm d}s
+\int_{0}^{t}W_{(t-s)}\ast \mathbb P\Big(v_1\nabla v_1- H_1\nabla H_1\Big)\,{\rm d}s\\
&\mathcal{M}_2\Big((\tilde{v}_1,\tilde{H}_1),(\tilde{v}_2,\tilde{H}_2)\Big)\\
&=\int_{0}^{t}W_{(t-s)}\ast \mathbb P\Big((\eta_{\epsilon}\ast\tilde{v}_1)\nabla\tilde{H}_2+\tilde{v}_1\nabla H_1+v_1\nabla\tilde{H}_1-(\eta_{\epsilon}\ast\tilde{H}_1)\nabla\tilde{v}_2\\
&\quad-\tilde{H}_1\nabla v_1-H_1\nabla\tilde{v}_1\Big)\,{\rm d}s
+\int_{0}^{t}W_{(t-s)}\ast \mathbb P\Big(v_1\nabla H_1-H_1\nabla v_1\Big)\,{\rm d}s.
\end{align*}
\crm{Take note} that proving the existence of a solution to the system \eqref{smooth equation} is equivalent to proving the existence of a fixed point for the integral equations
$$
(\tilde{v}_2,\tilde{H}_2)=\mathcal{M}\Big((\tilde{v}_2,\tilde{H}_2),(\tilde{v}_2,\tilde{H}_2)\Big).
$$
To do this,  we need to show by Lemma \ref{fixed point thm}
\begin{equation*}	
\Big\|\mathcal{M}\Big((\tilde{v}_1,\tilde{H}_1),(\tilde{v}_2,\tilde{H}_2)\Big)\Big\|_{X_T}
\leq c_1\|(\tilde{v}_1,\tilde{H}_1)\|_{X_T}\|(\tilde{v}_2,\tilde{H}_2)\|_{X_T}
+c_2\|(\tilde{v}_1,\tilde{H}_1)\|_{X_T}+\|\mathcal{R}\|_{X_T},\\
\end{equation*}
\begin{equation*}
4c_1\|\mathcal{R}\|_{X_T}<(1-c_2)^2,
\end{equation*}
for any $(\tilde{v}_1,\tilde{H}_1),(\tilde{v}_2,\tilde{H}_2)\in X_T$. To verify the above conditions, we now estimate each term of $\mathcal{M}\Big((\tilde{v}_1,\tilde{H}_1),(\tilde{v}_2,\tilde{H}_2)\Big)$. For $\int_{0}^{t}W_{(t-s)}\ast \mathbb P(\eta_{\epsilon}\ast\tilde{v}_1)\nabla\tilde{v}_2\,{\rm d}s$, by the H\"{o}lder's inequality and classical properties of mollifiers, one has
\begin{equation*}
	\begin{aligned}
		\|\eta_{\epsilon}\ast\tilde{v}_1(\cdot,t)\|_{L^\infty(\mathbb R^3)}&\leq \|\eta_{\epsilon}\|_{L^2(\mathbb R^3)}\|\tilde{v}_1(\cdot,t)\|_{L^2(\mathbb R^3)}\\
		&=\Big(\int_{\mathbb{R}^3}\Big|\frac{1}{\epsilon^3}\eta\Big(\frac{x}{\epsilon}\Big)\Big|^2{\rm d}x\Big)^{\frac{1}{2}}\|\tilde{v}_1\|_{L^2(\mathbb R^3)}\\
	    &=\epsilon^{-\frac{3}{2}}\|\tilde{v}_1(\cdot,t)\|_{L^2(\mathbb R^3)}\|\eta\|_{L^2(\mathbb R^3)}.
	\end{aligned}
\end{equation*}
This, together with the Lemma \ref{H^-1 to X_T}, yields,
\begin{equation*}
	\begin{aligned}
		\Big\|\int_{0}^{t}W_{(t-s)}\ast \mathbb P(\eta_{\epsilon}\ast\tilde{v}_1)\nabla\tilde{v}_2\,{\rm d}s\Big\|_{X_T}&\leq \|(\eta_{\epsilon}\ast\tilde{v}_1)\nabla\tilde{v}_2\|_{L^2((0,T),\dot{H}^{-1}(\mathbb R^3))}\\
		&\leq C\|(\eta_{\epsilon}\ast\tilde{v}_1)\otimes\tilde{v}_2\|_{L^2(0,T;L^2(\mathbb R^3))}\\
		&\leq C\Big(\int_{0}^{T}\|\eta_{\epsilon}\ast\tilde{v}_1\|^2_{L^\infty(\mathbb R^3)}\|\tilde{v}_2\|^2_{L^2(\mathbb R^3)}\,{\rm d}t\Big)^\frac{1}{2}\\
		&\leq C \sqrt{T}\epsilon^{-\frac{3}{2}}\|\tilde{v}_1\|_{L^\infty(0,T; L^2(\mathbb R^3))}\|\tilde{v}_2\|_{L^\infty(0,T; L^2(\mathbb R^3))}\\
        &\leq C \sqrt{T}\epsilon^{-\frac{3}{2}}\|\tilde{v}_1\|_{X_T}\|\tilde{v}_2\|_{X_T}
	\end{aligned}
\end{equation*}
For $\int_{0}^{t}W_{(t-s)}\ast \mathbb P(\tilde{v}_1\nabla v_1)\, {\rm d}s$, by the H\"{o}lder's inequality and Lemma \ref{H^-1 to X_T}, one has,
\begin{equation*}
	\begin{aligned}
		\Big\|\int_{0}^{t}W_{(t-s)}\ast \mathbb P(\tilde{v}_1\nabla v_1)\,{\rm d}s\Big\|_{X_T}&\leq\|\tilde{v}_1\nabla v_1\|_{(L^2(0,T),\dot{H}^{-1})}\\
		&\leq C\|v_1\otimes \tilde{v}_1\|_{L^2(0,T;L^2(\mathbb R^3))}\\
		&\leq C\|v_1\|_{L^5(0,T;L^5(\mathbb R^3)) }\|\tilde{v}_1\|_{L^{\frac{10}{3}}(0,T;L^{\frac{10}{3}}(\mathbb R^3))}\\
&\leq C\|v_1\|_{L^5(0,T;L^5(\mathbb R^3)) }\|\tilde{v}_1\|_{X_T}.
	\end{aligned}
\end{equation*}
Similarly,
\begin{equation*}
	\begin{aligned}
		\Big\|\int_{0}^{t}W_{(t-s)}\ast \mathbb P(v_1\nabla\tilde{v}_1)\,{\rm d}s\Big\|_{X_T}&\leq \|v_1\nabla\tilde{v}_1\|_{L^2(0,T;\dot{H}^{-1}(\mathbb R^3))}\\
		&\leq C\|v_1\otimes\tilde{v}_1\|_{L^2(0,T;L^2(\mathbb R^3))}\\
		&\leq C\|v_1\|_{L^5(0,T;L^5(\mathbb R^3)) }\|\tilde{v}_1\|_{L^{\frac{10}{3}}(0,T;L^{\frac{10}{3}}(\mathbb R^3))}\\
&\leq C\|v_1\|_{L^5(0,T;L^5(\mathbb R^3))}\|\tilde{v}_1\|_{X_T}
	\end{aligned}
\end{equation*}
For $\int_{0}^{t}W_{(t-s)}\ast \mathbb P(v_1\nabla v_1) \,{\rm d}s$, by the H\"{o}lder's inequality and Lemma \ref{H^-1 to X_T}, one has
\begin{equation*}
	\begin{aligned}
		\Big\|\int_{0}^{t}W_{(t-s)}\ast \mathbb P(v_1\nabla v_1)\,{\rm d}s\Big\|_{X_T}&\leq \|v_1\nabla v_1\|_{L^2(0,T;\dot{H}^{-1}(\mathbb R^3))}\\
		&\leq C\|v_1\otimes v_1\|_{L^2(0,T;L^2(\mathbb R^3))}\\
		&\leq C\|v_1\|_{L^4(0,T;L^4(\mathbb R^3))}^2\\
	\end{aligned}
\end{equation*}
The remaining terms can be estimated through the same process as above. Indeed, summing all of these yields
\begin{equation*}
	\begin{aligned}
\Big\|\mathcal{M}_1\Big((\tilde{v}_1,\tilde{H}_1),(\tilde{v}_2,\tilde{H}_2)\Big)\Big\|_{X_T}&\leq C\sqrt{T}\epsilon^{-\frac{3}{2}}\Big(\|\tilde{v}_1\|_{X_T}\|\tilde{v}_2\|_{X_T} +\|\tilde{H}_1\|_{X_T}\|\tilde{H}_2\|_{X_T}\Big)\\
		&\quad+C\|v_1\|_{L^5(0,T;L^5(\mathbb R^3))}\|\tilde{v}_1\|_{X_T}+C\|H_1\|_{L^5(0,T;L^5(\mathbb R^3))}\|\tilde{H}_1\|_{X_T}\\
		&\quad+C\|v_1\|_{L^4(0,T;L^4(\mathbb R^3))}^2+C\|H_1\|_{L^4(0,T;L^4(\mathbb R^3))}^2,
	\end{aligned}
\end{equation*}
\begin{equation*}
\begin{aligned}
\Big\|\mathcal{M}_2\Big((\tilde{v}_1,\tilde{H}_1),(\tilde{v}_2,\tilde{H}_2)\Big)\Big\|_{X_T}&\leq C\sqrt{T}\epsilon^{-\frac{3}{2}}\Big(\|\tilde{v}_1\|_{X_T}\|\tilde{H}_2\|_{X_T} +\|\tilde{H}_1\|_{X_T}\|\tilde{v}_2\|_{X_T}\Big)\\
		&\quad+C\|v_1\|_{L^5(0,T;L^5(\mathbb R^3))}\|\tilde{H}_1\|_{X_T}
+C\|H_1\|_{L^5(0,T;L^5(\mathbb R^3))}\|\tilde{v}_1\|_{X_T}\\
		&\quad+2C\|v_1\|_{L^4(0,T;L^4(\mathbb R^3))}\|H_1\|_{L^4(0,T;L^4(\mathbb R^3))}.
	\end{aligned}
\end{equation*}
Combing the above relations, we easily derive that
\begin{align*}
\Big\|\mathcal{M}\Big((\tilde{v}_1,\tilde{H}_1),(\tilde{v}_2,\tilde{H}_2)\Big)\Big\|_{X_T}&\leq C\sqrt{T}\epsilon^{-\frac{3}{2}}\Big(\|\tilde{v}_1\|_{X_T}+\|\tilde{H}_1\|_{X_T}\Big)
\Big(\|\tilde{v}_2\|_{X_T}+\|\tilde{H}_2\|_{X_T}\Big)\\
&\quad+C\Big(\|v_1\|_{L^5(0,T;L^5(\mathbb R^3))}+\|H_1\|_{L^5(0,T;L^5(\mathbb R^3))}\Big)\Big(\|\tilde{v}_1\|_{X_T}+\|\tilde{H}_1\|_{X_T}\Big)\\
&\quad+C\Big(\|v_1\|_{L^4(0,T;L^4(\mathbb R^3))}+\|H_1\|_{L^4(0,T;L^4(\mathbb R^3))}\Big)^2,
\end{align*}
which implies
\begin{align*}
\Big\|\mathcal{M}\Big((\tilde{v}_1,\tilde{H}_1),(\tilde{v}_2,\tilde{H}_2)\Big)\Big\|_{X_T}
&\leq c_1
\|(\tilde{v}_1,\tilde{H}_1)\|_{X_T}\|(\tilde{v}_2,\tilde{H}_2)\|_{X_T}+c_2\|(\tilde{v}_1,\tilde{H}_1)\|_{X_T}+\|\mathcal{R}\|_{X_T},
\end{align*}
where
\begin{equation*}
	\begin{aligned}
		&c_1=C\sqrt{T}\epsilon^{-\frac{3}{2}},\quad c_2=C\Big(\|v_1\|_{L^5(0,T;L^5(\mathbb R^3))}+\|H_1\|_{L^5(0,T;L^5(\mathbb R^3))}\Big),\\
&\mathcal{R}(x,t)=\int_{0}^{t}W_{(t-s)}\ast \mathbb P\Big(v_1\nabla v_1- H_1\nabla H_1\Big)\,{\rm d}s+\int_{0}^{t}W_{(t-s)}\ast \mathbb P\Big(v_1\nabla H_1-H_1\nabla v_1\Big)\,{\rm d}s,
	\end{aligned}
\end{equation*}
with
$$
\|\mathcal{R}\|_{X_T}\leq C\|(v_1,H_1)\|^2_{L^4(0,T;L^4(\mathbb R^3))}\leq  C\|(v_1,H_1)\|^2_{L^4(0,1;L^4(\mathbb R^3))}.
$$
Now we take $T_1\ll1$ small such that $c_2<1$, and derive by a simple calculation
$$
4c_1\|\mathcal{R}\|_{X_T}<(1-c_2)^2
$$
with
\begin{align*}
T=\min\Big\{T_1,\frac{1}{16C^{4}}\epsilon^{3}\Big(\|(v_1,H_1)\|_{L^4(0,1;L^4(\mathbb R^3))}\Big)^{-4}\Big(1-C\Big(\|(v_1,H_1)\|_{L^5(0,T_1;L^5(\mathbb R^3))}\Big)\Big)^4\Big\}.
\end{align*}
This, along with Lemma \ref{fixed point thm}, shows that  \eqref{smooth equation} admits a solution, denoted by $(v_{2\epsilon},H_{2\epsilon})$ which belongs to $X_T$ and satisfies
$$\|(v_{2\epsilon},H_{2\epsilon})\|_{X_T}\leq \frac{1-c_2-\sqrt{(c_2-1)^2-4c_1\|\mathcal{R}\|_{X_T}}}{2c_1}.
$$
Finally, from the previous proof process, it is known that $$(f_1,f_2)\in L^2(0,T;{H}^{-1}(\mathbb R^3)),$$
where
\begin{align*}
			&f_1=-(\eta_{\epsilon}\ast v_{2\epsilon})\cdot\nabla v_{2\epsilon}- v_{2\epsilon}\cdot\nabla v_1- v_1\cdot\nabla v_{2\epsilon}+(\eta_{\epsilon}\ast H_{2\epsilon})\cdot\nabla H_{2\epsilon}\\
			&~~~~~~~+ H_{2\epsilon}\cdot\nabla H_1+ H_1 \cdot\nabla H_{2\epsilon}-v_1\cdot\nabla v_1+ H_1\cdot\nabla H_1,\\
			&f_2=-(\eta_{\epsilon}\ast v_{2\epsilon})\cdot\nabla H_{2\epsilon}-v_{2\epsilon}\cdot\nabla H_1- v_1\cdot\nabla H_{2\epsilon}+(\eta_{\epsilon}\ast H_{2\epsilon})\cdot\nabla v_{2\epsilon}\\
			&~~~~~~~+ H_{2\epsilon}\cdot\nabla v_1+H_1\cdot\nabla v_{2\epsilon}- v_1\cdot\nabla H_1+H_1\cdot\nabla v_1.
\end{align*}
This, along with the maximal regularity of Stokes operator, yields
\begin{align*}
(\partial_t v_{2\epsilon},\partial_t H_{2\epsilon})\in L^2(0,T;{H}^{-1}(\mathbb R^3)).
\end{align*}
Thus, by the Lemma \ref{3-D ns equations}, one has $(v_{2\epsilon},H_{2\epsilon})\in C([0,T];L^2(\mathbb R^3))$.
\medskip

\textbf{Step 2:}~ {\em \crm{The existence time interval of the} energy weak solution pair $(v_{2\epsilon},H_{2\epsilon})$ constructed in Step 1 can be extended to $(0,\infty)$.}  Assume by contradiction that the survival time interval  of $(v_{2\epsilon},H_{2\epsilon})$  is $(0,T^*)$ with some finite $T^*$. First of all,  notice that by repeating the proof process of Lemma \ref{Energy Estimate}, one has
\begin{equation*}\label{energy estimate}
		\|(v_{2\epsilon},H_{2\epsilon})\|_{L^\infty(0,T^*;L^2(\mathbb R^3)) }+\|(v_{2\epsilon},H_{2\epsilon})\|_{L^2(0,T^*;\dot H^1(\mathbb R^3))}\leq \crm{C{T^*}^{\frac{3}{4}},}
\end{equation*}
\crm{with some constant $C$ depending only on $\|v_0\|_{L^3(\mathbb R^3)},\|h_0\|_{L^3(\mathbb R^3)}$.}
\crm{Moreover}, Step 1 shows
$$
(v_{2\epsilon},H_{2\epsilon})\in C([0,T^*];L^2(\mathbb R^3)).
$$
Therefore, $\|(v_{2\epsilon},H_{2\epsilon})(\cdot,T^*)\|_{L^2(\mathbb R^3)}$ is well-defined and bounded. Then, we reason as in the proof of Step 1 to conclude that $(v_{2\epsilon},H_{2\epsilon})$ can be extended after time $T^*$, contradicting the fact that $T^*$ is a maximum.

Now, we claim
\begin{align}\label{-2/3}
\|\partial_t v_{2\epsilon},\partial_t H_{2\epsilon}\|_{ L^2(0,T;H^{-\frac{3}{2}}(\mathbb R^3))}\leq C(\|v_0\|_{L^3(\mathbb R^3)},\|h_0\|_{\mathbb R^3},T)
\end{align}
 for any $T\in(0,+\infty)$.
To derive the desired result,  we need to verify that $(\Delta v_{2\epsilon},\Delta H_{2\epsilon})$ and each term of $f_1,f_2$ are uniformly bounded in $L^2(0,T;{H}^{-\frac{3}{2}}(\mathbb R^3))$, respectively.
For $\Delta v_{2\epsilon}$, employing the H\"{o}lder's inequality, we can get
\begin{align*}
\|\Delta v_{2\epsilon}\|^2_{L^2(0,T;H^{-1}(\mathbb R^3))}&=\int_{0}^{T}\|\Delta v_{2\epsilon}\|^2_{H^{-1}(\mathbb R^3)}\,{\rm d}t=\int_{0}^{T}\Big(\sup_{\stackrel{\varphi\in H^{1}(\mathbb R^3),}{\varphi\neq 0}}\frac{|\langle\Delta v_{2\epsilon},\varphi\rangle|}{\|\varphi\|_{H^{1}(\mathbb R^3)}}\Big)^2\,{\rm d}t\\
&\leq \int_{0}^{T}\|\nabla v_{2\epsilon}\|^2_{L^2(\mathbb R^3)}\,{\rm d}t\leq \|v_{2\epsilon}\|^2_{L^2(0,T;\dot {H}^1(\mathbb R^3))}.
\end{align*}
For $(v_{2\epsilon})^\epsilon\nabla v_{2\epsilon}$, by the H\"{o}lder's inequality, one has
\begin{align*}
\|(v_{2\epsilon})^\epsilon\nabla v_{2\epsilon}\|_{H^{-\frac{3}{2}}(\mathbb R^3)}&=\sup_{\stackrel{\varphi\in H^{\frac{3}{2}}(\mathbb R^3),}{\varphi\neq 0}}\frac{|\langle v_{2\epsilon}\nabla v_{2\epsilon},\varphi\rangle|}{\|\varphi\|_{H^{\frac{3}{2}}(\mathbb R^3)}}\leq\frac{|\langle v_{2\epsilon} v_{2\epsilon},\nabla\varphi\rangle|}{\|\nabla \varphi\|_{H^{\frac{1}{2}}(\mathbb R^3)}}\\
&\leq\frac{\|v_{2\epsilon}\|_{L^2(\mathbb R^3)}\|v_{2\epsilon}\|_{L^6(\mathbb R^3)}\|\nabla\varphi\|_{L^3(\mathbb R^3)}}{\|\nabla \varphi\|_{H^{\frac{1}{2}}(\mathbb R^3)}}\\
&\leq \|v_{2\epsilon}\|_{L^2(\mathbb R^3)}\|\nabla v_{2\epsilon}\|_{L^2(\mathbb R^3)},
\end{align*}
thus
\begin{align*}
\|v_{2\epsilon}\nabla v_{2\epsilon}\|_{L^2(0,T;H^{-\frac{3}{2}}(\mathbb R^3))}&=\Big(\int_{0}^{T}\|v_{2\epsilon}\nabla v_{2\epsilon}\|^2_{H^{-\frac{3}{2}}(\mathbb R^3)}\,{\rm d}t\Big)^{\frac{1}{2}}\\
&\leq\Big(\int_{0}^{T}\|v_{2\epsilon}\|^2_{L^2(\mathbb R^3)}\|\nabla v_{2\epsilon}\|^2_{L^2(\mathbb R^3)}\,{\rm d}t\Big)^{\frac{1}{2}}\\
&\leq\|v_{2\epsilon}\|_{L^\infty (0,T;L^2(\mathbb R^3))}\| v_{2\epsilon}\|_{L^2(0,T;H^1(\mathbb R^3))}.
\end{align*}
For $v_{2\epsilon}\nabla v_1$, by the H\"{o}lder's inequality, one has
\begin{align*}
\|v_{2\epsilon}\nabla v_1\|_{H^{-1}(\mathbb R^3)}&=\sup_{\stackrel{\varphi\in H^{1}(\mathbb R^3),}{\varphi\neq 0}}\frac{|\langle v_{2\epsilon}\nabla v_1,\varphi\rangle|}{\|\varphi\|_{H^{1}(\mathbb R^3)}}\leq\frac{|\langle v_{2\epsilon} v_1,\nabla\varphi\rangle|}{\|\nabla \varphi\|_{L^2(\mathbb R^3)}}\\
&\leq\frac{\|v_{2\epsilon}\|_{L^6(\mathbb R^3)}\|v_1\|_{L^3(\mathbb R^3)}\|\nabla\varphi\|_{L^2(\mathbb R^3)}}{\|\nabla \varphi\|_{L^2(\mathbb R^3)}}\\
&\leq \|\nabla v_{2\epsilon}\|_{L^2(\mathbb R^3)}\|v_1\|_{L^3(\mathbb R^3)},
\end{align*}
thus
\begin{align*}
\|v_{2\epsilon}\nabla v_1\|_{L^2(0,T;H^{-1}(\mathbb R^3))}&=\Big(\int_{0}^{T}\|v_{2\epsilon}\nabla v_1\|^2_{H^{-1}(\mathbb R^3)}\,{\rm d}t\Big)^{\frac{1}{2}}\\
&\leq\Big(\int_{0}^{T}\|\nabla v_{2\epsilon}\|^2_{L^2(\mathbb R^3)}\| v_1\|^2_{L^3(\mathbb R^3)}\,{\rm d}t\Big)^{\frac{1}{2}}\\
&\leq\|v_{2\epsilon}\|_{L^2(0,T;\dot{H}^1(\mathbb R^3))}\| v_1\|_{L^\infty(0,T;L^3(\mathbb R^3))}.\\
\end{align*}
For $v_1\nabla v_{2\epsilon}$, one has
\begin{align*}
\|v_1\nabla v_{2\epsilon}\|_{H^{-1}(\mathbb R^3)}&=\sup_{\stackrel{\varphi\in H^{1}(\mathbb R^3),}{\varphi\neq 0}}\frac{|\langle v_1\nabla v_{2\epsilon},\varphi\rangle|}{\|\varphi\|_{H^{1}(\mathbb R^3)}}\\
&\leq\frac{\|v_1\|_{L^3(\mathbb R^3)}\|\nabla v_{2\epsilon}\|_{L^2(\mathbb R^3)}\|\varphi\|_{L^6(\mathbb R^6)}}{\|\varphi\|_{H^{1}(\mathbb R^3)}}\\
&\leq\|v_1\|_{L^3(\mathbb R^3)}\|\nabla v_{2\epsilon}\|_{L^2(\mathbb R^3)}.
\end{align*}
From which we have
\begin{align*}
\|v_1\nabla v_{2\epsilon}\|_{L^2(0,T;H^{-1}(\mathbb R^3))}&=\Big(\int_{0}^{T}\|v_1\nabla v_{2\epsilon}\|^2_{H^{-1}(\mathbb R^3)}\,{\rm d}t\Big)^{\frac{1}{2}}\\
&\leq\Big(\int_{0}^{T}\|v_1\|^2_{L^3(\mathbb R^3)}\|\nabla v_{2\epsilon}\|^2_{L^2(\mathbb R^3)}\,{\rm d}t\Big)^{\frac{1}{2}}\\
&\leq\|v_1\|_{L^\infty(0,T;L^3(\mathbb R^3))}\|v_{2\epsilon}\|_{L^2(0,T;\dot {H}^1(\mathbb R^3))}.
\end{align*}
For $v_1\nabla v_1$,
\begin{align*}
\|v_1\nabla v_1\|_{H^{-\frac{3}{2}}(\mathbb R^3)}&=\sup_{\stackrel{\varphi\in H^{\frac{3}{2}}(\mathbb R^3),}{\varphi\neq 0}}\frac{|\langle v_1\nabla v_1,\varphi\rangle|}{\|\varphi\|_{H^{\frac{3}{2}}(\mathbb R^3)}}
\leq\frac{|\langle v_1 v_1,\nabla\varphi\rangle|}{\|\nabla \varphi\|_{H^{\frac{1}{2}}(\mathbb R^3)}}\\
&\leq\frac{\|v_1\|_{L^3(\mathbb R^3)}\|v_1\|_{L^3(\mathbb R^3)}\|\nabla\varphi\|_{L^3(\mathbb R^3)}}{\|\nabla \varphi\|_{H^{\frac{1}{2}}(\mathbb R^3)}}\\
&\leq \|v_1\|^2_{L^3(\mathbb R^3)},
\end{align*}
it follows that
\begin{align*}
\|v_1\nabla v_1\|_{L^2(0,T;H^{-\frac{3}{2}}(\mathbb R^3))}&=\Big(\int_{0}^{T}\|v_1\nabla v_1\|^2_{H^{-\frac{3}{2}}(\mathbb R^3)}\,{\rm d}t\Big)^{\frac{1}{2}}
\leq\left(\int_{0}^{T}\|v_1\|^4_{L^3(\mathbb R^3)}\,{\rm d}t\right)^{\frac{1}{2}}\\
&\leq\|v_1\|_{L^4(0,T;L^3(\mathbb R^3))}\|v_1\|_{L^\infty(0,T;L^3(\mathbb R^3))}\leq T^{\frac{1}{4}}\|v_1\|^2_{L^\infty(0,T;L^3(\mathbb R^3))}.
\end{align*}
Using a similar process, one can demonstrate that the other terms of $f_1,f_2$ are uniformly bounded in $L^2(0,T;H^{-\frac{3}{2}}(\mathbb R^3))$, which, along with the above estimates, conclude the claim \eqref{-2/3}.
\end{proof}
\medskip

\subsection{The proof of Theorem \ref{thm 1.1}}

In the previous subsection, we constructed the global weak solution sequence to the approximate equation \eqref{smooth equation}, which satisfies the energy estimates \eqref{1-4-1}. In this section, we will utilize energy estimates for the approximate solutions combined with Lemma \ref{RTtheorem_23} to derive a \crm{global} weak solution of system \eqref{v_2}.

\begin{proof}[Proof of Theorem \ref{thm 1.1}]
To begin with,  the sequence $(v_{2\epsilon},H_{2\epsilon})$ is bound in $X_T$ uniformly in $\epsilon$. Thus, we can find a subsequence of $(v_{2\epsilon},H_{2\epsilon})$, still denoted by $(v_{2\epsilon},H_{2\epsilon})$ such that as $\epsilon\rightarrow 0$,
\begin{equation}\label{strong and weak convergence}
	\begin{aligned}
&(v_{2\epsilon},H_{2\epsilon})\stackrel{*}{\rightharpoonup}(v_2,H_2)\quad{\rm in}\quad L^\infty(0,T;L^2(\mathbb R^3)),\\
&(v_{2\epsilon},H_{2\epsilon}){\rightharpoonup}(v_2,H_2)\quad\ \ {\rm in}\quad L^2(0,T;\dot{H}^1(\mathbb R^3)).
\end{aligned}
\end{equation}
 The Lemma \ref{RTtheorem_23}, along with \eqref{1-4-1}, \eqref{-2/3},  implies there is subsequence $(v_{2\epsilon},H_{2\epsilon})$,  still denoted by $(v_{2\epsilon},H_{2\epsilon})$ , such that
\begin{equation}\label{L2 strong convergence}
(v_{2\epsilon},H_{2\epsilon})\rightarrow (v_2,H_2)\quad {\rm in}\quad \crm{L^2(0,T; L^2_{\rm loc}(\R^3)),}
\end{equation}
\crm{for any $T<\infty$.} To conclude the proof, we need to prove that $(v,H)=(v_1+v_2, H_1+H_2)$ is a global weak solution of  the Cauchy problem \eqref{EQ-tlh}-\eqref{initial value} in the sense of Definition \ref{def1}. The strategy proceeds in three steps: (i) prove that $(v_2,H_2)$ satisfies system \eqref{v_2} in the sense of distributions, (ii) prove that $(v_2,H_2)$ satisfies the local energy inequality \eqref{local energy estimate}, (iii) prove that the global energy inequality \eqref{global energy inequality} holds.

\textbf{Step (i):}~ {\em Prove that $(v_2,H_2)$ satisfies system \eqref{v_2} in the sense of distributions.}
To achieve this, we multiply both sides of \eqref{smooth equation} by $w\in C^\infty_{0,\sigma}(\mathbb R^3\times (0,\infty))$ and integrate by parts to obtain
\begin{subequations}
\begin{equation}\label{subequation1}
	\begin{aligned}
		&\int_{0}^{\infty}\int_{\mathbb R^3}v_{2\epsilon}\partial_t w \mathrm{d}x \mathrm{d}t-\int_{0}^{\infty}\nabla v_{2\epsilon}\nabla w \mathrm{d}x\mathrm{d}t=\int_{0}^{\infty}\int_{\mathbb R^3}\big((\eta_{\epsilon}\ast v_{2\epsilon})\nabla w v_{2\epsilon}+v_{2\epsilon}\nabla w v_1+ v_1\nabla w v_{2_\epsilon}\\
		&~~~~~-(\eta_{\epsilon}\ast H_{2\epsilon})\nabla w H_{2\epsilon}- H_{2\epsilon}\nabla w H_1- H_1\nabla w H_{2\epsilon}+ v_1\nabla w v_1- H_1\nabla w H_1\big)\mathrm{d}x\mathrm{d}t,
\end{aligned}
\end{equation}
\begin{equation}\label{subequation2}
\begin{aligned}
		&\int_{0}^{\infty}\int_{\mathbb R^3}H_{2\epsilon}\partial_t w \mathrm{d}x \mathrm{d}t-\int_{0}^{\infty}\nabla H_{2\epsilon}\nabla w \mathrm{d}x\mathrm{d}t=\int_{0}^{\infty}\int_{\mathbb R^3}\big((\eta_{\epsilon}\ast v_2)\nabla w H_{2\epsilon}+v_{2\epsilon}\nabla w H_1+ v_1\nabla w H_{2_\epsilon}\\
		&~~~~~-(\eta_{\epsilon}\ast H_2)\nabla w v_{2\epsilon}- H_{2\epsilon}\nabla w v_1- H_1\nabla w v_{2\epsilon}+ v_1\nabla w H_1- H_1\nabla w v_1\big)\mathrm{d}x\mathrm{d}t.
\end{aligned}
\end{equation}
\end{subequations}
From \eqref{strong and weak convergence}, it is known that
$$\lim_{\epsilon\to 0}\int_{0}^{\infty}\int_{\mathbb R^3}v_{2\epsilon}\partial_t w\mathrm{d}x\mathrm{d}t =\int_{0}^{\infty}\int_{\mathbb R^3}v_2\partial_t w\mathrm{d}x\mathrm{d}t,\quad \lim_{\epsilon\to 0}\int_{0}^{\infty}\int_{\mathbb R^3}H_{2\epsilon}\partial_t w\mathrm{d}x\mathrm{d}t =\int_{0}^{\infty}\int_{\mathbb R^3}H_2\partial_t w\mathrm{d}x\mathrm{d}t,$$
$$\lim_{\epsilon\to 0}\int_{0}^{\infty}\int_{\mathbb R^3}\nabla H_{2\epsilon}\nabla w \mathrm{d}x\mathrm{d}t=\int_{0}^{\infty}\int_{\mathbb R^3}\nabla H_2\nabla w \mathrm{d}x\mathrm{d}t,\quad\lim_{\epsilon\to 0}\int_{0}^{\infty}\int_{\mathbb R^3}\nabla v_{2\epsilon}\nabla w \mathrm{d}x\mathrm{d}t=\int_{0}^{\infty}\int_{\mathbb R^3}\nabla v_2\nabla w \mathrm{d}x\mathrm{d}t.$$
For the term $\int_{0}^{\infty}\int_{\mathbb R^3}(\eta_{\epsilon}\ast v_2)\nabla w H_{2\epsilon}\mathrm{d}x\mathrm{d}t$ of \eqref{subequation1}, one has
\begin{equation*}
	\begin{aligned}
		&\Big|\int_{0}^{\infty}\int_{\mathbb R^3}(\eta_{\epsilon}\ast v_{2\epsilon})\nabla w v_{2\epsilon}\mathrm{d}x\mathrm{d}t-\int_{0}^{\infty}\int_{\mathbb R^3} v_2\nabla w v_2\mathrm{d}x\mathrm{d}t\Big|\\
		&\leq \int_{0}^{\infty}\int_{\mathbb R^3}\big|(\eta_{\epsilon}\ast v_{2\epsilon})\nabla w v_{2\epsilon}-(\eta_{\epsilon}\ast v_{2\epsilon})\nabla w v_2\big| \mathrm{d}x\mathrm{d}t\\
&\quad+\int_{0}^{\infty}\int_{\mathbb R^3}\big|(\eta_{\epsilon}\ast v_{2\epsilon})\nabla w v_2\mathrm{d}x\mathrm{d}t-v_2\nabla w v_2\big|\mathrm{d}x\mathrm{d}t\\
&\leq\int_{0}^{\infty}\|\eta_{\epsilon}\ast v_{2\epsilon}\|_{L^2(\mathbb R^3)}\|\nabla w\|_{L^\infty(\mathbb R^3)}\|v_{2\epsilon}-v_2\|_{L^2_{\rm loc}(\mathbb R^3)}\mathrm{d}t\\
&\quad+\int_{0}^{\infty}\|\eta_{\epsilon}\ast v_{2\epsilon}-v_2\|_{L^2_{\rm loc}(\mathbb R^3)}\|\nabla w\|_{L^\infty(\mathbb R^3)}\|v_2\|_{L^2(\mathbb R^3)}\mathrm{d}t\\
&\leq \|\eta_{\epsilon}\ast v_{2\epsilon}\|_{L^\infty (0,\infty;L^2(\mathbb R^3))}\|\nabla w\|_{L^2(0,\infty;L^\infty(\mathbb R^3))}\|v_{2\epsilon}-v_2\|_{\crm{L^2(0,T;L^2_{\rm loc}(\mathbb R^3))}}\\
&\quad+\|\eta_{\epsilon}\ast v_{2\epsilon}-v_2\|_{L^2(0,\infty;L^2(\mathbb R^3))}\|\nabla w\|_{L^2(0,\infty;L^\infty(\mathbb R^3))}\|v_2\|_{L^\infty(0,\infty;L^2(\mathbb R^3))}\rightarrow 0,
	\end{aligned}
\end{equation*}
thus
$$\lim_{\epsilon\to 0}\int_{0}^{\infty}\int_{\mathbb R^3}(\eta_{\epsilon}\ast v_{2\epsilon})\nabla w v_{2\epsilon}\mathrm{d}x\mathrm{d}t=\int_{0}^{\infty}\int_{\mathbb R^3} v_2\nabla w v_2\mathrm{d}x\mathrm{d}t.$$
For $\int_{0}^{\infty}\int_{\mathbb R^3} v_{2\epsilon}\nabla w v_1\mathrm{d}x\mathrm{d}t$, one has
\begin{equation*}
	\begin{aligned}
&\Big|\int_{0}^{\infty}\int_{\mathbb R^3} v_{2\epsilon}\nabla w v_1\mathrm{d}x\mathrm{d}t-\int_{0}^{\infty}\int_{\mathbb R^3} v_2\nabla w v_1\mathrm{d}x\mathrm{d}t\Big|\leq \Big|\int_{0}^{\infty}\|v_{2\epsilon}-v_2\|_{L^2_{\rm loc}(\mathbb R^3)}\|v_1\|_{L^3(\mathbb R^3)}\|\nabla w\|_{L^6(\mathbb R^3)}\mathrm{d}t\Big|\\
		&\leq \| v_{2\epsilon}-v_2\|_{\crm{L^2(0,T;L^2_{\rm loc}(\mathbb R^3))}}\|v_1\|_{L^\infty(0,\infty; L^3(\mathbb R^3))}\|\nabla w\|_{L^2(0,\infty;L^6(\mathbb R^3))}\rightarrow 0,
	\end{aligned}
\end{equation*}
therefore
$$\lim_{\epsilon\to 0}\int_{0}^{\infty}\int_{\mathbb R^3} v_{2\epsilon}\nabla w v_1\mathrm{d}x\mathrm{d}t=\int_{0}^{\infty}\int_{\mathbb R^3} v_2\nabla w v_1\mathrm{d}x\mathrm{d}t.$$
 Similarly, we have
 \begin{equation*}
 \begin{aligned}
 &\lim_{\epsilon\to 0}\int_{0}^{\infty}\int_{\mathbb R^3}\big(v_1\nabla w v_{2_\epsilon}-(\eta_{\epsilon}\ast H_{2\epsilon})\nabla w H_{2\epsilon}- H_{2\epsilon}\nabla w H_1- H_1\nabla w H_{2\epsilon}\big)\mathrm{d}x\mathrm{d}t\\
 &=\int_{0}^{\infty}\int_{\mathbb R^3}\big(v_1\nabla w v_{2}-H_{2}\nabla w H_{2}- H_{2}\nabla w H_1- H_1\nabla w H_{2}\big)\mathrm{d}x\mathrm{d}t,
 \end{aligned}
 \end{equation*}
 \begin{equation*}
 \begin{aligned}
&\lim_{\epsilon\to 0}\int_{0}^{\infty}\int_{\mathbb R^3}\big((\eta_{\epsilon}\ast v_2)\nabla w H_{2\epsilon}+v_{2\epsilon}\nabla w H_1+ v_1\nabla w H_{2_\epsilon}-(\eta_{\epsilon}\ast H_2)\nabla w v_{2\epsilon}\\
&~~~~~~~~~~~~~~~~~- H_{2\epsilon}\nabla w v_1- H_1\nabla w v_{2\epsilon}\big)\mathrm{d}x\mathrm{d}t\\
&=\int_{0}^{\infty}\int_{\mathbb R^3}\big( v_2\nabla w H_2+v_2\nabla w H_1+ v_1\nabla w H_2-H_2\nabla w v_2- H_2\nabla w v_1- H_1\nabla w v_2\big)\mathrm{d}x\mathrm{d}t.
 \end{aligned}
 \end{equation*}
 In summary, we can conclude that
\begin{equation*}
	\begin{aligned}
		&\int_{0}^{\infty}\int_{\mathbb R^3}v_{2}\partial_t w \mathrm{d}x \mathrm{d}t-\int_{0}^{\infty}\nabla v_{2}\nabla w \mathrm{d}x\mathrm{d}t=\int_{0}^{\infty}\int_{\mathbb R^3}\big( v_{2}\nabla w v_{2}+v_{2}\nabla w v_1+ v_1\nabla w v_{2}\\
		&~~~~~-H_{2}\nabla w H_{2}- H_{2}\nabla w H_1- H_1\nabla w H_{2}+ v_1\nabla w v_1- H_1\nabla w H_1\big)\mathrm{d}x\mathrm{d}t,\\
		&\int_{0}^{\infty}\int_{\mathbb R^3}H_{2}\partial_t w \mathrm{d}x \mathrm{d}t-\int_{0}^{\infty}\nabla H_{2}\nabla w \mathrm{d}x\mathrm{d}t=\int_{0}^{\infty}\int_{\mathbb R^3}\big( v_2\nabla w H_{2}+v_{2}\nabla w H_1+ v_1\nabla w H_{2}\\
		&~~~~~- H_2\nabla w v_{2}- H_{2}\nabla w v_1- H_1\nabla w v_{2}+ v_1\nabla w H_1- H_1\nabla w v_1\big)\mathrm{d}x\mathrm{d}t.
\end{aligned}
\end{equation*}
Thus, $(v_2,H_2)$ satisfies \eqref{v_2} in the sense of distributions.

\textbf{Step (ii):}~ {\em Prove that $(v_2,H_2)$ satisfies the local energy inequality \eqref{local energy estimate}.}
To do this, we take the inner product of \eqref{smooth equation} with $(v_{2\epsilon}\phi,H_{2\epsilon}\phi)$ and integrate over space-time and find that
\begin{align}\label{approximate energy inequality}
	\begin{split}
		&\int_{\mathbb R^3}(|v_{2\epsilon}|^2+|H_{2\epsilon}|^2)\phi\,{\rm d}x+2\int_{0}^{\infty}\int_{\mathbb R^3}(|\nabla v_{2\epsilon}|^2+|\nabla H_{2\epsilon}|^2)\phi\,{\rm d}x{\rm d}s\\
		&=\int_{0}^{\infty}\int_{\mathbb R^3}(|v_{2\epsilon}|^2+|H_{2\epsilon}|^2)(\partial_t\phi+\Delta\phi)\,{\rm d}x{\rm d}s+\int_{0}^{\infty}\int_{\mathbb R^3}(v_{2\epsilon})^\epsilon\nabla\phi(|v_{2\epsilon}|^2+|H_{2\epsilon}|^2)\,{\rm d}x{\rm d}s\\
		&\quad-2\int_{0}^{\infty}\int_{\mathbb R^3}v_{2\epsilon}H_{2\epsilon}(H_{2\epsilon})^\epsilon\nabla\phi \,{\rm d}x{\rm d}s+\int_{0}^{\infty}\int_{\mathbb R^3}\Pi^\epsilon v_{2\epsilon}\nabla\phi \,{\rm d}x{\rm d}s\\
		&\quad+\int_{0}^{\infty}\int_{\mathbb R^3}\Big(v_1\otimes v_{2\epsilon}+v_{2\epsilon}\otimes v_1+v_1\otimes v_1-H_1\otimes H_{2\epsilon}-H_{2\epsilon}\otimes H_1-H_1\otimes H_1\Big):\\
		&~~~~~~~~~~~~~(\nabla v_{2\epsilon}\phi+v_{2\epsilon}\otimes\nabla\phi)\,{\rm d}x{\rm d}s\\
		&\quad+\int_{0}^{\infty}\int_{\mathbb R^3}\Big(H_1\otimes v_{2\epsilon}+H_{2\epsilon}\otimes v_1+H_1\otimes v_1-v_1\otimes H_{2\epsilon}-v_{2\epsilon}\otimes H_1-v_1\otimes H_1\Big):\\
		&~~~~~~~~~~~~~(\nabla H_{2\epsilon}\phi+H_{2\epsilon}\otimes\nabla\phi)\,{\rm d}x{\rm d}s.
	\end{split}
\end{align}
 Here, $\phi\in C^\infty_0(\mathbb R^3\times (0,\infty))$. In the following, we, for short, denote $\eta_{\epsilon}\ast v_{2\epsilon}$ by $(v_{2\epsilon})^\epsilon$ and $\eta_{\epsilon}\ast H_{2\epsilon}$ by $(H_{2\epsilon})^\epsilon$ respectively. Note that $(v_{2\epsilon},H_{2\epsilon})\rightarrow (v_2,H_2)$ in $L^2(0,T;L^2_{\rm loc}(\mathbb R^3))$ and $(v_{2\epsilon},H_{2\epsilon})$ is uniformly bounded in $\epsilon$ in $\crm{L^{\frac{10}{3}}(\mathbb R^3\times(0,T))}$. This, along with  the interpolation inequality, yields,
\begin{equation*}
	\|(v_{2\epsilon},H_{2\epsilon})-(v_2,H_2)\|_{L^3(\mathbb R^3)}\leq \|(v_{2\epsilon},H_{2\epsilon})-(v_2,H_2)\|^{\frac{1}{6}}_{L^2(\mathbb R^3)}\|(v_{2\epsilon},H_{2\epsilon})-(v_2,H_2)\|^{\frac{5}{6}}_{L^\frac{10}{3}(\mathbb R^3)}
\end{equation*}
thus, as $\epsilon\rightarrow 0$,
\begin{equation}\label{L3convergence}
\begin{aligned}
(v_{2\epsilon},H_{2\epsilon})\rightarrow (v_2,H_2)\quad{\rm in}\quad L^3(0,T;L^3_{\rm loc}(\mathbb R^3)).
\end{aligned}
\end{equation}
On the other hand,
\begin{equation*}
	\begin{aligned}
		&\|\big((v_{2\epsilon})^\epsilon,(H_{2\epsilon})^\epsilon\big)- (v_2,H_2)\|_{L^3(\mathbb R^3)}\\
&\leq\|\big(\eta_{\epsilon}\ast(v_{2\epsilon}-v_2),\eta_{\epsilon}\ast(H_{2\epsilon}-H_2)\big)\|_{L^3(\mathbb R^3)}+\|\big(\eta_{\epsilon}\ast v_2,\eta_{\epsilon}\ast H_2\big)-(v_2,H_2)\|_{L^3(\mathbb R^3)}\\
&\leq C\|(v_{2\epsilon}-v_2,H_{2\epsilon}-H_2)\|_{L^3(\mathbb R^3)}+\|\big(\eta_{\epsilon}\ast v_2,\eta_{\epsilon}\ast H_2\big)-(v_2,H_2)\|_{L^3(\mathbb R^3)}\rightarrow 0,
	\end{aligned}
\end{equation*}
it follows that
$$
\big((v_{2\epsilon})^\epsilon,(H_{2\epsilon})^\epsilon\big)\rightarrow (v_2,H_2)\quad{\rm in}\quad L^3(0,T;L^3_{\rm loc}(\mathbb R^3)).
$$
Next we show that all the terms in \eqref{approximate energy inequality} converge to the corresponding limiting terms in \eqref{local energy estimate}. In fact, for the first term in the left-hand side of \eqref{approximate energy inequality},from \eqref{L2 strong convergence}, one has
\begin{equation*}
	\begin{aligned}
		&\Big|\int_{\mathbb R^3}(|v_{2\epsilon}|^2+|H_{2\epsilon}|^2)\phi\,{\rm d}x-\int_{\mathbb R^3}(|v_2|^2+|H_2|^2)\phi\,{\rm d}x\Big|\\
		&\leq\|\phi\|_{L^\infty(\mathbb R^3)}\| v_{2\epsilon}-v_2\|^2_{L^2(\mathbb R^3)}+\|\phi\|_{L^\infty(\mathbb R^3)}\|H_{2\epsilon}-H_2\|^2_{L^2(\mathbb R^3)}\rightarrow 0.
	\end{aligned}
\end{equation*}
Similarly, for the first term in the right-hand side of \eqref{approximate energy inequality}, one has as $\epsilon\rightarrow0$,
\begin{align*}
\Big|\int_{0}^{\infty}\int_{\mathbb R^3}(|v_{2\epsilon}|^2+|H_{2\epsilon}|^2)(\partial_t\phi+\Delta\phi)\,{\rm d}x{\rm d}s-\int_{0}^{\infty}\int_{\mathbb R^3}(|v_{2}|^2+|H_{2}|^2)(\partial_t\phi+\Delta\phi)\,{\rm d}x{\rm d}s\Big|\rightarrow 0.
\end{align*}
Since $(\nabla v_{2\epsilon},\nabla H_{2\epsilon} )\rightharpoonup(v_2,H_2)$ in $\crm{L^2(\R^3\times(0,T))}$, by Fatou Lemma, we have
\begin{align*}
\int_{0}^{\infty}\int_{\mathbb R^3}(|\nabla v_{2}|^2+|\nabla H_{2}|^2)\phi\,{\rm d}x{\rm d}s\leq\liminf_{\epsilon\rightarrow 0}\int_{0}^{\infty}\int_{\mathbb R^3}(|\nabla v_{2\epsilon}|^2+|\nabla H_{2\epsilon}|^2)\phi\,{\rm d}x{\rm d}s.
\end{align*}
We now consider the second term on the right-hand of \eqref{approximate energy inequality}. By the properties of mollifiers, as $\epsilon\rightarrow0$ and  H\"{o}lder's inequality, one has
\begin{equation*}
	\begin{aligned}
		&\Big|\int_{0}^{\infty}\int_{\mathbb R^3}\Big((v_{2\epsilon})^\epsilon\nabla\phi|v_{2\epsilon}|^2-v_2\nabla \phi|v_2|^2\Big)\,{\rm d}x{\rm d}s\Big|\\
		&\leq \|\nabla\phi\|_{L^\infty(\mathbb R^3)}\int_{0}^{\infty}\int_{\mathbb R^3}\big|(v_{2\epsilon})^\epsilon|v_{2\epsilon}|^2-v_2|v_2|^2\big|\,{\rm d}x{\rm d}s\\
		&\leq C\int_{0}^{\infty}\int_{\mathbb R^3}|(v_{2\epsilon})^\epsilon-v_2|^2(v_{2\epsilon})^\epsilon+2v_2|(v_{2\epsilon})^\epsilon-v_2|(v_{2\epsilon})^\epsilon+|v_2|^2|(v_{2\epsilon})^\epsilon-v_2|\,{\rm d}x{\rm d}s\\
		&\leq C\Big(\|(v_{2\epsilon})^\epsilon-v_2\|^2_{\crm{L^3(0,T;L^3_{\rm loc}(\mathbb R^3))}}\|(v_{2\epsilon})^\epsilon\|_{\crm{L^3(0,T;L^3_{\rm loc}(\mathbb R^3))}}
+\|v_2\|^2_{\crm{L^3(0,T;L^3_{\rm loc}(\mathbb R^3))}}\|(v_{2\epsilon})^\epsilon-v_2\|_{\crm{L^3(0,T;L^3_{\rm loc}(\mathbb R^3))}}\\
&~~+2\|v_2\|_{\crm{L^3(0,T;L^3_{\rm loc}(\mathbb R^3))}}\|(v_{2\epsilon})^\epsilon-v_2\|_{\crm{L^3(0,T;L^3_{\rm loc}(\mathbb R^3))}}\|(v_{2\epsilon})^\epsilon\|_{\crm{L^3(0,T;L^3_{\rm loc}(\mathbb R^3))}}\Big)\rightarrow 0.
	\end{aligned}
\end{equation*}
Similarly,
$$
\int_{0}^{\infty}\int_{\mathbb R^3}(v_{2\epsilon})^\epsilon\nabla\phi|H_{2\epsilon}|^2\,{\rm d}x{\rm d}s\rightarrow\int_{0}^{\infty}\int_{\mathbb R^3}v_{2}\nabla\phi|H_{2}|^2\,{\rm d}x{\rm d}s.
$$
For the third term on the right-hand of \eqref{approximate energy inequality}, through the same process, we have
\begin{equation*}
	\begin{aligned}
\int_{0}^{\infty}\int_{\mathbb R^3}(v_{2\epsilon}H_{2\epsilon})(H_{2\epsilon})^\epsilon\nabla\phi\,{\rm d}x{\rm d}s\rightarrow\int_{0}^{\infty}\int_{\mathbb R^3}(v_2H_2)H_2\nabla\phi \,{\rm d}x{\rm d}s.
	\end{aligned}
\end{equation*}
In the following, we focus on the last two terms of \eqref{approximate energy inequality}. Observing that
\begin{equation*}
\begin{aligned}
&\Big|\int_{0}^{\infty}\int_{\mathbb R^3}v_{2\epsilon}\nabla v_1v_{2\epsilon}\phi \,{\rm d}x{\rm d}s-\int_{0}^{\infty}\int_{\mathbb R^3}v_2\nabla v_1v_2\phi \,{\rm d}x{\rm d}s\Big|\\
&\leq\int_{0}^{\infty}\int_{\mathbb R^3}\big|(v_{2\epsilon}-v_2)\nabla v_1v_{2\epsilon}\phi \big|\,{\rm d}x{\rm d}s+\int_{0}^{\infty}\int_{\mathbb R^3}\big|v_2\nabla v_1(v_{2\epsilon}-v_2)\phi\,{\rm d}x{\rm d}s\big|\\
&\leq\|v_{2\epsilon}-v_2\|_{\crm{L^3(0,T;L^3_{\rm loc}(\mathbb R^3))}}\|v_{2\epsilon}\|_{\crm{L^3(0,T;L^3_{\rm loc}(\mathbb R^3))}}\|\nabla v_1\phi\|_{\crm{L^3(0,T;L^3_{\rm loc}(\mathbb R^3))}}\\
&+\|v_{2\epsilon}-v_2\|_{\crm{L^3(0,T;L^3_{\rm loc}(\mathbb R^3))}}\|v_2\|_{\crm{L^3(0,T;L^3_{\rm loc}(\mathbb R^3))}}\|\nabla v_1\phi\|_{\crm{L^3(0,T;L^3_{\rm loc}(\mathbb R^3))}}\rightarrow 0.
\end{aligned}
\end{equation*}
Thus, we can obtain
\begin{equation*}
	\begin{aligned}
&\int_{0}^{\infty}\int_{\mathbb R^3}v_1\otimes v_{2\epsilon}:(\nabla v_{2\epsilon}\phi+v_{2\epsilon}\otimes\nabla\phi)\,{\rm d}x{\rm d}s=-\int_{0}^{\infty}\int_{\mathbb R^3}v_{2\epsilon}\nabla v_1v_{2\epsilon}\phi \,{\rm d}x{\rm d}s\\
&\rightarrow -\int_{0}^{\infty}\int_{\mathbb R^3}v_2\nabla v_1v_2\phi \,{\rm d}x{\rm d}s=\int_{0}^{\infty}\int_{\mathbb R^3}v_1\otimes v_2(\nabla v_2\phi+v_2\otimes\nabla\phi) \,{\rm d}x{\rm d}s.
	\end{aligned}
\end{equation*}
Similarly, we have
\begin{equation*}
	\begin{aligned}
		&\int_{0}^{\infty}\int_{\mathbb R^3}v_{2\epsilon}\otimes v_1:(\nabla v_{2\epsilon}\phi+v_{2\epsilon}\otimes\nabla\phi)\,{\rm d}x{\rm d}s=-\int_{0}^{\infty}\int_{\mathbb R^3}v_1\nabla v_{2\epsilon}v_{2\epsilon}\phi \,{\rm d}x{\rm d}s\\
&\rightarrow-\int_{0}^{\infty}\int_{\mathbb R^3}v_1\nabla v_2v_2\phi \,{\rm d}x{\rm d}s=\int_{0}^{\infty}\int_{\mathbb R^3}v_2\otimes v_1(\nabla v_2\phi+v_{2\epsilon}\otimes\nabla\phi)\,{\rm d}x{\rm d}s.
	\end{aligned}
\end{equation*}
Going through the same process, we assert that the last two terms of \eqref{approximate energy inequality} converge. Now we consider the pressure term, which is more complex. It follows from \eqref{smooth equation} that
\begin{align*}
 \Pi^\epsilon
 &= -\frac{1}{3}\big((v_{2\epsilon})^\epsilon v_{2\epsilon}+2v_{2\epsilon}v_1+|v_1|^2-(H_{2\epsilon})^\epsilon H_{2\epsilon}-2 H_{2\epsilon}H_1-|H_1|^2\big)\\
 &+\mathrm{p.v.}\int_{B(2)}K_{ij}(x-y)N_{ij}^{\epsilon}(y,s)\mathrm{d}y + \mathrm{p.v.}\int_{\R^3\setminus B(2)}[K_{ij}(x-y)-K_{ij}(-y)]N_{ij}^{\epsilon}(y,s)\mathrm{d}y,
\end{align*}
where
$$K(x)= \frac{1}{4\pi|x|},\quad K_{ij}= \partial_{ij}K,$$
\begin{align*}
N_{ij}^{\epsilon}(y,t)&=(v_{2\epsilon})_{i}^\epsilon (v_{2\epsilon})_j+(v_{2\epsilon})_i(v_1)_j+(v_1)_i(v_{2\epsilon})_j+(v_1)_i(v_1)_j\\
&-(H_{2\epsilon})_i^\epsilon (H_{2\epsilon})_j- (H_{2\epsilon})_i(H_1)_j-(H_1)_i(H_{2\epsilon})_j-(H_1)_i(H_1)_j.
\end{align*}
We need to prove that
$$\lim_{\epsilon\rightarrow 0}\|\Pi^\epsilon-\Pi\|_{\crm{L^{\frac{3}{2}}(0,T;L^{\frac{3}{2}}_{\rm loc}(\R^3))}}=0,$$
where
\begin{align*}
\Pi
 &=-\frac{1}{3}\big(|v_2|^2+2v_2v_1+|v_1|^2-|H_2|^2-2 H_2H_1-|H_1|^2\big)\\
 &+\mathrm{p.v.}\int_{B(2)}K_{ij}(x-y)N_{ij}(y,s)\mathrm{d}y + \mathrm{p.v.}\int_{\R^3\setminus B(2)}[K_{ij}(x-y)-K_{ij}(-y)]N_{ij}(y,s)\mathrm{d}y,
\end{align*}
\begin{align*}
 N_{ij}(y,t)&=(v_2)_i (v_2)_j+(v_2)_i(v_1)_j+(v_1)_i(v_2)_j+(v_1)_i(v_1)_j\\
&-(H_2)_i(H_2)_j- (H_2)_i(H_1)_j-(H_1)_i(H_2)_j-(H_1)_i(H_1)_j.
\end{align*}
To achieve this, \crm{we first notice that}
\begin{align*}
&\|N^\epsilon_{ij}-N_{ij}\|_{\crm{L^{\frac{3}{2}}(0,T;L^{\frac{3}{2}}_{\rm loc}(\R^3))}}\\
&\leq\|(v_{2\epsilon})^\epsilon_i(v_{2\epsilon}-v_2)_j\|_{\crm{L^{\frac{3}{2}}(0,T;L^{\frac{3}{2}}_{\rm loc}(\R^3))}}
+\|((v_{2\epsilon})^\epsilon-v_2)_i(v_2)_j\|_{\crm{L^{\frac{3}{2}}(0,T;L^{\frac{3}{2}}_{\rm loc}(\R^3))}}\\
&~~+\|(H_{2\epsilon})^\epsilon_i(H_{2\epsilon}-H_2)_j\|_{\crm{L^{\frac{3}{2}}(0,T;L^{\frac{3}{2}}_{\rm loc}(\R^3))}}+\|((H_{2\epsilon})^\epsilon-H_2)_i(H_2)_j\|_{\crm{L^{\frac{3}{2}}(0,T;L^{\frac{3}{2}}_{\rm loc}(\R^3))}}\\
&~~+\|(v_{2\epsilon}-v_2)_i(v_1)_j\|_{\crm{L^{\frac{3}{2}}(0,T;L^{\frac{3}{2}}_{\rm loc}(\R^3))}}+\|(H_{2\epsilon}-H_2)_i(H_1)_j\|_{\crm{L^{\frac{3}{2}}(0,T;L^{\frac{3}{2}}_{\rm loc}(\R^3))}}\\
&\leq\|(v_{2\epsilon})^\epsilon\|_{L^3(0,T;L^3_{\rm loc}(\R^3))}\|v_{2\epsilon}-v_2\|_{L^3(0,T;L^3_{\rm loc}(\R^3))}+\|v_{2\epsilon}\|_{L^3(0,T;L^3_{\rm loc}(\R^3))}\|(v_{2\epsilon})^\epsilon-v_2\|_{L^3(0,T;L^3_{\rm loc}(\R^3))}\\
&~~+\|(H_{2\epsilon})^\epsilon\|_{L^3(0,T;L^3_{\rm loc}(\R^3))}\|H_{2\epsilon}-H_2\|_{L^3(0,T;L^3_{\rm loc}(\R^3))}+\|H_{2\epsilon}\|_{L^3(0,T;L^3_{\rm loc}(\R^3))}\|(H_{2\epsilon})^\epsilon-H_2\|_{L^3(0,T;L^3_{\rm loc}(\R^3))}\\
&~~+\|v_1\|_{L^3(0,T;L^3_{\rm loc}(\R^3))}\|v_{2\epsilon}-v_2\|_{L^3(0,T;L^3_{\rm loc}(\R^3))}+\|H_1\|_{L^3(0,T;L^3_{\rm loc}(\R^3))}\|H_{2\epsilon}-H_2\|_{L^3(0,T;L^3_{\rm loc}(\R^3))}
\rightarrow 0
\end{align*}
as $\epsilon\rightarrow 0$. \crm{Let the integer $n>0$} such that the above local convergence holds in $\crm{L^{\frac{3}{2}}(B(2^n)\times(2^{-n},t))}$, then for \crm{any integer} $m>n\gg1$, it follows that
\begin{align*}
&\Pi^\epsilon-\Pi\\
 &\quad= -\frac{1}{3}\mathrm{tr}\big(N^{\epsilon}-N\big)
 +\int_{B(2)}K_{ij}(x-y)(\tilde N_{ij}^{\epsilon}
 -N_{ij})\mathrm{d}y\\
 &\quad\quad+ \left[\int_{B(2^{n+1})\setminus B(2)}+\int_{B(2^{m})\setminus B(2^{n+1})} \right][K_{ij}(x-y)-K_{ij}(-y)](\tilde N_{ij}^{\epsilon}-N_{ij})\mathrm{d}y\\
 &\quad\quad+\int_{\R^3\setminus B(2^{m})}[K_{ij}(x-y)-K_{ij}(-y)](\tilde N_{ij}^{\epsilon}
 -N_{ij})\mathrm{d}y\\
 &\quad\triangleq q_1+ q_2+ q_3+q_4+q_5.
 \end{align*}
\crm{Here, the matrix $N=(N_{ij})_{3\times3}$.}
By Calder\'{o}n-Zygmund inequality, one has
\begin{align}\label{1-22-1}
   \|q_1+ q_2+ q_3\|_{L^{\frac{3}{2}}L^{\frac{3}{2}}(2^{-n},t;B(2^n))}\leq \|\tilde N^{\epsilon}-N\|_{L^{\frac{3}{2}}L^{\frac{3}{2}}(2^{-n},t;B(2^{n+1}))},
 \end{align}
and
  \begin{align}\label{1-22-2}
   \|q_4\|_{L^{\frac{3}{2}}L^{\frac{3}{2}}(2^{-n},t;B(2^n))}\leq \|\tilde N^{\epsilon}-N\|_{L^{\frac{3}{2}}L^{\frac{3}{2}}(2^{-n},t;B(2^{m}))}.
 \end{align}
\crm{On the other hand,} notice that
$$ |K_{ij}(x-y)-K_{ij}(-y)|\leq \frac{|x|}{|y|^4},\quad\quad x\in B(2^n)$$
for any $\crm{y\in}\R^3\setminus B(2^{m}) \subset \bigcup_{x_0 \in \R^3\setminus B(2^{m})} B(x_0,1)\subset \R^3\setminus B(2^{m}-1)$, from which we can obtain
\crm{
\begin{align*}
&\Big\|\int_{\mathbb R^3\setminus B(2^m)}[K_{ij}(x-y)-K_{ij}(-y)]( N_{ij}^{\epsilon}-N_{ij})\mathrm{d}y\Big\|_{L^{\frac{3}{2}}(B(2^n)\times(2^{-n},t))}\\
&~~\leq \Big\|\int_{\bigcup_{x_0 \in \R^3\setminus B(2^{m})} B(x_0,1)}\frac{|x|}{|y|^4}
(N_{ij}^{\epsilon}-N_{ij})(y,s)\mathrm{d}y\Big\|
_{L^{\frac{3}{2}}(B(2^n)\times(2^{-n},t))}\\
&~~\leq 2^{3n}\Big\|\int_{\bigcup_{x_0 \in \R^3\setminus B(2^{m})} B(x_0,1)}\frac{1}{|y|^4}
(N_{ij}^{\epsilon}-N_{ij})(y,s)\mathrm{d}y\Big\|
_{L^{\frac{3}{2}}(2^{-n},t)}\\
&~~\leq  2^{3n}\Big\|\left(\int_{|y|>2^m-1}|y|^{-12}\mathrm{d}y
\right)^{\frac{1}{3}}\left(\int_{\R^3}
|N_{ij}^{\epsilon}-N_{ij}|^{\frac{3}{2}}(y,s)
\mathrm{d}y\right)^{\frac{2}{3}}\Big\|_{L^{\frac{3}{2}}(2^{-n},t)}\\
&~~\leq \frac{2^{3n}}{(2^m-1)^3}\Big(\|v\|^2_{L^{3}(\R^3\times(2^{-n},t))}
+\|H\|^2_{L^{3}(\R^3\times(2^{-n},t))}\Big).
 \end{align*}}
 Therefore, for any small \crm{$\varepsilon$}, there exist \crm{$m$,} depending only on \crm{$\varepsilon$} such that
\begin{align}\label{1-22-3}
\|q_5\|_{\crm{L^{\frac{3}{2}}(B(2^n)\times(2^{-n},t))}}< \crm{\varepsilon}.
\end{align}
By combining \eqref{1-22-1},\eqref{1-22-2} and \eqref{1-22-3}, one has
$$\lim_{\epsilon\rightarrow 0}\|\Pi^\epsilon-\Pi\|_{\crm{L^{\frac{3}{2}}(0,T; L_{\rm loc}^{\frac{3}{2}}(\R^3))}}=0.$$
Then for the pressure term $\int_{0}^{\infty}\int_{\mathbb R^3}\Pi^\epsilon v_{2\epsilon}\nabla\phi \,{\rm d}x{\rm d}s$, we can get
\begin{align*}
&\Big|\int_{0}^{\infty}\int_{\mathbb R^3}\Pi^\epsilon v_{2\epsilon}\nabla\phi \,{\rm d}x{\rm d}s-\int_{0}^{\infty}\int_{\mathbb R^3}\Pi v_2\nabla\phi \,{\rm d}x{\rm d}s\Big|\\
&\leq \|\nabla \phi\|_{\crm{L^\infty(\R^3\times(0,\infty))}}
\big(\|\Pi^\epsilon\|_{\crm{L^{\frac{3}{2}}(0,T; L_{\rm loc}^{\frac{3}{2}}(\R^3))}}\|v_{2\epsilon}-v_2\|_{\crm{L^{3}(0,T; L_{\rm loc}^{3}(\R^3))}}\\
&+\|\Pi^\epsilon-\Pi\|_{\crm{L^{\frac{3}{2}}(0,T; L_{\rm loc}^{\frac{3}{2}}(\R^3))}}\|v_2\|_{\crm{L^{3}(0,T; L_{\rm loc}^{3}(\R^3))}}\big)\rightarrow 0.
\end{align*}
\crm{To conclusion, the pair} $(v_2,H_2)$ satisfies the following local energy inequality
\begin{align*}\label{1-4-2}
&\int_{\mathbb{R}^3} \phi(|v_2|^2+|H_2|^2) \,{\rm d}x+2 \int_0^\infty \int_{\mathbb{R}^3} \phi(|\nabla v_2|^2+|\nabla H_2|^2) \,{\rm d}x \leq \sum_{i=1}^{7}I_i,
\end{align*}
\crm{for any $\phi\in C^\infty_0(\mathbb R^3\times(0,T))$. Here}
\begin{align*}
&I_1=\int_{0}^{\infty} \int_{\mathbb R^3}(|v_2|^2+|H_2|^2)\partial_t\phi\,\mathrm{d}x\mathrm{d}s,\\
&I_2=\int_{0}^{\infty}\int_{\mathbb R^3} \Big((|v_2|^2+|H_2|^2)\Delta\phi+(|v_2|^2+|H_2|^2)v_2\nabla\phi
-2v_2H_2(H_2\cdot\nabla\phi)\Big)\,{\rm d}x{\rm d}s,\\
&I_3=\int_{0}^{\infty}\int_{\mathbb R^3}\big(v_1\otimes v_2+v_2\otimes v_1+v_1\otimes v_1-H_1\otimes H_2-H_2\otimes H_1-H_1\otimes H_1\big):\nabla v_2\phi\,{\rm d}x{\rm d}s,\\
&I_4=\int_{0}^{\infty}\int_{\mathbb R^3}\big(H_1\otimes v_2+H_2\otimes v_1+H_1\otimes v_1-v_1\otimes H_2-v_2\otimes H_1-v_1\otimes H_1\big):\nabla H_2\phi\,{\rm d}x{\rm d}s,\\
&I_5=\int_{0}^{\infty}\int_{\mathbb R^3}\big(v_1\otimes v_2+v_2\otimes v_1+v_1\otimes v_1-H_1\otimes H_2-H_2\otimes H_1-H_1\otimes H_1\big):v_2\otimes\nabla\phi\,{\rm d}x{\rm d}s,\\
&I_6=\int_{0}^{\infty}\int_{\mathbb R^3}\big(H_1\otimes v_2+H_2\otimes v_1+H_1\otimes v_1-v_1\otimes H_2-v_2\otimes H_1-v_1\otimes H_1\big):H_2\otimes\nabla\phi\,{\rm d}x{\rm d}s,\\
&I_7=2\int_{0}^{\infty}\int_{\mathbb R^3}\Pi v_2\nabla\phi \,{\rm d}x{\rm d}s.
\end{align*}

\textbf{Step (iii):}~ {\em Prove that the global energy inequality \eqref{global energy inequality} holds.} To obtain the desired result, we take
$$\phi\triangleq\psi(t)\varphi_{R}(x),\quad where \quad \psi(t)\in{C}_{0}^{\infty}((0,T)),
\varphi_{R}(x)\in{C}_{0}^{\infty}(\mathbb R^3)$$
and $\varphi_{R}(x)$ is a smooth function such that
$$
\varphi_{R}(x)=\begin{cases}
1, & x \in B(R),\\
0, & x\in \R^3\backslash B(2R),
\end{cases}\quad |\nabla\varphi_{R}|\leq\frac{C}{R},\quad\text{and }\quad|\nabla^{2}\varphi_{R}
|\leq\frac{C}{R^{2}}.
$$
\crm{where $C$ is a constant independent of $R$.} \eqref{strong and weak convergence}  demonstrates that
\begin{equation}\label{v_2H_2estimates}
(v_2,H_2)\in L^\infty(0,T;L^2(\mathbb R^3)) \cap L^2(0,T;\dot{H}^1(\mathbb R^3)).
\end{equation}
This implies that
\begin{equation*}
\begin{aligned}
&\phi(|v_2|^2+|H_2|^2)\leq \psi(|v_2|^2+|H_2|^2)\in L^1(\R^3)\\
&\phi(|v_2|^2+|H_2|^2)\rightarrow \psi(|v_2|^2+|H_2|^2),\quad\text{as}\quad R\rightarrow\infty.
\end{aligned}
\end{equation*}
By the Dominated Convergence Theorem, one has
\begin{align*}
\lim_{R\rightarrow\infty}\int_{\mathbb{R}^{3}}(|v_2|^2+|H_2|^2)\phi\mathrm{d}x
     =\int_{\mathbb{R}^{3}}(|v_2|^2+|H_2|^2)\psi(t)\mathrm{d}x.
\end{align*}
Similarly,
\begin{align*}
\lim_{R\rightarrow\infty}\int_{0}^{\infty}\int_{\mathbb{R}^{3}}(|\nabla v_2|^2+|\nabla H_2|^2)\phi\mathrm{d}x\mathrm{d}s=
     \int_{0}^{\infty}\int_{\mathbb{R}^{3}}(|\nabla v_2|^2+|\nabla H_2|^2)\psi(s)\mathrm{d}x\mathrm{d}s.
     \end{align*}
For $I_1$, similarly, based on \eqref{v_2H_2estimates} and the Dominated Convergence Theorem, we have
$$\lim_{R\rightarrow\infty}I_1=\int_{0}^{\infty} \int_{\mathbb R^3}(|v_2|^2+|H_2|^2)\partial_t\psi\mathrm{d}x\mathrm{d}s.$$
For $I_3,I_4$, according to \eqref{v_2H_2estimates}, Remark \ref{remark} and H$\mathrm{\ddot{o}}$lder's inequalities, we derive that
\begin{equation*}
\begin{aligned}
\lim_{R\rightarrow\infty}(I_3+I_4)&=\int_{0}^{\infty}\int_{\mathbb R^3}\big(v_1\otimes v_2+v_2\otimes v_1+v_1\otimes v_1\\
&-H_1\otimes H_2-H_2\otimes H_1-H_1\otimes H_1\big):\nabla v_2\psi\,{\rm d}x{\rm d}s\\
&+\int_{0}^{\infty}\int_{\mathbb R^3}\big(H_1\otimes v_2+H_2\otimes v_1+H_1\otimes v_1\\
&-v_1\otimes H_2-v_2\otimes H_1-v_1\otimes H_1\big):\nabla H_2\psi\,{\rm d}x{\rm d}s.
\end{aligned}
\end{equation*}
Now we claim $\lim_{R\to\infty}I_2=0$. In fact, it is clear that from \eqref{v_2H_2estimates},
\begin{equation*}
\begin{aligned}
&\lim_{R\rightarrow\infty}\Big|\int_0^\infty\int_{\R^3}(|v_2|^2
+|H_2|^2)\psi(t)\Delta\varphi_R\mathrm{d}x\mathrm{d}s\Big|\leq \lim_{R\rightarrow\infty}\frac{C}{R^2}=0
\end{aligned}
\end{equation*}
and
\begin{equation*}
\begin{aligned}
&\lim_{R\rightarrow\infty}\big|\int_{0}^{\infty}\int_{\mathbb R^3}(|v_2|^2+|H_2|^2)v_2\crm{\psi(t)\nabla\varphi_R} \,{\rm d}x{\rm d}s\big|\\
&\leq \lim_{R\rightarrow\infty}\frac{c}{R}\big(\|v_2\|^3_{L^3(0,T;L^3(\mathbb R^3))}+\|H_2\|^2_{L^3(0,T;L^3(\mathbb R^3))}\|v_2\|_{L^3(0,T;L^3(\mathbb R^3))}\big)=0.
\end{aligned}
\end{equation*}
Similar, by the \eqref{v_2H_2estimates}
$$\lim_{R\rightarrow\infty}\big|\int_{0}^{\infty}\int_{\mathbb R^3}v_2H_2(H_2\cdot\nabla\phi)\,{\rm d}x{\rm d}s\big|=0,$$
so we conclude the desired result.
Based on Remark \ref{remark} and \eqref{v_2H_2estimates}, $I_5,I_6$ can be handled in the same way, from which we obtain
$$\lim_{R\rightarrow\infty}(I_5+I_6)=0.$$
\crm{Next, we turn to consider the term $I_7$, and} divide it into four parts
\begin{equation*}
	\begin{aligned}
		I_7=\int_{0}^{\infty}\int_{\mathbb R^3}\Pi v_2\nabla\phi \,{\rm d}x{\rm d}s=\sum_{j=1}^4I_{7j},
	\end{aligned}
\end{equation*}
where
 $$I_{7j}=\int_{0}^{\infty}\int_{\mathbb R^3}v_2\nabla\phi(\Pi^j-[\Pi^j]_{B(2R)})\,{\rm d}x{\rm d}s.$$
 Thus, by the Lemma \ref{pressure estimate}, one has

 \begin{equation*}
 	\begin{aligned}
 		|I_{71}|&\leq \frac{C}{R}\|v_2\|_{\crm{L^3(B(2R)\times(0,T))}}\Big(\crm{\int_{0}^{T}
 \int_{B(2R)}}|\Pi^1-[\Pi^1]_{\crm{B(2R)}}|^{\frac{3}{2}}\,{\rm d}x{\rm d}s\Big)^{\frac{2}{3}}\\
 		&\leq \frac{C}{R}\|v_2\|_{\crm{L^3(B(2R)\times(0,T))}}R^{\frac{1}{3}}
 \Big(\crm{\int_{0}^{T}
 (\int_{B(2R)}}|\nabla\Pi^1|^\frac{9}{8}\,{\rm d}x)^\frac{4}{3}\,{\rm d}s\Big)^\frac{2}{3}\rightarrow 0,
 	\end{aligned}
 \end{equation*}
\begin{equation*}
	\begin{aligned}
		|I_{72}|&\leq \frac{C}{R}\|v_2\|_{\crm{L^3(B(2R)\times(0,T))}}\Big(\crm{\int_{0}^{T}\int_{B(2R)}}
|\Pi^2-[\Pi^2]_{\crm{B(2R)}}|^{\frac{3}{2}}\,{\rm d}x{\rm d}s\Big)^{\frac{2}{3}}\\
		&\leq \frac{C}{R}\|v_2\|_{\crm{L^3(B(2R)\times(0,T))}}R^\frac{3}{4}
\Big(\crm{\int_{0}^{T}(\int_{B(2R)}}|\nabla\Pi^2|^\frac{4}{3}\,{\rm d}x)^\frac{9}{8}\,{\rm d}s\Big)^\frac{2}{3}\rightarrow 0,
	\end{aligned}
\end{equation*}
\begin{equation*}
	\begin{aligned}
		|I_{73}|&\leq \frac{C}{R}\|v_2\|_{L^3(B(2R)\times(0,T))}\Big(\int_{0}^{T}\int_{B(2R)}
|\Pi^3-[\Pi^3]_{B(2R)}|^{\frac{3}{2}}\,{\rm d}x{\rm d}s\Big)^{\frac{2}{3}}\\
		&\leq \frac{C}{R}\|v_2\|_{L^3(B(2R)\times(0,T))}R^\frac{1}{2}
\Big(\int_{0}^{T}(\int_{B(2R)}|\nabla\Pi^3|^\frac{6}{5}\,{\rm d}x)^\frac{5}{4}\,{\rm d}s\Big)^\frac{2}{3}\rightarrow 0,
	\end{aligned}
\end{equation*}
and
\begin{equation*}
\begin{aligned}
|I_{74}|&\leq \frac{C}{R}\|v_2\|_{L^3(B(2R)\times(0,T))}\Big(\int_{0}^{T}\int_{B(2R)}
|\Pi^4-[\Pi^4]_{B(2R)}|^{\frac{3}{2}}\,{\rm d}x{\rm d}s\Big)^{\frac{2}{3}}\\
&\leq \frac{C}{R}\|v_2\|_{L^3(B(2R)\times(0,T))}R^\frac{3}{4}
\Big(\int_{0}^{T}\int_{B(2R)}|\nabla\Pi^4|^\frac{3}{2}\,{\rm d}x{\rm d}s\Big)^\frac{2}{3}\rightarrow 0,
\end{aligned}
\end{equation*}
as $R\rightarrow\infty$, then one has $$\lim_{R\rightarrow\infty}I_7=0.$$
By combining the above processes, we can obtain
\begin{equation}\label{1-4-3}
\int_{\mathbb{R}^{3}}(|v_2|^2+|H_2|^2)\psi(t)\mathrm{d}x+
  2\int_{0}^{\infty}\int_{\mathbb{R}^{3}}(|\nabla v_2|^2+|\nabla H_2|^2)\psi(s)\mathrm{d}x\mathrm{d}s\leq I_8+ I_9+I_{10},
\end{equation}
where
\begin{align*}
I_{8}&=\int_{0}^{\infty}\int_{\mathbb{R}^{3}}\Big(|v_2|^2+|H_2|^2\Big)\partial_{s}\psi
\mathrm{d}x\mathrm{d}s,\\
I_9&=\int_{0}^{\infty}\int_{\mathbb R^3}\big(v_1\otimes v_2+v_2\otimes v_1+v_1\otimes v_1-H_1\otimes H_2-H_2\otimes H_1-H_1\otimes H_1\big):\nabla v_2\psi\,{\rm d}x{\rm d}s,\\
I_{10}&=\int_{0}^{\infty}\int_{\mathbb R^3}\big(H_1\otimes v_2+H_2\otimes v_1+H_1\otimes v_1-v_1\otimes H_2-v_2\otimes H_1-v_1\otimes H_1\big):\nabla H_2\psi\,{\rm d}x{\rm d}s.
\end{align*}
Besides, we take
$\psi(s)=\psi_{\varepsilon}(s)= \crm{\chi_{t}}(s/\varepsilon)$  in \eqref{1-4-3},
 where \crm{$t>3\varepsilon$ and} $\crm{\chi_t}(s)$ is a smooth function such that $0\leq \crm{\chi_{t}} \leq 1$,
$$
\crm{\chi_{t}}(s)=\begin{cases}
0, & s\leq1,\\
1, & 2\leq s\leq \frac{t-\varepsilon}{\varepsilon},\\
0, & s\geq \frac{t}{\varepsilon}.
\end{cases}
$$
It is clear that
$$
 \int_1^2\partial_s\crm{\chi_{t}}(s)\mathrm{d}s=1 \text{ and }\ \int_{\frac{t-\varepsilon}{\varepsilon}}^{\frac{t}{\varepsilon}}\partial_s\crm{\chi_{t}}(s)\mathrm{d}s=-1.
$$
Thus,
\EQS{\label{1-4-4}
\psi_{\varepsilon}(s)=
\begin{cases}
{0 ,}&\quad {s \leq \varepsilon },\\[2mm]
{1 ,}&\quad {2\varepsilon \leq s\leq t-\varepsilon},\\[2mm]
{0 ,}&\quad {s \geq t }
\end{cases}\ \int_{\varepsilon}^{2\varepsilon}\partial_s\psi_{\varepsilon}(s)
\mathrm{d}s=1 \text{ and }\ \int_{t-\varepsilon}^{t}\partial_s\psi_{\varepsilon}(s)
\mathrm{d}s=-1.
}
Notice that $\psi(t)=0$, therefore,
$$\int_{\mathbb{R}^{3}}(|v_2|^2+|H_2|^2)\psi(t)\mathrm{d}x=0.$$
From \eqref{v_2H_2estimates}, one has
\begin{equation*}
\begin{aligned}
&\psi(s)\int_{\mathbb{R}^{3}}(|\nabla v_2|^2+|\nabla H_2|^2)\mathrm{d}x\leq\int_{\mathbb{R}^{3}}(|\nabla v_2|^2+|\nabla H_2|^2)\mathrm{d}x\in L^1(0,t),\\
&\psi(s)\int_{\mathbb{R}^{3}}(|\nabla v_2|^2+|\nabla
H_2|^2)\mathrm{d}x\rightarrow \int_{\mathbb{R}^{3}}(|\nabla v_2|^2+|\nabla H_2|^2),\quad\text{as}\quad \varepsilon\rightarrow 0.
\end{aligned}
\end{equation*}
Then, by the Dominated Convergence Theorem, we have
\begin{align*}
\lim_{\varepsilon\rightarrow0}\int_{0}^{\infty}\int_{\mathbb{R}^{3}}
(|\nabla v_2|^2+|\nabla
H_2|^2))\psi_\varepsilon(s)
\mathrm{d}x\mathrm{d}s=\crm{\int_{0}^{t}}\int_{\mathbb{R}^{3}}
(|\nabla v_2|^2+|\nabla
H_2|^2)\mathrm{d}x\mathrm{d}s.
\end{align*}
\crm{For $I_9$ and $I_{10}$, notice that}
\begin{equation*}
\begin{aligned}
&\crm{\int_{0}^{t}}\int_{\mathbb R^3}|v_1\otimes v_2:\nabla v_2|\,{\rm d}x{\rm d}s\\
&\leq\crm{\int_{0}^{t}}\|v_1\|_{L^5(\mathbb R^3)}\|v_2\|_{L^{\frac{10}{3}}(\mathbb R^3)}\|\nabla v_2\|_{L^2(\mathbb R^3)}\mathrm{d}s,\\
&\leq\crm{\int_{0}^{t}}\|v_1\|_{L^5(\mathbb R^3)}\|v_2\|^{\frac{2}{5}}_{L^2(\mathbb R^3)}\|\nabla v_2\|^{\frac{8}{5}}_{L^2(\mathbb R^3)}\mathrm{d}s\\
&\leq \|v_1\|_{L^5(0,t;L^5(\mathbb R^3))}\|v_2\|^{\frac{2}{5}}_{L^\infty(0,t;L^2(\mathbb R^3))}\|\nabla v_2\|^{\frac{8}{5}}_{L^2(0,t;L^2(\mathbb R^3))}.
\end{aligned}
\end{equation*}
\crm{The remaining terms of $I_9$ and $I_{10}$ can be estimated as above.  Then, by the Dominated Convergence Theorem, we have}
\begin{align*}
\lim_{\varepsilon\rightarrow0}&I_9=\crm{\int_{0}^{t}}\int_{\mathbb R^3}\big(v_1\otimes v_2+v_2\otimes v_1+v_1\otimes v_1-H_1\otimes H_2-H_2\otimes H_1-H_1\otimes H_1\big):\nabla v_2\,{\rm d}x{\rm d}s,\\
\lim_{\varepsilon\rightarrow0}&I_{10}=\crm{\int_{0}^{t}}\int_{\mathbb R^3}\big(H_1\otimes v_2+H_2\otimes v_1+H_1\otimes v_1-v_1\otimes H_2-v_2\otimes H_1-v_1\otimes H_1\big):\nabla H_2\,{\rm d}x{\rm d}s.
\end{align*}
\crm{
Finally, we turn to investigate the term $I_8$. To do this, we can rewrite the term $I_8$ as
\EQs{
&\int_{0}^{\infty}
\int_{\mathbb{R}^{3}}(|v_2(x,s)|^{2}+|H_2(x,s)|^{2})
\partial_{s}\psi_\varepsilon(s)\,\mathrm{d}x\mathrm{d}s\\
&=
\int_{\varepsilon}^{2\varepsilon}\int_{\R^3}(|v_2(x,s)|^{2}+|H_2(x,s)|^{2})
\partial_{s}\psi_\varepsilon(s)\,\mathrm{d}x\mathrm{d}s\\
&~~~~+\int_{t-\varepsilon}^{t}\int_{\R^3}(|v_2(x,s)|^{2}+|H_2(x,s)|^{2})
\partial_{s}\psi_\varepsilon(s)\,\mathrm{d}x\mathrm{d}s,
}
due to the fact $\partial_{s}\psi_\varepsilon(s)=0$ for $s\in (2\varepsilon,t-\varepsilon)$.}
It is easy to see that by the fact $|\partial_s\psi_\varepsilon|\leq \frac{C}{\varepsilon}$
\EQs{
&\left|\int_{\varepsilon}^{2\varepsilon}\int_{\R^3}(|v_2(x,s)|^{2}+|H_2(x,s)|^{2})
\partial_{s}\psi_\varepsilon(s)\,\mathrm{d}x\mathrm{d}s\right|\\
&\leq\frac{C}{\varepsilon}\left|\int_{\varepsilon}^{2\varepsilon}\int_{\R^3}|v_2(x,s)|^{2}+|H_2(x,s)|^{2}
\mathrm{d}x\mathrm{d}s\right|\\&
\to \int_{\R^3}|v_2(\cdot,0)|^{2}+|H_2(\cdot,0)|^{2}
\mathrm{d}x=0\quad\quad \mbox{as}\quad \varepsilon\to0.
}
On the other hand, from \eqref{1-4-4}, we have for a.e $t\in (0,T)$
\EQs{
&\lim_{\varepsilon\to0}\int_{t-\varepsilon}^{t}\int_{\R^3}(|v_2(x,s)|^{2}+|H_2(x,s)|^{2})
\partial_{s}\psi_\varepsilon(s)\,\mathrm{d}x\mathrm{d}s\\
&=
\lim_{\varepsilon\to0}\int_{t-\varepsilon}^{t}\int_{\R^3}\big\{(|v_2(x,s)|^{2}+|H_2(x,s)|^{2})-
(|v_2(x,t)|^{2}+|H_2(x,t)|^{2})\big\}\partial_{s}
\psi_\varepsilon(s)\,\mathrm{d}x\mathrm{d}s\\
&~~~~-\int_{\R^3}|v_2(x,t)|^{2}+|H_2(x,t)|^{2}\,\mathrm{d}x\\&
=-\int_{\R^3}|v_2(x,t)|^{2}+|H_2(x,t)|^{2}\,\mathrm{d}x.
}
Here, we have used the fact as $\varepsilon\to0$
\EQs{
&\left|\int_{t-\varepsilon}^{t}\int_{\R^3}\big\{(|v_2(x,s)|^{2}+|H_2(x,s)|^{2})-
(|v_2(x,t)|^{2}+|H_2(x,t)|^{2})\big\}\partial_{s}\psi_\varepsilon(s)\,\mathrm{d}x\mathrm{d}s\right|\\&
\leq\frac{C}{\varepsilon}\int_{t-\varepsilon}^{t}\int_{\R^3}\big\{(|v_2(x,s)|^{2}+|H_2(x,s)|^{2})-
(|v_2(x,t)|^{2}+|H_2(x,t)|^{2})\big\}\,\mathrm{d}x\mathrm{d}s\to0.
}
Thus, we conclude
\begin{align*}
\lim_{\varepsilon\rightarrow0}I_8
=-\int_{\mathbb{R}^{3}}|v_2(x,t)|^{2}+|H_2(x,t)|^{2}\,\mathrm{d}x.
\end{align*}
Combining the above discussion, we show the global energy inequality \eqref{global energy inequality} holds.
\end{proof}
\section{Uniqueness}
In this section, we give a proof of Theorem \ref{thm 1.2}. The tool we use in this section is the Gronwall's inequality. To begin with, we present the following
lemma which plays a key role in our proof Theorem \ref{thm 1.2}.
\begin{lem}[\cite{A. Mahalov}]\label{lem1-13}
Assume that $(v, H) \in L^\infty L^2(Q_R(z_0))\cap L^2H^1 (Q_R(z_0))$ and $\Pi\in L_{\frac{3}{2}}(Q_R(z_0))$ satisfy \eqref{EQ-tlh} in the sense of distributions, where $Q_R(z_0)=(t_0-R^2,t_0)\times B(x_0,R)$. Assume, in addition, that
$$ (v, H) \in L^\infty L^3(Q_R(z_0)),$$
then $(v,H)$ is H\"{o}lder continuous on $\overline{Q}_{\frac{1}{2}R}(z_0)$.
\end{lem}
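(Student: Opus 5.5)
The plan is the Escauriaza--Seregin--\v{S}ver\'{a}k blow-up/backward-uniqueness strategy, adapted to the coupled system \eqref{EQ-tlh}; this is the route of \cite{A. Mahalov}. By translation and the scaling $(v,H)(x,t)\mapsto \lambda(v,H)(\lambda x,\lambda^2t)$, $\Pi\mapsto\lambda^2\Pi(\lambda x,\lambda^2 t)$, I reduce to $z_0=0$, $R=1$. Both the hypothesis $\|(v,H)\|_{L^\infty L^3}$ and the conclusion are scale invariant.

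\textbf{Step 1 (suitability).} Interpolating $L^\infty L^3$ with $L^2L^6$ (Sobolev) gives $(v,H)\in L^4(Q_1)$. Hence the products $v\otimes v$, $H\otimes H$, $v\otimes H$ lie in $L^2(Q_1)$. This makes the nonlinear terms in the weak formulation admissible against $(v\phi,H\phi)$ after mollification. I would justify the local energy \emph{equality} in the form \eqref{local energy estimate}, with $(v_1,H_1)=0$ and $\Pi\in L^{3/2}$, by the standard mollification (Lions--Serrin type) argument. So $(v,H,\Pi)$ is suitable, and the CKN $\varepsilon$-regularity criterion for MHD is available. I take that criterion to say: if $r^{-2}\int_{Q_r(z)}(|v|^3+|H|^3+|\Pi|^{3/2})$ is small for some $r$, then $(v,H)$ is H\"older near $z$. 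A point is thus singular only if these scaled quantities stay above $\varepsilon_0$ for all small $r$.

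\textbf{Step 2 (blow-up).} Suppose some $z_1\in\overline{Q}_{1/2}$ is singular. I rescale around $z_1$ with $\lambda_k\to0$ to get $(v^k,H^k,\Pi^k)$ on growing parabolic domains, uniformly bounded in $L^\infty L^3$. The local energy inequality gives uniform local energy bounds. For the pressure, I split it into a Calder\'on--Zygmund part, bounded locally in $L^{\infty}L^{3/2}$, and a harmonic part, bounded via its interior estimates. Aubin--Lions (Lemma \ref{RTtheorem_23}) then gives strong local convergence in $L^3$ to an ancient suitable solution $(u,B)$ on $\R^3\times(-\infty,0)$ with $(u,B)\in L^\infty L^3$. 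The $\varepsilon_0$ lower bound passes to the limit, so $(u,B)$ is still singular at the origin. Moreover $\|(v,H)(\cdot,t)\|_{L^3(B(x_1,\delta))}$ is small for small $\delta$, uniformly in $t$, by absolute continuity together with weak continuity in time. This forces $(u,B)(\cdot,0)=0$, understood in the weak/$L^3$ sense.

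\textbf{Step 3 (main obstacle: Liouville via backward uniqueness).} Using the $L^\infty L^3$ bound and $\varepsilon$-regularity at large scales, I would show that $(u,B)$ and all their derivatives are bounded outside a large ball $|x|>R_0$ for $t\in(-1,0]$. Then the vorticity $\omega=\curl u$ and current $j=\curl B$ satisfy a coupled parabolic system of the form $|\partial_t W+\Delta W|\le C(|W|+|\nabla W|)$ there, with $W(\cdot,0)=0$ for $|x|>R_0$. The Escauriaza--Seregin--\v{S}ver\'{a}k backward-uniqueness Carleman estimates apply componentwise to such systems with bounded coefficients. They give $W\equiv0$ outside a ball, and the unique-continuation Carleman estimate spreads this to all of $\R^3\times(-1,0)$. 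Then $u,B$ are divergence-free, curl-free and in $L^3$, hence zero. This contradicts the singularity of $(u,B)$ at the origin.

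The delicate points I expect are the uniform control of the pressure in the blow-up and verifying the derivative bounds needed for the Carleman inequalities, with the MHD coupling handled as a vector system.
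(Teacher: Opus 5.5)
\textbf{Genuine gap in Step~3.} The paper gives no proof of this lemma; it simply quotes \cite{A. Mahalov}. Your Escauriaza--Seregin--\v{S}ver\'{a}k scheme (suitability, blow-up at a singular point, backward uniqueness) is the right framework, but Step~3 fails as stated.

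Taking the curl of the induction equation gives, for $j=\curl B$,
\[
\partial_t j_i-\Delta j_i=-u\cdot\nabla j_i+B\cdot\nabla\omega_i+\epsilon_{ijk}\big(\partial_jB_l\,\partial_lu_k-\partial_ju_l\,\partial_lB_k\big).
\]
The last term is not bounded by $C(|\omega|+|j|+|\nabla\omega|+|\nabla j|)$. In $\curl(u\cdot\nabla u)$ the analogous quadratic term collapses to $-\omega\cdot\nabla u$. Here, however, the combination is antisymmetric in $(u,B)$ and contains the commutator of the two strain matrices, which neither vorticity nor current detects.

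For instance, take $u=S_1x$ and $B=S_2x$ with $S_1=\mathrm{diag}(1,-1,0)$ and $S_2=e_1\otimes e_2+e_2\otimes e_1$. Then $\omega=j=0$ and $\nabla\omega=\nabla j=0$, yet the term equals $-4e_3$. So $W=(\omega,j)$ does not satisfy $|\partial_tW-\Delta W|\le C(|W|+|\nabla W|)$, and the Carleman backward-uniqueness theorem cannot be applied to it.

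The missing observation is that the induction equation contains no pressure, so $B$ itself can be put into the system. Take $W=(\omega,B,j)$. Then:
\begin{itemize}
\item $|\partial_tB-\Delta B|\le|\nabla u||B|+|u||\nabla B|$;
\item $\partial_t\omega-\Delta\omega=-u\cdot\nabla\omega+\omega\cdot\nabla u+B\cdot\nabla j-j\cdot\nabla B$;
\item the bad term in the $j$-equation is bounded by $C|\nabla u||\nabla B|\le C|\nabla W|$ wherever $u,\nabla u,B,\nabla B$ are bounded.
\end{itemize}
Backward uniqueness then gives $\omega=B=j=0$ in the exterior region.

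\textbf{Two further corrections.} First, in Step~2 the claim that $\|(v,H)(\cdot,t)\|_{L^3(B(x_1,\delta))}$ is small uniformly in $t$ is false in general. If it held, $\varepsilon$-regularity would give regularity at once, making the lemma trivial. What is true, and sufficient, is smallness at the single time $t_1$ of the singular point. To conclude $(u,B)(\cdot,0)=0$ you must combine this with convergence of the rescaled fields in $C([-a^2,0];X)$ for some weak or negative-norm space $X$. That convergence needs a uniform bound on their time derivatives, not just strong $L^3_{\rm loc}$ convergence.

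Second, in Step~3 the unique-continuation Carleman estimate cannot be run across possible singular points of the limit, where $u,\nabla u,B,\nabla B$ are unbounded. So it does not directly spread $W=0$ to all of $\R^3\times(-1,0)$. The fix is as follows:
\begin{itemize}
\item The singular set lies in a bounded region and, by the CKN theory for MHD, has zero one-dimensional parabolic Hausdorff measure, so its time projection is a closed null set.
\item Work on time intervals free of singular times and obtain $u=B=0$ there.
\item This gives $u=B=0$ almost everywhere, which contradicts the singularity at the origin.
\end{itemize}
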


With the above lemma, we can now proceed with the proof of Theorem \ref{thm 1.2}
\begin{proof}[Proof of Theorem \ref{thm 1.2}]
To achieve the uniqueness of the solution, we first consider regularity of the solution. From Lemma \ref{lem1-13}, we can deduce that  for sufficiently small $R$, if $t_0-R^2>0$ and $t_0<T$, then
$$
\|(v,H)\|_{L^\infty(Q_{\frac{1}{2}R}(z_0))}\leq C(R).
$$
Thus, $z_0=(x_0,t_0)$ is a regular point of $(v,H)$, which implies
 $(v,H)\in L^\infty(\delta,T;L^\infty(\mathbb R^3))$ for any $\delta>0$.
Next, we claim that
$$
(v_2,H_2)\in W^{2,1}_2(\delta,T;\mathbb R^3),\,(\nabla v_2,\nabla H_2)\in L^\infty (\delta,T;L^2(\mathbb R^3)),
$$
and
$$\nabla\Pi\in L^2(\delta,T;L^2(\mathbb R^3)).$$
Indeed, by Lemma \ref{Solonnikove estimates} and heat kernel estimates, one has
\begin{align*}
		\int_{\delta}^{T}\|\partial_t v_2\|^2_{L^2(\mathbb R^3)}\,{\rm d}t+\int_{\delta}^{T}\|\nabla^2 v_2\|^2_{L^2(\mathbb R^3)}\,{\rm d}t&+\int_{\delta}^{T}\|\nabla\Pi\|^2_{L^2(\mathbb R^3)}\,{\rm d}t\\
&\leq C	\int_{\delta}^{T}\|H\cdot\nabla H-v\cdot\nabla v\|^2_{L^2(\mathbb R^3)}\,{\rm d}t,
\end{align*}
\begin{align*}
\int_{\delta}^{T}\|\partial_t H_2\|^2_{L^2(\mathbb R^3)}\,{\rm d}t+\int_{\delta}^{T}\|\nabla^2 H_2\|^2_{L^2(\mathbb R^3)}\,{\rm d}t\leq C	\int_{\delta}^{T}\|H\cdot\nabla v-v\cdot\nabla H\|^2_{L^2(\mathbb R^3)}\,{\rm d}t.	\end{align*}
Then by the heat kernel estimates, we can obtain
\begin{equation*}\label{vnablav}
\begin{aligned}
   &\int_{\delta}^{T}\|v\cdot\nabla v\|_{L^{2}(\mathbb{R}^{3})}^{2}\mathrm{d}t\\
   &\leq \int_{\delta}^{T}\|v\cdot\nabla v_1\|_{L^{2}(\mathbb{R}^{3})}^{2}\mathrm{d}t+ \int_{\delta}^{T}\|v\cdot\nabla v_2\|_{L^{2}(\mathbb{R}^{3})}^{2}\mathrm{d}t\\
   &\leq\int_{\delta}^{T}\|v\|_{L^{4}(\mathbb{R}^{3})}^{2}\mathrm{d}t+ \int_{\delta}^{T}\|\nabla v_1\|_{L^{4}(\mathbb{R}^{3})}^{2}\mathrm{d}t+ \|v\|^{2}_{L^\infty(\delta,T;\mathbb R^3)}\int_{\delta}^{T}\|\nabla v_2\|_{L^{2}(\mathbb{R}^{3})}^{2}\mathrm{d}t\\
   &\leq\int_{\delta}^{T}\big(\|v_1\|_{L^{4}(\mathbb{R}^{3})}^{2}+\|v_2\|_{L^{4}(\mathbb{R}^{3})}^{2}\big)\mathrm{d}t+ \int_{\delta}^{T}\|\nabla v_1\|_{L^{4}(\mathbb{R}^{3})}^{2}\mathrm{d}t\\
   &~~~~~~~+ \|v\|^{2}_{L^\infty(\delta,T;\mathbb R^3)}\int_{\delta}^{T}\|\nabla v_2\|_{L^{2}(\mathbb{R}^{3})}^{2}\mathrm{d}t\\
   &\leq C\int_{\delta}^{T}(t^{-\frac{1}{12}}+t^{-\frac{5}{4}})\|v_0\|^2_{L^3(\mathbb R^3)}\mathrm{d}t+\int_{\delta}^{T}\|v_2\|_{L^{2}(\mathbb{R}^{3})}^{2}\mathrm{d}t\\
   &~~~~~~~+\big(\|v\|^{2}_{L^\infty(\delta,T;\mathbb R^3)}+1\big)\int_{\delta}^{T}\|\nabla v_2\|_{L^{2}(\mathbb{R}^{3})}^{2}\mathrm{d}t<\infty.
\end{aligned}
\end{equation*}
Similarly, we also have
$$H\cdot\nabla H,H\cdot\nabla v,v\cdot\nabla H \in L^{2}(\delta,T;\mathbb{R}^{3}).$$
Thus,
$$\partial_t v_2,~\partial_t H_2,~\nabla^2 v_2,~\nabla^2 H_2,~\nabla\Pi \in L^{2}(\delta,T;\mathbb R^3).$$
 Next, we multiply $\eqref{v_2}_1$ by $-\Delta v_2$ and $\eqref{v_2}_2$ by $-\Delta H_2$, then integrate over $\mathbb R^3$ to obtain
\begin{equation*}
	\begin{aligned}
		&\frac{1}{2}\frac{d}{dt}\big(\|\nabla v_2\|^2_{L^2(\mathbb R^3)}+\|\nabla H_2\|^2_{L^2(\mathbb{R}^{3})}\big)+\|\Delta v_2\|^2_{L^2(\mathbb R^3)}+\|\Delta H_2\|^2_{L^2(\mathbb R^3)}\\
		&\leq\|\Delta v_2\|_{L^2(\mathbb R^3)}\big(\|v\cdot\nabla v\|_{L^{2}(\mathbb{R}^{3})}+\|H\cdot\nabla H\|_{L^{2}(\mathbb{R}^{3})}\big)\\
&~~+\|\Delta H_2\|_{L^2(\mathbb R^3)}\big(\|v\cdot\nabla H\|_{L^{2}(\mathbb{R}^{3})}+\|H\cdot\nabla v\|_{L^{2}(\mathbb{R}^{3})}\big)\\
		&\leq \|\Delta v_2\|^2_{L^2(\mathbb R^3)}+\|\Delta H_2\|^2_{L^2(\mathbb R^3)}+\|\nabla v_2\|^2_{L^2(\mathbb R^3)}+\|\nabla H_2\|^2_{L^2(\mathbb R^3)}\\
&+\|v\cdot\nabla v_1\|^2_{L^{2}(\mathbb{R}^{3})}+\|H\cdot\nabla H_1\|^2_{L^{2}(\mathbb{R}^{3})}+\|v\cdot\nabla H_1\|^2_{L^2(\mathbb{R}^{3})}+\|H\cdot\nabla v_1\|^2_{L^2(\mathbb{R}^{3})},
	\end{aligned}
\end{equation*}
from which we further obtain
\begin{equation*}
\begin{aligned}
&\frac{1}{2}\frac{d}{dt}\Big(\|\nabla v_2\|^2_{L^2(\mathbb R^3)}+\|\nabla H_2\|^2_{L^2(\mathbb R^3)}\Big)\\
&\leq \|\nabla H\|^2_{L^2(\mathbb R^3)}+\|\nabla v\|^2_{L^2(\mathbb R^3)}+\|v\cdot\nabla v_1\|^2_{L^{2}(\mathbb{R}^{3})}\\
&+\|H\cdot\nabla H_1\|^2_{L^{2}(\mathbb{R}^{3})}+\|v\cdot\nabla H_1\|^2_{L^2(\mathbb{R}^{3})}+\|H\cdot\nabla v_1\|^2_{L^2(\mathbb{R}^{3})}.
\end{aligned}
\end{equation*}
This together with the Lemma \ref{Gronwall inequality} implies $\nabla v_2,\nabla H_2\in L^\infty (\delta,T;L^2(\mathbb R^3))$.

Next, we consider the uniqueness of the solution $(v,H)$. We first multiply $\eqref{v_2}_1$ by $v_2$ and $\eqref{v_2}_2$ by $H_2$, then integrate over $(0,t)\times \mathbb R^3$ to obtain
\begin{equation*}
\begin{aligned}
&\frac{1}{2}\int_{\mathbb R^3}|v_2(x,t)|^2+|H_2(x,t)|^2,{\rm d}x+\int_{0}^{t}\int_{\mathbb R^3}\big(|\nabla v_2|^2+|\nabla H_2|^2\big)\,{\rm d}x{\rm d}s\\
&=\int_{0}^{t}\int_{\mathbb R^3}(v\otimes v-H\otimes H):\nabla v_2+(H\otimes v-v\otimes H):\nabla H_2\,{\rm d}x{\rm d}s,
\end{aligned}
\end{equation*}
holds for any $t\in (0,T)$. Similarly, multiplying $\eqref{v_2}_1$ by $\tilde v_2$ and $\eqref{v_2}_2$ by $\tilde H_2$, then integrating over $(0,t)\times\mathbb R^3$, one has
\begin{align*}
&\int_{0}^{t}\int_{\mathbb R^3}\big(\tilde v_2\cdot\partial_t v_2+\tilde H_2\cdot\partial_t H_2\big)\,{\rm d}x{\rm d}s+\int_{0}^{t}\int_{\mathbb R^3}\big(\nabla v_2:\nabla \tilde v_2+\nabla H_2:\nabla\tilde H_2 \big)\,{\rm d}x{\rm d}s\\
&+\int_{0}^{t}\int_{\mathbb R^3}\big(H\otimes H-v\otimes v\big):\nabla \tilde v_2+\big(H\otimes v-v\otimes H\big):\nabla\tilde H_2\,{\rm d}x{\rm d}s=0.
\end{align*}
Let $\omega=\tilde v_2-v_2,~~\psi=\tilde H_2-H_2$, it is easy to derive
\begin{equation}\label{1-12-1}
\begin{aligned}
		&\int_{0}^{t}\int_{\mathbb R^3}\big(\tilde v_2\cdot\partial_t v_2+\tilde H_2\cdot\partial_t H_2 \big)\,{\rm d}x{\rm d}s-\frac{1}{2}\int_{\mathbb R^3}\big(|v_2(x,t)|^2+|H_2(x,t)|^2\big)\,{\rm d}x\\
&+\int_{0}^{t}\int_{\mathbb R^3}
\big(H\otimes H:-v\otimes v\big):\nabla w+\big(H\otimes v:-v\otimes H\big):\nabla \psi\,{\rm d}x{\rm d}s\\
&+\int_{0}^{t}\int_{\mathbb R^3}
\big(\nabla v_2:\nabla\omega+\nabla H_2\nabla\psi\big)\,{\rm d}x{\rm d}s=0.
\end{aligned}
\end{equation}
Besides, we also have
\begin{equation}\label{1-12-2}
\begin{aligned}
&\int_{\mathbb R^3}\big(\tilde v_2(x,t)\cdot v_2(x,t)+\tilde H_2(x,t)\cdot H_2(x,t)\big)\,{\rm d}x-\int_{0}^{t}\int_{\mathbb R^3}\big(\tilde v_2\cdot\partial_t v_2+\tilde H_2\cdot\partial_t H_2\big)\,{\rm d}x{\rm d}s\\
&+\int_{0}^{t}\int_{\mathbb R^3}\Big(\nabla\tilde v_2:\nabla v_2+\nabla\tilde H_2:\nabla H_2+\big(\tilde H\otimes\tilde H-\tilde v\otimes\tilde v\big):\nabla v_2\\
&+\big(\tilde H\otimes\tilde v-\tilde v\otimes\tilde H\big):\nabla H_2\Big)\,{\rm d}x{\rm d}s=0.
\end{aligned}
\end{equation}
From Definition \ref{def1}, we know that $(\tilde v_2,\tilde H_2)$ satisfies
\begin{equation}\label{1-12-3}
\begin{aligned}
&\frac{1}{2}\int_{\mathbb R^3}\big(|\tilde v_2(x,t)|^2+|\tilde H_2(x,t)|^2\big)\,{\rm d}x+\int_{0}^{t}\int_{\mathbb R^3}\big(|\nabla\tilde v_2|^2+|\nabla\tilde H_2|^2\big)\,{\rm d}x{\rm d}s\\
&\leq \int_{0}^{t}\int_{\mathbb R^3}\Big(\big(\tilde v\otimes\tilde v-\tilde H\otimes\tilde H\big):\nabla\tilde v_2+\big(\tilde v\otimes\tilde H-\tilde H\otimes\tilde v\big):\nabla\tilde H_2\Big)\,\crm{{\rm d}x{\rm d}s}.
\end{aligned}
\end{equation}
Combining \eqref{1-12-1}-\eqref{1-12-3}, one has
\begin{equation*}
\begin{aligned}
&\frac{1}{2}\int_{\mathbb R^3}\big(|\omega(x,t)|^2+|\psi(x,t)|^2\big)\,{\rm d}x+\int_{0}^{t}\int_{\mathbb R^3}\big(|\nabla\omega|^2+|\nabla\psi|^2\big)\,{\rm d}x{\rm d}s\\
		&\leq \int_{0}^{t}\int_{\mathbb R^3}\big(\omega\otimes v+v\otimes \omega-\psi\otimes H-H\otimes\psi\big):\nabla \omega\,{\rm d}x{\rm d}s\\
		&\quad+\int_{0}^{t}\int_{\mathbb R^3}\big(\psi\otimes v+H\otimes \omega-\omega\otimes H-v\otimes\psi\big):\nabla \omega\,{\rm d}x{\rm d}s\\
		&\leq 2\int_{0}^{t}\int_{\mathbb R^3}\big(|\omega||v||\nabla \omega|+|\psi||H||\nabla \omega|+|\psi||v||\nabla\psi|+|\omega||H||\nabla\psi|\big)\,{\rm d}x{\rm d}s.
	\end{aligned}
\end{equation*}
By the \crm{H\"{o}lder's} inequality and Young's inequality, we can obtain
\begin{equation}\label{inequality}
	\begin{aligned}
		&\int_{\mathbb R^3}\big(|\omega(x,t)|^2+|\psi(x,t)|^2\big)\,{\rm d}x+\int_{0}^{t}\int_{\mathbb R^3}\big(|\nabla \omega|^2+|\nabla \psi|^2\big)\,{\rm d}x{\rm d}s\\
		&\leq \crm{C}\int_{0}^{t}\int_{\mathbb R^3}(|\omega|^2+|\psi|^2)(|v|^2+|H|^2)\,{\rm d}x{\rm d}s.
	\end{aligned}
\end{equation}
The right-hand side of \eqref{inequality} contains four terms. For the term $\int_{0}^{t}\int_{\mathbb R^3}|\omega|^2|v|^2\,{\rm d}x{\rm d}s$, one has
\begin{align*}
\int_{0}^{t}\int_{\mathbb R^3}|\omega|^2|v|^2\,{\rm d}x{\rm d}s&\leq \crm{C}\int_{0}^{t}\int_{\mathbb R^3}|\omega|^2|v_1|^2\,{\rm d}x{\rm d}s+\crm{C\int_{0}^{t}\int_{\mathbb R^3}|\omega|^2|v_2|^2\,{\rm d}x{\rm d}s}\\
&=\crm{C I_1+C I_2}.
\end{align*}
For $I_1$, we have the following estimate:
\begin{equation*}
	\begin{aligned}
		I_1&\leq \int_{0}^{t}\Big(\int_{\mathbb R^3}|v_1|^5\,{\rm d}x\Big)^{\frac{2}{5}}\Big(\int_{\mathbb R^3}|\omega|^{2\cdot\frac{5}{3}}\,{\rm d}x\Big)^{\frac{3}{5}}\,{\rm d}s\\
&=\int_{0}^{t}\Big(\int_{\mathbb R^3}|v_1|^5\,{\rm d}x\Big)^{\frac{2}{5}}\Big(\int_{\mathbb R^3}|\omega|^{\frac{10}{3}}\,{\rm d}x\Big)^{\frac{3}{5}}\,{\rm d}s\\
		&\leq C\Big(\int_{0}^{t}\int_{\mathbb R^3}|v_1(y,t)|^5\,{\rm d}y\int_{\mathbb R^3}|\omega(x,t)|^2\,{\rm d}x{\rm d}s\Big)^\frac{2}{5}\Big(\int_{0}^{t}\int_{\mathbb R^3}|\nabla \omega|^2 \,{\rm d}x{\rm d}s\Big)^\frac{3}{5}\\
		&\leq C\Big(\int_{0}^{t}g_1(s)\int_{\mathbb R^3}|\omega(x,t)|^2\,{\rm d}x{\rm d}s\Big)^\frac{2}{5}\Big(\int_{0}^{t}\int_{\mathbb R^3}|\nabla \omega|^2\,{\rm d}x{\rm d}s\Big)^\frac{3}{5},\\
&\leq c_1\int_{0}^{t}g_1(s)\int_{\mathbb R^3}|\omega(x,t)|^2\,{\rm d}x{\rm d}s+\epsilon\int_{0}^{t}\int_{\mathbb R^3}|\nabla \omega|^2\,{\rm d}x{\rm d}s
	\end{aligned}
\end{equation*}
where $$g_1(s):=\int_{\mathbb R^3}|v_1(y,s)|^5\,{\rm d}y$$ and $\epsilon$ can be sufficiently small.
For $I_2$, we first use \eqref{miu} and \crm{Remark \ref{remark}} to conclude that there exists $\delta\in (0,T_1)$ such that the following property holds:
\begin{equation*}
\|v_2\|_{L^\infty (0,\delta;L^3(\mathbb R^3))}\leq\|v-v_0\|_{L^\infty (0,\delta;L^3(\mathbb R^3))}+\|v_1-v_0\|_{L^\infty (0,\delta;L^3(\mathbb R^3))}\leq 2\crm{\mu_1}.
\end{equation*}
Thus for $t\leq\delta$, we have
\begin{equation*}
	\begin{aligned}
		I_2&\leq\int_{0}^{t}\|v_2\|^2_{L^3(\mathbb R^3)}\|\omega\|^2_{L^6(\mathbb R^3)}\,{\rm d}s\\
		&\leq \crm{C} \|v_2\|^2_{L^\infty (0,t;L^3(\mathbb R^3))}\int_{0}^{t}\int_{\mathbb R^3}|\nabla \omega|^2\,{\rm d}x{\rm d}s\\
&\leq  4\crm{\mu_1^2 C}\int_{0}^{t}\int_{\mathbb R^3}|\nabla \omega|^2\,{\rm d}x{\rm d}s.
	\end{aligned}
\end{equation*}
Combining the estimate for $I_1$ and $I_2$, we obtain
\begin{equation*}
	\begin{aligned}
		I_1+I_2\leq c_1\int_{0}^{t}g_1(s)\int_{\mathbb R^3}|\omega(x,t)|^2\,{\rm d}x{\rm d}s+c_2\int_{0}^{t}\int_{\mathbb R^3}|\nabla \omega|^2\,{\rm d}x{\rm d}s.
	\end{aligned}
\end{equation*}
For sufficiently small $\epsilon$ and $t$, we can obtain a sufficiently small $c_2$ so that it can be absorbed by the left-hand side of \eqref{inequality}. Besides, the remaining three terms on the right-hand side of \eqref{inequality} can be estimated in the same way, \crm{and we find}
\begin{equation*}
	\begin{aligned}
		&\int_{\mathbb R^3}\big(|\omega(x,t)|^2+|\psi(x,t)|^2\big)\,{\rm d}x+\int_{0}^{t}\int_{\mathbb R^3}\big(|\nabla \omega|^2+|\nabla \psi|^2\big)\,{\rm d}x{\rm d}t\\
		&\leq C\int_{0}^{t}\big(g_1(t)+g_2(t)\big)\int_{\mathbb R^3}\big(|\omega(x,t)|^2+|\psi(x,t)|^2\big)\,{\rm d}x{\rm d}t,
	\end{aligned}
\end{equation*}
for $0<t<\delta,$ where $$g_2(t):=\int_{\mathbb R^3}|H_1(y,t)|^5\,{\rm d}y.$$
Hence, using the Gronwall's inequality, we further obtain $\omega=0,\psi=0$ on the interval $(0,\delta)$. On the other hand, by interpolation inequalities, we know that $(v,H)\in L^5(\delta,T;L^5(\mathbb R^3))$ and thus $(v_2,H_2)\in L^5(\delta,T;L^5(\mathbb R^3))$, then using the same reasoning as above, we obtain $\omega=0,\psi=0$ throughout the whole interval $(0,T)$. The theorem is thus proved.
\end{proof}
\medskip

\section*{Acknowledgments} The research of BL was partially supported by NSFC-$12371202$.

\end{document}